\def\R{\mathbb{R}}
\def\l{\left}
\def\r{\right}
\newcommand{\m}{\mathcal}
\newcommand{\mb}{\mathbb}
\newcommand\argmin{\mathop{\mbox{argmin}}}
\newcommand{\tr}{\mbox{tr}\,}
\newcommand{\var}{\mbox{Var}}
\newcommand{\eps}{\varepsilon}
\newcommand{\mf}[1]{\mathbf{#1}}
\newcommand{\card}{\mathrm{card}}
\newcommand{\med}[1]{\mbox{med}\left(#1\right)}
\newcommand{\medg}[1]{\mbox{med}_g\left(#1\right)}
\newcommand{\pr}[1]{\mathbb{P}{\left(#1\right)}}
\newcommand{\dotp}[2]{\left\langle#1,#2\right\rangle}
\newtheorem{assumption}{Assumption}
\newtheorem{lemma}{Lemma}
\newtheorem{theorem}{Theorem}
\newtheorem{remark}{Remark}
\newtheorem{fact}{Fact}
\newtheorem{corollary}{Corollary}
\title[Distributed Statistical Estimation]{Distributed Statistical Estimation and Rates of Convergence in Normal Approximation}
\author[S. Minsker and N. Strawn]{Stanislav Minsker \footnote{S. Minsker was partially supported by the National Science Foundation grant DMS-1712956.}}
\address{ }
\email{minsker@usc.edu}
\author[S. Minsker and N. Strawn]{Nate Strawn}
\address{ }
\email{nate.strawn@georgetown.edu}
\begin{document}

\begin{abstract}
This paper presents a class of new algorithms for distributed statistical estimation that exploit divide-and-conquer approach.
We show that one of the key benefits of the divide-and-conquer strategy is robustness, an important characteristic for large distributed systems. 
We establish connections between performance of these distributed algorithms and the rates of convergence in normal approximation, and prove non-asymptotic deviations guarantees, as well as limit theorems, for the resulting estimators. 
Our techniques are illustrated through several examples: in particular, we obtain new results for the median-of-means estimator, as well as provide performance guarantees for distributed maximum likelihood estimation. 

\end{abstract}

\section{Introduction.}
\label{sec:intro}

According to \citep{IBM}, ``Every day, we create 2.5 quintillion bytes of data Ñ so much that 90\% of the data in the world today has been created in the last two years alone. This data comes from everywhere: sensors used to gather climate information, posts to social media sites, digital pictures and videos.. to name a few. This data is big data''. 
Novel scalable and robust algorithms are required to successfully address the challenges posed by big data problems. 
This paper develops and analyzes techniques that exhibit \emph{scalability}, a necessary characteristic of modern methods designed to perform statistical analysis of large datasets, as well as \emph{robustness} that guarantees stable performance of distributed systems when some of the nodes exhibit abnormal behavior.

The computational power of a single computer is often insufficient to store and process modern data sets, and instead data is stored and analyzed in a distributed way by a cluster consisting of several machines. 
We consider a distributed estimation framework wherein data is assumed to be randomly assigned to computational nodes that produce intermediate results. 
We assume that no communication between the nodes is allowed at this first stage. 
On the second stage, these intermediate results are used to compute some statistic on the whole dataset; see figure \ref{fig:dnc} for a graphical illustration.
\begin{figure}[h]
  \centering
  \includegraphics[scale=0.09]{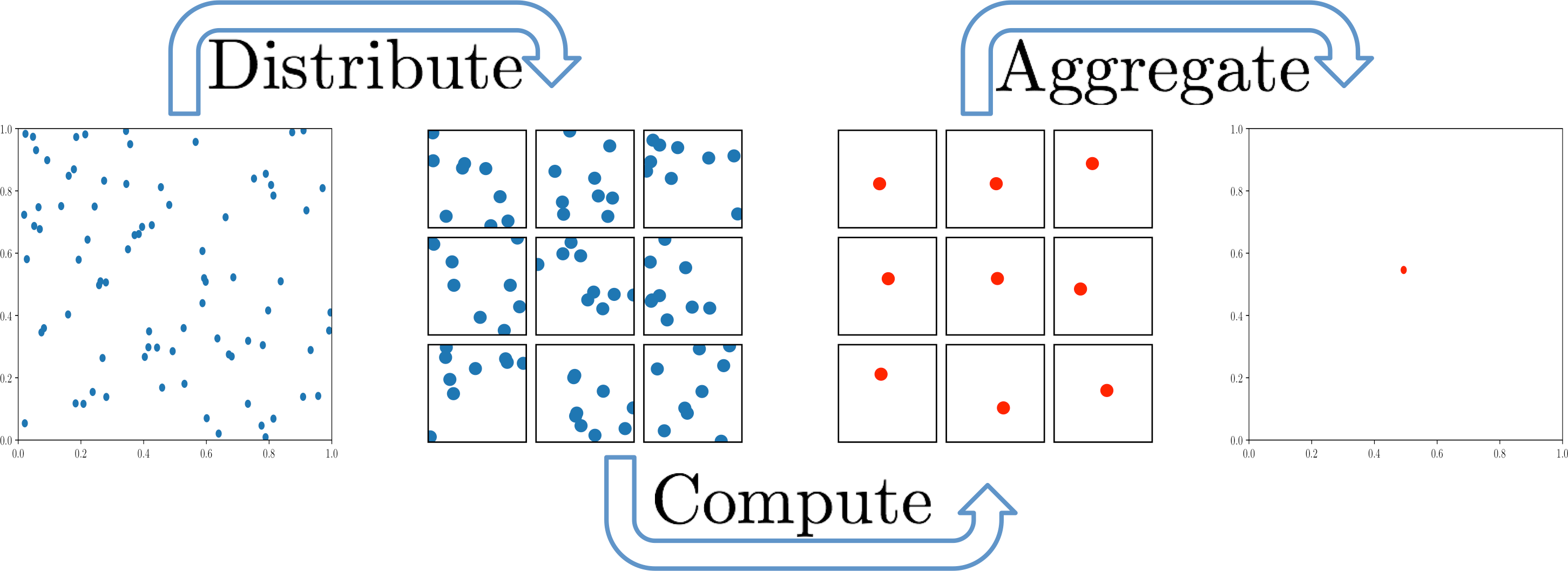}
\caption{Distributed estimation protocol where data is randomly distributed across nodes to obtain ``local'' estimates that are aggregated to compute a ``global'' estimate.}
\label{fig:dnc}
\end{figure}
Often, such a distributed setting is unavoidable in applications, whence interactions between subsamples stored on different machines are inevitably lost.
Most previous research focused on the following question: how significantly does this loss affect the quality of statistical estimation when compared to an ``oracle'' that has access to the whole sample? 
The question that we ask in this paper is different: what can be gained from randomly splitting the data across several subsamples? What are the statistical advantages of the divide-and-conquer framework? 
Our work indicates that one of the key benefits of an appropriate merging strategy is robustness. 
In particular, the quality of estimation attained by the distributed estimation algorithm is preserved even if a subset of machines stops working properly. 
At the same time, the resulting estimators admit tight probabilistic guarantees (expressed in the form of exponential concentration inequalities) even when the distribution of the data has heavy tails -- a viable model of real-world samples contaminated by outliers. 

We establish connections between a class of randomized divide-and-conquer strategies and the rates of convergence in normal approximation. 
Using these connections, we provide a new analysis of the ``median-of-means'' estimator which often yields significant improvements over the previously available results. 
We further illustrate the implications of our results by constructing novel algorithms for distributed Maximum Likelihood Estimation that admit strong performance guarantees under weak assumptions on the underlying distribution. 

\subsection{Background and related work.}
\label{sec:background}

We begin by introducing a simple model for distributed statistical estimation. 
Let $X_1,\ldots,X_N$ be a sequence of independent random variables with values in a measurable space $(S, \m S)$ that represent the data available to a statistician. 
We will assume that $N$ is large, and that that the sample $\m X=(X_1,\ldots,X_N)$ is partitioned into $k$ disjoint subsets $G_1,\ldots,G_k$ of cardinalities $n_j:=\card(G_j)$ respectively, where the partitioning scheme is independent of the data. 
Let $P_j$ be the distribution of $X_j$, $j=1,\ldots,N$. 
The goal is to estimate an unknown parameter 
$\theta_\ast=\theta_\ast(P_j), \ j=1,\ldots,N$ shared by $P_1,\ldots,P_N$ and taking values in a separable Hilbert space $(\mb H,\|\cdot\|_\mb H)$; for example, if $S=\mb H$, $\theta_\ast$ could be the common mean of $X_1,\ldots,X_N$. 
Distributed estimation protocol proceeds via performing ``local'' computations on each subset $G_j, \ j\leq k$, and the local estimators $\bar\theta_j:=\bar\theta_j(G_j), \ j\leq k$ are then pieced together to produce the final ``global'' estimator 
$\hat\theta^{(k)}= \hat\theta^{(k)}(\bar\theta_1,\ldots,\bar\theta_k)$. 
We are interested in the statistical properties of such distributed estimation protocols, and our main focus is on the final step that combines the local estimators. 
Let us mention that the condition requiring the sets $G_j, \ 1\leq j\leq k$ to be disjoint can be relaxed; we discuss the extensions related to U-quantiles in section \ref{sec:u-stat} below. 

The problem of distributed and communication~-~efficient statistical estimation has recently received significant attention from the research community. 
While our review provides only a subsample of the abundant literature in this field, it is important to acknowledge the works by \citet{mcdonald2009efficient,zhang2012communication,fan2014challenges,battey2015distributed,duchi2014optimality,shafieezadeh2015distributionally,lee2015communication,cheng2015computational,rosenblatt2016optimality,zinkevich2010parallelized}. 
\citet{li2016simple,scott2016bayes,shang2015bayesian,minsker2014robust} have investigated closely related problems for distributed Bayesian inference. 
Applications to important algorithms such as Principal Component Analysis were investigated in \citep{fan2017distributed,liang2014improved}, among others.  
\citet{jordan2013statistics}, author provides an overview of recent trends in the intersection of the statistics and computer science communities, describes popular existing strategies such as the ``bag of little bootstraps'', as wells as successful applications of the divide-and-conquer paradigm to problems such as matrix factorization. 

The majority of the aforementioned works propose \emph{averaging} of local estimators as a final merging step. 
Indeed, averaging reduces variance, hence, if the bias of each local estimator is sufficiently small, their average often attains optimal rates of convergence to the unknown parameter $\theta_\ast$. 
For example, when $\theta_\ast(P)=\mb E_P X$ is the mean of $X$ and $\bar\theta_j$ is the sample mean evaluated over the subsample $G_j, \ j=1,\ldots,k$, then the average of local estimators $\tilde\theta= \frac{1}{k}\sum_{j=1}^k \bar\theta_j$ is just a empirical mean evaluated over the whole sample. 
More generally, it has been shown by \citet{battey2015distributed,zhang2013divide} that in many problems (for instance, linear regression), $k$ can be taken as large as $O(\sqrt{N})$ without negatively affecting the estimation rates; similar guarantees hold for a variety of M-estimators \citep[see][]{rosenblatt2016optimality}. 
However, if the number of nodes $k$ itself is large (the case we are mainly interested in), then the averaging scheme has a drawback: if one or more among the local estimators $\bar\theta_j$'s is anomalous (for example, due to data corruption or a computer system malfunctioning), then statistical properties of the average will be negatively affected as well. 
For large distributed systems, this drawback can be costly. 

One way to address this issue is to replace averaging by a more robust procedure, such as the median or a robust M-estimator; this approach is investigated in the present work. 
In the univariate case ($\theta_\ast\in \mb R)$, the merging strategies we study can be described as solutions of the optimization problem
\begin{align}
\label{eq:huber1}
\widehat\theta^{(k)} = \argmin_{z\in \mb R} \sum_{j=1}^k \rho\l( |\bar\theta_j - z| \r)
\end{align}
for an appropriately defined convex function $\rho$; we investigate this class of estimators in detail.  
A natural extension to the case $\theta_\ast\in \mb R^m$ is to consider 
\[
\widehat\theta^{(k)} = \argmin_{y\in \mb R^m} \sum_{j=1}^k \rho\l(  \l\| \bar\theta_j - y \r\|_\circ\r)
\] 
for some convex function $\rho$ and norm $\|\cdot\|_\circ$. For example, if $\rho(x)=x$, then $\widehat\theta^{(k)}$ becomes the spatial (also known as geometric or Haldane's) median \citep{haldane1948note,small1990survey} of $\bar\theta_1,\ldots,\bar\theta_k$. Since the median remains stable as long as at least a half of the nodes in the system perform as expected, such model for distributed estimation is robust.
The merging approach based on the various notions of the multivariate median has been previously considered by \citet{minsker2015geometric} and \citet{hsu2016loss}; here, we analyze the setting when $\rho(x)=x$ and $\|\cdot\|_\circ$ is the $L_1$-norm using the novel approach.

Existing results for the median-based merging strategies have several pitfalls related to the deviation rates, and in most cases known guarantees are suboptimal. 
In particular, these guarantees suggest that estimators obtained via the median-based approach are very sensitive to the choice of $k$, the number of partitions. 
For instance, consider the problem of univariate mean estimation, where $X_1,\ldots,X_N$ are i.i.d. copies of $X\in \mb R$, and $\theta_\ast = \mb E X$ is the expectation of $X$. 
Assume that $\card(G_j)\geq n:=\lfloor N/k \rfloor$ for all $j$, let $\bar\theta_j = \frac{1}{|G_j|}\sum_{i: X_i\in G_j} X_i$ be the empirical mean evaluated over the subsample $G_j$, and define the ``median-of-means'' estimator via
\begin{align}
\label{eq:med-of-means}
&\widehat\theta^{(k)}
=\med{\bar\theta_1,\ldots,\bar\theta_k},
\end{align}
where $\med{\cdot}$ is the usual univariate median. 
This estimator has been introduced by \citet{Nemirovski1983Problem-complex00} in the context of stochastic optimization, and later appeared in \citep{jerrum1986random} and \citep{alon1996space}. 
If $\var(X)=\sigma^2<\infty,$ it has been shown \citep[for example, by][]{lerasle2011robust} that the median-of-means estimator 
$\widehat\theta^{(k)}$ satisfies
\begin{align}
& \label{eq:med0}
\l| \widehat\theta^{(k)} - \theta_\ast \r|\leq 2\sigma\sqrt{6e} \sqrt{\frac{k}{N}}
\end{align}
with probability $\geq 1-e^{-k}$. 
However, this bound, while being the current state of the art, does not tell us what happens at the confidence levels other than $1-e^{-k}$. 
For example, if $k=\lfloor \sqrt{N} \rfloor$, the only conclusion we can make is that $\l| \widehat\theta^{(k)} - \theta_\ast \r| \lesssim N^{-1/4}$ with high probability, which is far from the optimal rate $N^{-1/2}$. 
And if we want the bound to hold with confidence 99\% instead of $1 - e^{-\sqrt N}$, then, according to \eqref{eq:med0}, we should take $k=\lfloor \log 100 \rfloor +1 = 5$, in which case the beneficial effect of parallel computation is very limited. 
The natural question to ask is the following: is the median-based merging step indeed suboptimal for large values of $k$ (e.g., $k=\lfloor \sqrt{N} \rfloor$), or is the problem related to the suboptimality of existing bounds? 
We claim that in many situations the latter is the case, and that previously known results can be strengthened: for instance, the statement of Corollary \ref{corollary:med-of-means1} below implies that whenever $\mb E| X - \theta_\ast |^3<\infty$, the median-of-means estimator satisfies 
\[
| \widehat\theta^{(k)} - \theta_\ast | \leq
3\sigma \l(\frac{\mb E \l| X-\theta_\ast \r|^3}{\sigma^3}\frac{k}{N-k} + \sqrt{\frac{s}{N - k}} \r)
\]
with probability $\geq 1 - 4 e^{-2s}$, for \emph{all} $s \lesssim k$. 
In particular, this inequality shows that the estimator \eqref{eq:med-of-means} has ``typical'' deviations of order $N^{-1/2}$ whenever $k=O(\sqrt N)$, hence the ``statistical cost'' of employing a large number of computational nodes is minor. Moreover, we will prove that $\sqrt{N}\l( \widehat \theta^{(k)} - \theta_\ast \r) \xrightarrow{d} N\l(0, \frac{\pi}{2}\sigma^2 \r)$ if $k\to\infty$ and $k=o(\sqrt{N})$ as $N\to \infty$. 
It will also be demonstrated that improved bounds hold in other important scenarios, such as maximum likelihood estimation, even when the subgroups have different sizes and the observations are not identically distributed. 

\subsection{Organization of the paper. }

Section \ref{sec:notation} describes notation used throughout the paper. 
Sections \ref{ssec:univar} and \ref{ssec:multivar} present main results and examples for the cases of univariate and multivariate parameter respectively. 
Outcomes of numerical simulation are discussed in section \ref{ssec:simul}, and proofs of the main results are contained in section \ref{sec:proofs}.

\subsection{Notation.}
\label{sec:notation}

Everywhere below, $\|\cdot\|_1$ and $\|\cdot\|_2$ stand for the $L_1$ and $L_2$ norms of a vector, and $\|\cdot\|$ - for the operator norm of a matrix (its largest singular value). 

Given a probability measure $P$, $\mb E_P(\cdot)$ will stand for the expectation with respect to $P$, and we will write $\mb E(\cdot)$ when $P$ is clear from the context. 
Convergence in distribution will be denoted by $\xrightarrow{d}$. 

For two sequences $\l\{ a_j\r\}_{j\geq 1}\subset \mb R$ and $\l\{ b_j \r\}_{j\geq 1}\subset \mb R$ for $j\in\mb N$, the expression 
$a_j\lesssim b_j$ means that there exists a constant $c>0$ such that $a_j\leq c b_j$ for all $j\in \mb N$. 
Absolute constants will be denoted $c,C,c_1$, etc., and may take different values in different parts of the paper. 
For a function $f:\R^d\mapsto\R$, we define 
\[
\argmin_{z\in\R^d} f(z) = \{z\in\R^d: f(z)\leq f(x)\text{ for all }x\in\R^d\},
\]
and $\|f\|_\infty:=\mathrm{ess \,sup}\{ |f(x)|: \, x\in \R^d\}$. 
Finally, $f_+(x) =  \lim_{t\searrow 0} \frac{f(x+t) - f(x)}{t}$ and $f_-(x) = \lim_{t\nearrow 0} \frac{f(x+t) - f(x)}{t}$ will denote the right and left derivatives of $f$ respectively (whenever these limits exist).  
Additional notation and auxiliary results are introduced on demand for the proofs in section \ref{sec:proofs}.

\subsection{Main results.}	
\label{sec:main}

As we have argued above, existing guarantees for the estimator \eqref{eq:med-of-means} are sensitive to the choice of $k$, the number of partitions. 
In the following sections, we demonstrate that these bounds are often suboptimal, and show that large values of $k$ often do not have a significant negative effect on the statistical performance of resulting algorithms. 
 
The key observation underlying the subsequent exposition is the following: assume that the ``local estimators'' $\bar \theta_j, \ 1\leq j\leq k$, are asymptotically normal with asymptotic mean equal to $\theta_\ast$. 
In particular, distributions of $\bar\theta_j$'s are approximately symmetric, with $\theta_\ast$ being the center of symmetry. 
The location parameters of symmetric distributions admits many robust estimators of the form \eqref{eq:huber1}, the sample median being a notable example. 

This intuition allows us to establish a parallel between the non-asymptotic deviation guarantees for distributed estimation procedures of the form \eqref{eq:huber1} and the degree of symmetry of ``local'' estimators quantified by the rates of convergence to normal approximation. 
Results for the univariate case are presented in section \ref{ssec:univar}, and extensions to the multivariate case are presented in section \ref{ssec:multivar}.

\section{The univariate case.}			
\label{ssec:univar}	

We assume that $X_1,\ldots,X_N$ is a collection of independent (but not necessarily identically distributed) $S$-valued random variables with distributions $P_1,\ldots,P_N$ respectively. 
The data are partitioned into disjoint groups $G_1,\ldots,G_k$ of cardinality $n_j: = \card(G_j)$ each, and such that $\sum_{j=1}^k n_j = N$. 
Let $\bar\theta_j:=\bar\theta_j(G_j), \ 1\leq j\leq k$ be a sequence of independent estimators of the parameter $\theta_\ast\in \mb R$ shared by $P_1,\ldots,P_N$. 
Our main assumption will be that $\bar\theta_1,\ldots,\bar\theta_k$ are asymptotically normal as quantified by the following condition. 
\begin{assumption}
\label{ass:1}
Let $\Phi(t)$ be the cumulative distribution function of the standard normal random variable $Z\sim N(0,1)$. 
For each $j=1,\ldots,k$, there exist a sequence $\{\sigma^{(j)}_n\}_{n\in \mb N}\subset \mb R_+$ such that 
\[
g_j(n_j):=\sup_{t\in \mb R}\l| \pr{ \frac{\bar\theta_j - \theta_\ast}{\sigma^{(j)}_{n_j}}\leq t} - \Phi( t ) \r| \to 0 \text{ as }n_j\to\infty.
\]
\end{assumption}
Clearly, functions $g_j(n_j)$, control the \emph{rate of convergence} of estimators $\bar\theta_1,\ldots,\bar\theta_k$ to the normal law. 
Furthermore, let 
\[
H_k:=\l( \frac{1}{k}\sum_{j=1}^k \frac{1}{\sigma_{n_j}^{(j)}} \r)^{-1}
\] 
be the \emph{harmonic mean} of $\sigma_{n_j}^{(j)}$'s, and set 
$\alpha_j=\frac{ H_k }{\sigma_{n_j}^{(j)}}$. 
Note that $\sum_{j=1}^k \alpha_j = k$, and that $\alpha_1=\ldots=\alpha_k=1$ if $\sigma_{n_1}^{(1)}=\ldots=\sigma_{n_k}^{(k)}$. 
 
\subsection{Merging procedure based on the median.}
\label{ssec:median}

In this subsection, we establish guarantees for the merging procedure based on the sample median, namely, 
\begin{align*}
&
\widehat\theta^{(k)}=\med{\bar\theta_1,\ldots,\bar\theta_k}.
\end{align*}
This case is treated separately due to its practical importance, the fact that we can obtain better numerical constants, and a conceptually simpler proof. 
\begin{theorem}
\label{th:main1}
Assume that $s>0$ and $n_j = \card(G_j), \ j=1,\ldots, k$ are such that 
\begin{align}
\label{eq:00}
&
\frac{1}{k}\sum_{i=1}^k \l( g_i(n_i) + \sqrt{\frac{s}{k}} \r) \cdot \max_{j=1,\ldots,k} \alpha_j < \frac{1}{2}.
\end{align} 
Moreover, let assumption \ref{ass:1} be satisfied, and let $\zeta_j(n_j,s)$ solve the equation 
\[
\Phi \l( \zeta_j(n_j,s)/\sigma_{n_j}^{(j)} \r) - \frac{1}{2}=\alpha_j \cdot \frac{1}{k}\sum_{i=1}^k \l( g_i(n_i) + \sqrt{\frac{s}{k}} \r).
\] 
Then for all $s$ satisfying \eqref{eq:00},
\[
\l| \widehat\theta^{(k)} - \theta_\ast \r| \leq \zeta(s):=\max_{j=1,\ldots,k} \zeta_j(n_j,s)
\]
with probability at least $1 - 4e^{-2s}$.
\end{theorem}

\begin{proof}
See section \ref{sec:proof1}.
\end{proof}
The following lemma yields a more explicit form of the bound and numerical constants.
\begin{lemma}
\label{cor:0}
Assume that $\frac{1}{k}\sum_{i=1}^k \l( g_i(n_i) + \sqrt{\frac{s}{k}} \r)\cdot \max\limits_{j=1,\ldots,k} \alpha_j \leq 0.33$. Then 
\begin{align*}
&
\zeta(s)\leq 3H_k \cdot \frac{1}{k}\sum_{j=1}^k \l( g_j(n_j) + \sqrt{\frac{s}{k}} \r).
\end{align*}
\end{lemma}
\begin{proof}
See section \ref{proof:cor0}.
\end{proof}
\begin{remark}
Let $\bar\sigma^{(1)}\leq \ldots \leq \bar\sigma^{(k)}$ be the non-decreasing rearrangement of $\sigma_{n_1}^{(1)},\ldots,\sigma_{n_k}^{(k)}$. 
It is easy to see that the harmonic mean $H_k$ of $\sigma_{n_1}^{(1)},\ldots,\sigma_{n_k}^{(k)}$ satisfies 
\[
H_k \leq \frac{k}{\lfloor k/m \rfloor} \cdot \frac{1}{\lfloor k/m \rfloor}\sum_{j=1}^{\lfloor k/m \rfloor} \bar\sigma^{(j)}
\]
for any integer $1\leq m\leq k$, hence, informally speaking, the deviations of $\widehat \theta^{(k)}$ are controlled by the smallest $\sigma_{n_j}^{(j)}$'s rather than the largest. 
\end{remark}
 
\subsection{Example: new bounds for the median-of-means estimator.}
\label{ssec:med-of-means}

The univariate mean estimation problem is pervasive in statistics, and serves as a building block of more advanced methods such as empirical risk minimization. 
Early works on robust mean estimation include Tukey's ``trimmed mean'' \citep{tukey1946sampling}, as well as ``winsorized mean''  \citep{bickel1965some}; also see discussion in \citep{bubeck2013bandits}. 
These techniques often produce estimators with significant bias. 
A different approach based on M-estimation was suggested by O. Catoni \citep{catoni2012challenging}; Catoni's estimator yields almost optimal constants, however, its construction requires additional information about the variance or the kurtosis of the underlying distribution; moreover, its computation is not easily parallelizable, therefore this technique cannot be easily employed in the distributed setting. 

Here, we will focus on a fruitful idea that is commonly referred to as the ``median-of-means'' estimator that was formally defined in equation \eqref{eq:med-of-means} above. 
Several refinements and extensions of this estimator to higher dimensions have been recently introduced by \citet{minsker2015geometric,Hsu2013Loss-minimizati00,devroye2016sub,joly2016estimation,lugosi2017sub}. 
Advantages of this method include the facts that that it can be implemented in parallel and does not require prior knowledge of any information about parameters of the distribution (e.g., its variance). 
The following result for the median-of-means estimator is the corollary of Theorem \ref{th:main1}; for brevity, we treat only the i.i.d. case. Recall that $n=\lfloor N/k \rfloor$ and $\card(G_j)\geq n, \ j=1,\ldots,k$.
 
\begin{corollary}
\label{corollary:med-of-means1}
Let $X_1,\ldots,X_N$ be a sequence of i.i.d. copies of a random variable $X\in \mb R$ such that $\mb EX = \theta_\ast$, $\var(X)=\sigma^2$, $\mb E| X-\theta_\ast |^3<\infty$, and set $c_n = 0.4748\frac{\mb E|X - \theta_\ast|^3}{\sigma^3 \sqrt{n}}$. 
Then for all $s>0$ such that $c_n + \sqrt{\frac{s}{k}}\leq 0.33$, the estimator $\widehat \theta^{(k)}$ defined in \eqref{eq:med-of-means} satisfies 
\[
| \widehat\theta^{(k)} - \theta_\ast | \leq
\sigma \l( 1.43 \frac{\mb E \l| X-\theta_\ast \r|^3/\sigma^3}{n} + 3\sqrt{\frac{s}{kn}} \r)
\]
with probability at least $1 - 4 e^{-2s}$.
\end{corollary}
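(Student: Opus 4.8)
The plan is to obtain the corollary as a direct specialization of Theorem \ref{th:main1} to the sample-mean setting, where the abstract objects $\sigma_n$ and $g(n)$ of Assumption \ref{ass:1} acquire explicit forms. First I would identify the local estimators: each $\bar\theta_j = \frac{1}{n}\sum_{i:\,X_i\in G_j} X_i$ is an average of $n$ i.i.d.\ copies of $X$, so that $\mb E\bar\theta_j = \theta_\ast$ and $\var(\bar\theta_j)=\sigma^2/n$; since the groups are disjoint and of equal size $n$, the $\bar\theta_j$ are i.i.d., as Theorem \ref{th:main1} requires. This pins down the scaling sequence as $\sigma_n=\sigma/\sqrt n$ and reduces Assumption \ref{ass:1} to a statement about the rate of normal approximation for a standardized sum of $n$ i.i.d.\ summands.

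The only substantive analytic input is a bound on $g(n)=\sup_t\l|\pr{\frac{\bar\theta_1-\theta_\ast}{\sigma_n}\le t}-\Phi(t)\r|$, which is precisely the Kolmogorov distance between the law of the standardized sample mean and $N(0,1)$. By the Berry--Esseen theorem with the sharp uniform constant $0.4748$, this distance is at most $0.4748\,\mb E|X-\theta_\ast|^3/(\sigma^3\sqrt n)=c_n$. Hence $g(n)\le c_n$, and since $c_n=O(1/\sqrt n)\to 0$, Assumption \ref{ass:1} indeed holds.

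Next I would check that the hypotheses line up. Because $g(n)\le c_n$, the assumed inequality $c_n+\sqrt{s/k}\le 0.33$ forces $g(n)+\sqrt{s/k}\le 0.33<\tfrac12$, so both condition \eqref{eq:00} of Theorem \ref{th:main1} and the hypothesis of Lemma \ref{cor:0} are met. Applying Theorem \ref{th:main1} gives $|\widehat\theta^{(k)}-\theta_\ast|\le \sigma_n\,\zeta(n,s)$ on an event of probability at least $1-4e^{-2s}$, and Lemma \ref{cor:0} yields $\zeta(n,s)<3\l(g(n)+\sqrt{s/k}\r)\le 3\l(c_n+\sqrt{s/k}\r)$. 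I note that there is no need to re-solve the defining equation for $\zeta$ with $c_n$ in place of $g(n)$: the final inequality of Lemma \ref{cor:0} combined with the bound $g(n)\le c_n$ already closes the argument.

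Finally I would substitute $\sigma_n=\sigma/\sqrt n$ and the definition of $c_n$ and collect terms. The leading contribution is $\frac{\sigma}{\sqrt n}\cdot 3c_n = 3\cdot 0.4748\,\mb E|X-\theta_\ast|^3/(\sigma^2 n)$, and since $3\cdot 0.4748 = 1.4244\le 1.43$ this is at most $1.43\,\sigma\,\mb E|X-\theta_\ast|^3/(\sigma^3 n)$; the fluctuation contribution is $\frac{\sigma}{\sqrt n}\cdot 3\sqrt{s/k}=3\sigma\sqrt{s/(kn)}$, producing exactly the asserted bound. The argument is essentially mechanical once Theorem \ref{th:main1} is in hand; the one place demanding care is invoking the optimal Berry--Esseen constant, since it is this constant that propagates into the final $1.43$, and verifying that the chain $g(n)\le c_n\le 0.33-\sqrt{s/k}$ keeps all hypotheses valid simultaneously.
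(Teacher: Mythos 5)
Your proposal is correct and follows essentially the same route as the paper's own proof: verify Assumption \ref{ass:1} via the Berry--Esseen bound with $\sigma_n=\sigma/\sqrt{n}$ and the constant $0.4748$, then combine Theorem \ref{th:main1} with Lemma \ref{cor:0} and substitute. Your extra care in tracking $g(n)\leq c_n$ (rather than identifying the two, as the paper loosely does) and in checking $3\cdot 0.4748 = 1.4244 \leq 1.43$ is a minor refinement of the same argument, not a different approach.
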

\begin{remark}
The term $1.43\sigma \frac{\mb E \l| X-\theta_\ast \r|^3/\sigma^3}{n}$ can be thought of as the ``bias'' due to asymmetry of the distribution of the sample mean. Note that whenever $k\lesssim\sqrt{N}$ (so that $n\gtrsim \sqrt{N}$), the right-hand side of the inequality above is of order $(kn)^{-1/2}\simeq N^{-1/2}$. 
\end{remark}
\begin{proof}
It follows from the Berry-Esseen Theorem (fact \ref{th:BE} in section \ref{sec:prelim}) that assumption \ref{ass:1} is satisfied with $\sigma_n^{(1)}=\ldots=\sigma_n^{(k)}=\frac{\sigma}{\sqrt{n}}$, and
\[
g_j(n)\leq c_n = 0.4748\frac{\mb E|X - \theta_\ast|^3}{\sigma^3 \sqrt{n}}
\]
for all $j$. Lemma \ref{cor:0} implies that $\max_j\zeta_j(n,s) \leq 3\frac{\sigma}{\sqrt{n}}\l( c_n + \sqrt{s/k}\r)$, and the claim follows from Theorem \ref{th:main1}. 	
\end{proof}
For distributions with infinite third moment, the rate of convergence in the Berry-Esseen type bound is slower, and the following result holds instead. 
\begin{corollary}
\label{corollary:med-of-means1.5}
Let $X_1,\ldots,X_N$ be a sequence of i.i.d. copies of a random variable $X\in \mb R$ such that $\mb EX = \theta_\ast$, $\var(X)=\sigma^2$, $\mb E| X - \theta_\ast |^{2+\delta}<\infty$ for some $\delta\in(0,1]$. 
Then there exist absolute constants $c_1,c_2>0$ such that for all $s>0$ and $k$ satisfying 
$\frac{\mb E |X -\theta_\ast |^{2+\delta}}{\sigma^{2+\delta}n^{\delta/2}} + \sqrt{\frac{s}{k}}\leq c_1$, the following inequality holds with probability at least $1 - 4 e^{-2s}$:
\[
| \widehat\theta^{(k)} - \theta_\ast | \leq
c_2\sigma \l( \frac{\mb E \l| X-\theta_\ast \r|^{2+\delta}/\sigma^{2+\delta}}{n^{\frac{1+\delta}{2}}} + \sqrt{\frac{s}{N}} \r).
\]
\end{corollary}
In this case, typical deviations of $\widehat \theta^{(k)}$ are still of order $N^{-1/2}$ as long as $k\lesssim N^{\delta/(1+\delta)}$. The proof of this result follows from fact \ref{th:BE-gen} in section \ref{sec:prelim} in the same way as Corollary \ref{corollary:med-of-means1} was deduced from the Berry-Esseen bound.
 
\subsection{Example: distributed maximum likelihood estimation.}
\label{ssec:mle}

Let $X_1,\ldots,X_N$ be i.i.d. copies of a random vector $X\in \mb R^d$ with distribution $P_{\theta_\ast}$, where $\theta_\ast\in \Theta\subseteq \mb R$. 
Assume that for each $\theta\in \Theta$, $P_\theta$ is absolutely continuous with respect to a $\sigma$-finite measure $\mu$, and let $p_\theta=\frac{dP_\theta}{d\mu}$ be the corresponding density. 
In this section, we state sufficient conditions for assumption \ref{ass:1} to be satisfied when $\bar\theta_1,\ldots,\bar\theta_k$ are the maximum likelihood estimators \citep{Vaart1998Asymptotic-stat00} of $\theta_\ast$. 
Conditions stated below were obtained by \citet{pinelis2016optimal}. 
All derivatives below (denoted by $'$) are taken with respect to $\theta$, unless noted otherwise.

Assume that the the log-likelihood function $\ell_x(\theta)= \log p_\theta(x)$ satisfies the following:
\begin{enumerate}[(1)]
\item $[\theta_\ast - \delta, \theta_\ast + \delta] \subseteq \Theta$ for some $\delta>0$;
\item ``standard regularity conditions'' that allow differentiation under the expectation: assume that $\mb E\ell'_X(\theta_\ast)=0$, and that the Fisher information $\mb E\ell'_X(\theta_\ast)^2=-\mb E \ell''_X(\theta_\ast):=I(\theta_\ast)$ is finite;
\item $\mb E \l| \ell'_X(\theta_\ast) \r|^3 + \mb E \l| \ell''_X(\theta_\ast) \r|^3 <\infty$;
\item for $\mu$-almost all $x$, $\ell_x(\theta)$ is three times differentiable for $\theta\in[\theta_\ast-\delta,\theta_\ast+\delta]$, and 
$\mb E \sup_{|\theta-\theta_\ast|\leq \delta} \l| \ell_X'''(\theta) \r|^3<\infty$; 
\item $\pr{|\bar\theta_1 - \theta_\ast|\geq \delta}\leq c \gamma^n$ for some positive constants $c$ and $\gamma\in[0,1)$.
\end{enumerate}
In turn, condition (5) above is implied by the following two inequalities \citep[see][section 6.2, for detailed discussion and examples]{pinelis2016optimal}:
\begin{enumerate}
\item $H^2(\theta,\theta_\ast) \geq 2 - \frac{2}{\l( 1+ c_0(\theta - \theta_\ast)^2 \r)^\gamma}$, 
where $H(\theta_1,\theta_2)=\sqrt{\int_{\mb R^d} \l( \sqrt{p_{\theta_1}} - \sqrt{p_{\theta_2}}\r)^2 d\mu}$ is the Hellinger distance, and $c_0,\gamma$ are positive constants;
\item $I(\theta)\leq c_1 + c_2\l| \theta\r|^\alpha$ for some positive constants $c_1,c_2$ and $\alpha$ and all $\theta\in \Theta$. 
\end{enumerate}
\begin{corollary}
Assume that conditions (1)-(5) are satisfied, and that $\card(G_j)\geq n = \lfloor N/k \rfloor, \ j=1,\ldots,k$. 
Then for all $s>0$ such that $\frac{\mathfrak C}{\sqrt{n}} + c\gamma^n + \sqrt{\frac{s}{k}}\leq 0.33$,  
\[
\l| \widehat \theta^{(k)} - \theta_\ast \r|\leq 
\frac{3}{\sqrt{I(\theta_\ast)}}\l( \frac{\mathfrak C}{n} + \frac{c}{\sqrt{n}}\gamma^n + \sqrt{\frac{s}{kn}} \r)
\]
with probability at least $1- 4 e^{-2s}$, where $\mathfrak C$ is a positive constant that depends only on $\{P_\theta\}_{\theta\in[\theta_\ast-\delta,\theta_\ast+\delta]}$.
\end{corollary}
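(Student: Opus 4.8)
The plan is to follow exactly the route used for Corollary \ref{corollary:med-of-means1}: verify that Assumption \ref{ass:1} holds for the maximum likelihood estimator $\bar\theta_1$ with an explicit rate $g(n)$, and then combine Lemma \ref{cor:0} with Theorem \ref{th:main1}. The only ingredient that differs from the median-of-means case is the normal-approximation bound for the MLE, which plays the role that the Berry--Esseen theorem for sample means played before.

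First I would fix the centering and scaling. Under conditions (1)--(4) the MLE is asymptotically normal with asymptotic variance $1/I(\theta_\ast)$, so I take $\sigma_n = 1/\sqrt{n\,I(\theta_\ast)}$. The substance of the argument is the nonasymptotic bound of \citet{pinelis2016optimal}: under conditions (1)--(5),
\[
g(n) = \sup_{t\in\mb R}\l|\pr{\frac{\bar\theta_1 - \theta_\ast}{\sigma_n}\leq t} - \Phi(t)\r| \leq \frac{\mathfrak C}{\sqrt{n}} + c\gamma^n,
\]
where $\mathfrak C$ depends only on $I(\theta_\ast)$ and the third-moment quantities appearing in conditions (3)--(4), all of which are finite by assumption. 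This is precisely the statement that Assumption \ref{ass:1} is satisfied with this choice of $\sigma_n$ and $g(n)$.

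Granting this, the remainder is immediate. The hypothesis $\frac{\mathfrak C}{\sqrt{n}} + c\gamma^n + \sqrt{s/k}\leq 0.33$ is exactly $g(n) + \sqrt{s/k}\leq 0.33$, so Lemma \ref{cor:0} gives $\zeta(n,s)\leq 3\l(\frac{\mathfrak C}{\sqrt{n}} + c\gamma^n + \sqrt{s/k}\r)$, and Theorem \ref{th:main1} yields $\l|\widehat\theta^{(k)} - \theta_\ast\r|\leq \sigma_n\,\zeta(n,s)$ with probability at least $1 - 4e^{-2s}$. Multiplying the bound on $\zeta(n,s)$ by $\sigma_n = 1/\sqrt{n\,I(\theta_\ast)}$ and distributing the three terms reproduces the claimed inequality term by term, which completes the argument.

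The main obstacle is entirely contained in the Berry--Esseen bound for the MLE borrowed from \citet{pinelis2016optimal}; once it is available, the corollary is a mechanical specialization of Theorem \ref{th:main1}, exactly parallel to Corollary \ref{corollary:med-of-means1}. For completeness I would recall why conditions (1)--(5) deliver that rate: one expands the score equation $\sum_i \ell'_{X_i}(\bar\theta_1) = 0$ around $\theta_\ast$, so that the leading term is a normalized sum of the i.i.d. scores $\ell'_{X_i}(\theta_\ast)$ governed by a classical Berry--Esseen bound through the third moment in (3), while the curvature and remainder terms are controlled by (2) and by the uniform bound $\mb E\sup_{|\theta-\theta_\ast|\leq\delta}|\ell'''_X(\theta)|^3<\infty$ in (4). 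This expansion is only licensed on the event $\{|\bar\theta_1 - \theta_\ast|\leq\delta\}$, and condition (5) is precisely what controls the complementary event and produces the additive $c\gamma^n$ contribution.
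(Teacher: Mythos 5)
Your proposal is correct and follows essentially the same route as the paper: both verify Assumption \ref{ass:1} via the nonasymptotic normal-approximation bound for the MLE from \citet{pinelis2016optimal} with $\sigma_n=(nI(\theta_\ast))^{-1/2}$ and $g(n)=\frac{\mathfrak C}{\sqrt{n}}+c\gamma^n$, then apply Lemma \ref{cor:0} and Theorem \ref{th:main1}. The concluding sketch of why conditions (1)--(5) yield Pinelis's rate is extra background the paper handles purely by citation, but it does not change the argument.
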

\begin{proof}
It follows from results in \citep{pinelis2016optimal}, in particular equation (5.5), that whenever conditions (1)-(5) hold, assumption \ref{ass:1} is satisfied for all $j$ with $\sigma_n^{(j)}=\l( n I(\theta_\ast) \r)^{-1/2}$, where $I(\theta_\ast)$ is the Fisher information, and $g_j(n) \leq \frac{\mathfrak C}{\sqrt{n}} + c\gamma^n$, where $\mathfrak C$ is a constant that depends only on $\{P_\theta\}_{\theta\in[\theta_\ast-\delta,\theta_\ast+\delta]}$. 
Lemma \ref{cor:0} implies that 
\[
\max_{j=1,\ldots,k} \zeta_j(n,s) \leq 3\l( \frac{\mathfrak C}{\sqrt{n}} +c\gamma^n + \sqrt{s/k}\r),
\] 
and the claim follows from Theorem \ref{th:main1}. 
\end{proof}
\begin{remark}
Results of this section can be extended to include other M-estimators besides MLEs, as \cite{bentkus1997berry} have shown that M-estimators satisfy a variant of Berry-Esseen bound under rather general conditions. 
\end{remark}

\subsection{Merging procedures based on robust M-estimators.}
\label{ssec:m-estimators}

In this subsection, we study the family of merging procedures based on the M-estimators
\begin{align}
\label{eq:M-est}
\widehat\theta^{(k)}_\rho := \argmin_{z\in \mb R}\sum_{j=1}^k \rho\l(z - \bar\theta_j\r).
\end{align}
The sample median $\med{\bar\theta_1,\ldots,\bar\theta_k}$ corresponds to the choice of (non-smooth) $\rho(x)=|x|$ and was treated separately above; here, it will be assumed that $\rho$ is convex, even, differentiable function such that $\rho(z)\to\infty$ as $|z|\to\infty$ and $\|\rho' \|_\infty<\infty$. 
A particular example of such a function is Huber's loss 
\begin{align}
\label{eq:huber}
\rho_M(z) = \begin{cases}
z^2/2, & |z|\leq M, \\
M |z|-M^2/2 , & |z|>M,
\end{cases}
\end{align}
where $M$ is a positive constant. 
The following result quantifies non-asymptotic performance of the estimator $\widehat\theta^{(k)}_\rho$. As before, we set
\begin{align}
\label{eq:alpha}
H_k &= \frac{1}{1/k\sum_{i=1}^k 1/\sigma_{n_j}^{(i)}} \text{ and }
\alpha_j =\frac{ H_k }{\sigma_{n_j}^{(j)}},
\end{align}
where $\sigma_n^{(j)}$'s are defined in assumption \ref{ass:1}. 
Moreover, given the loss $\rho$ as above, let $C_\rho>0$ be such that $|\rho'(x)|\geq \frac{\|\rho'\|_\infty}{2}$ for $|x| > C_\rho$. 
\begin{theorem}
\label{th:M-est}
Let assumption \ref{ass:1} be satisfied, and suppose that $s>0$ and $n_1,\ldots,n_k$ are such that 
\begin{align}
\label{eq:M-est-hypo}
&
\max_{j=1,\ldots,k} \alpha_j \,e^{\l( C_\rho/\sigma_{n_j}^{(j)}\r)^2} \frac{1}{k}\sum_{i=1}^k\l( \sqrt{\frac{s}{k}} + 2 g_i(n_i) \r) \leq 0.33.
\end{align} 
Then for all $s$ satisfying \eqref{eq:M-est-hypo},
\begin{align}
\label{eq:th2-bound}
&
\l| \widehat\theta^{(k)}_\rho - \theta_\ast \r| \leq 3 H_k\max_{j=1,\ldots,k} e^{\l( C_\rho/\sigma_{n_j}^{(j)}\r)^2} \cdot 
\frac{1}{k}\sum_{i=1}^k\l( \sqrt{\frac{s}{k}} + 2 g_i(n_i) \r)
\end{align}
with probability at least $1 - 4e^{-2s}$.
\end{theorem} 
\begin{proof}
See section \ref{sec:proof-M}. 
\end{proof}
Note that the bound depends on $\rho$ only through $\max_{j=1,\ldots,k} e^{\l( C_\rho/\sigma_{n_j}^{(j)}\r)^2}$. 
Assume for concreteness that $n_1=\ldots=n_k=\lfloor N/k \rfloor$, and that $\rho=\rho_M$ is Huber's loss defined in \eqref{eq:huber}, so that $C_\rho = M/2$.  
For $\max_{j=1,\ldots,k} e^{\l( C_\rho/\sigma_{n_j}^{(j)}\r)^2}$ to be bounded above by an absolute constant, one should choose 
$M$ to be of order $\min_{j=1,\ldots,k}\sigma_{n_j}^{(j)}$. 
While the latter quantity is typically unknown, it can be estimated in some cases. 
For example, if the data are i.i.d. then $\sigma_{n_j}^{(j)} = \sqrt{\var\l(\bar \theta_1 \r)}$ for all $j$. 
Since $\bar\theta_j$'s are approximately normal, their standard deviation can be estimated by the \emph{median absolute deviation} as 
\[
\widehat{\sigma}_{n_1} = \frac{1}{\Phi^{-1}(0.75)}\med{|\bar\theta_1 - \med{\bar\theta_1,\ldots,\bar\theta_k}|,\ldots,|\bar\theta_k - \med{\bar\theta_1,\ldots,\bar\theta_k}| },
\]
where the factor $1/\Phi^{-1}(0.75)$ is introduced to make the estimator consistent \citep{hampel2011robust}; another possibility is to use bootstrap \citep{ghosh1984note}.


\subsection{Asymptotic results.}
\label{ssec:asymptotic}

In this section, we complement the previously discussed non-asymptotic deviation bounds for $\widehat\theta^{(k)}_\rho$ by the asymptotic results. For the benefits of clarity, we state the complete list of assumptions made below:
\begin{enumerate}
\item $X_1,\ldots,X_N$ are i.i.d., $n=\lfloor N/k \rfloor$ and $\card(G_j) = n, \ j=1,\ldots,k$; result for non-identically distributed data is presented in Appendix \ref{appendix:clt}. 
\item Assumption \ref{ass:1} is satisfied for some function $g(n)$ (note that there is no dependence on index $j$ due to the i.i.d. assumption);
\item $k$ and $n$ are such that $k\to\infty$ and $\sqrt{k}\cdot g(n)\to 0$ as $N\to \infty$;
\item $\rho$ is a convex, even function, such that $\rho(z)\to\infty$ as $|z|\to\infty$ and $\|\rho' \|_\infty<\infty$ (here, $\rho'(x)$ is defined as the average of the right and left derivatives of $\rho$ at $x$).
\item $\widehat\theta^{(k)}_\rho$ is defined as 
\[
\widehat\theta^{(k)}_\rho := \argmin_{z\in \mb R}\sum_{j=1}^k \rho\l(\frac{z - \bar\theta_j}{\sigma_n}\r),
\]
where $\sigma_n^{(1)}=\ldots = \sigma_n^{(k)}\equiv \sigma_n$ is a normalizing sequence from assumption \ref{ass:1} (our  definition of the estimator is slightly different than in section \ref{ssec:m-estimators} which allows to keep $\rho$ fixed as $k$ and $n$ are changing). 
\end{enumerate}
For $z\in \mb R$, define 
\[
L(z):= \mb E \rho'\l(z+Z\r),
\] 
where $Z\sim N(0,1)$. Note that, since $\rho$ is differentiable almost everywhere, $L(z) = \mb E \rho'_-(z+Z) = \mb E \rho'_+(z+Z)$. 
\begin{theorem}
\label{th:clt}
Under assumptions (a)-(e) above, 
\[
\sqrt{k}\,\frac{\widehat\theta^{(k)}_\rho - \theta_\ast}{\sigma_n}\xrightarrow{d} N(0,\Delta^2),
\]
where $\Delta^2 = \frac{\mb E\l( \rho'(Z)\r)^2}{\l( L'(0) \r)^2}$.
\end{theorem}
\begin{proof}
See section \ref{sec:proofclt}.
\end{proof}
For example, if $\rho(x)=|x|$, Theorem \ref{th:clt} implies that under appropriate assumptions, the median-of-means estimator $\widehat \theta^{(k)}$ defined in \eqref{eq:med-of-means} satisfies 
\[
\sqrt{N}\l( \widehat \theta^{(k)} - \theta_\ast \r) \xrightarrow{d} N\l(0, \frac{\pi}{2}\sigma^2 \r).
\]
Indeed, in this case $\sigma_n = \sigma/\sqrt{n}$, where $\sigma^2 = \var(X_1)$, and 
\[
\rho'(x)=\begin{cases}
-1, & x<0, \\
0, & x=0, \\
1, & x>0,
\end{cases}
\]
hence a simple calculation yields $\Delta^2 = 1/(L'(0))^2 = \pi/2$. 

If we consider the mean estimation problem with Huber's loss $\rho_M(x)$ \eqref{eq:huber} instead of $\rho(x)=|x|$, we similarly deduce that 
\[
\rho'(x)=\begin{cases}
-M & x\leq -M, \\
x, & |x|<M, \\
M, & x\geq M,
\end{cases}
\]
and we get the well-known \citep{huber1964robust} expression $\Delta^2 = \frac{\int_{-M}^M x^2 d\Phi(x) + 2M^2(1 - \Phi(M))}{\l( 2\Phi(M) -1 \r)^2}$; in particular, $\Delta^2 \to 1$ as $M\to \infty$, and the convergence is fast. For instance, $\Delta^2 \simeq 1.15$ for $M=2$ and $\Delta^2 \simeq 1.01$ for $M=3$. 

\begin{remark}
The key assumptions in the list (a)-(e) governing the regime of growth of $k$ and $n$ are (b) and (c). For instance, if the random variables possess finite moments of order $(2+\delta)$ for some $\delta\in(0,1]$, then it follows from fact \ref{th:BE-gen} in section \ref{sec:prelim} that $\sqrt{k}\,g(n)\to 0 \text{ if } k=o\l(N^{\frac{\delta}{1+\delta}} \r)$ as $N\to\infty$. 
\end{remark}

\subsection{Connections to U-quantiles.}
\label{sec:u-stat}

In this section, we discuss connections of proposed algorithms to U-quantiles and the assumption requiring the groups $G_1,\ldots,G_k$ to be disjoint. 
We assume that the data $X_1,\ldots,X_N$ are i.i.d. with common distribution $P$, and let $\theta_\ast=\theta_\ast(P)\in \mb R$ be a real-valued parameter of interest. 
It is clear that the estimators produced by distributed algorithms considered above depend on the random partition of the sample. 
A natural way to avoid such dependence is to consider the U-quantile (in this case, the median) 
\begin{align*}
&
\widetilde \theta^{(k)} = \med{\bar\theta_J, \ J\in \m A_N^{(n)}},
\end{align*}
where $\m A_N^{(n)}:=\l\{  J: \ J\subseteq \{1,\ldots,N\}, \card(J)=n:=\lfloor N/k\rfloor \r\}$ is a collection of all distinct subsets of 
$\{1,\ldots,N\}$ of cardinality $n$, and $\bar\theta_J:=\bar\theta(X_j, \ j\in J)$ is an estimator of $\theta_\ast$ based on 
$\{X_j, \ j\in J\}$. 
For instance, when $\card(J)=2$ and $\bar\theta_J = \frac{1}{\card(J)}\sum_{j\in J} \frac{X_j}{2}$, $\widetilde \theta^{(k)}$ is the well-known Hodges-Lehmann estimator of the location parameter, see \citep{hodges1963estimates,lehmann2006nonparametrics}; for a comprehensive study of U-quantiles, see \citep{arcones1996bahadur}. 
The main result of this section is an analogue of Theorem \ref{th:main1} for the estimator $\widetilde \theta^{(k)}$; it implies that theoretical guarantees for the performance of $\widetilde \theta^{(k)}$ are at least as good as for the estimator 
$\widehat \theta^{(k)}$. 
Since the data are i.i.d., it is enough to impose the assumption \ref{ass:1} on $\bar\theta\l( X_1,\ldots,X_n \r)$ only, hence we drop the index $j$ and denote the normalizing sequence $\{\sigma_n\}_{n\in \mb N}$ and the corresponding error function $g(n)$. 
\begin{theorem}
\label{th:main1a}
Assume that $s>0$ and $n=\lfloor N/k \rfloor$ are such that 
\begin{align}
\label{eq:02}
&
g(n)+\sqrt{\frac{s}{k}}<\frac{1}{2}.
\end{align} 
Moreover, let assumption \ref{ass:1} be satisfied, and let $\zeta(n,s)$ solve the equation 
\[
\Phi \l( \zeta(n,s) \r) = \frac{1}{2} + g(n)+\sqrt{\frac{s}{k}}.
\] 
Then for any $s$ satisfying \eqref{eq:02},
\[
\l| \widetilde \theta^{(k)} - \theta_\ast \r| \leq \sigma_n \zeta(n,s)
\]
with probability at least $1 - 4e^{-2s}$.
\end{theorem}
\begin{proof}
See section \ref{sec:proof1a}. As before, a more explicit form of the bound immediately follows from Lemma \ref{cor:0}. 
\end{proof}
A drawback of the estimator $\widetilde \theta^{(k)}$ is the fact that its exact computation requires evaluation of $n\choose N$ estimators $\bar \theta_J$ over subsamples $\l\{ \{X_j, \ j\in J\}, \ J\in \m A_N^{(n)} \r\}$. 
For large $N$ and $n$, such task becomes intractable. 
However, an approximate result can be obtained by choosing $\ell$ subsets $J_1,\ldots,J_\ell$ from $\m A_N^{(n)}$ uniformly at random, and setting $\widetilde \theta_\ell^{(k)}:=\med{\bar\theta_{J_1},\ldots,\bar\theta_{J_\ell}}$. 
Typically, the error $\l| \widetilde \theta_\ell^{(k)} - \widetilde \theta^{(k)}\r|$ is of order $\ell^{-1/2}$ with high probability over the random draw of $J_1,\ldots,J_\ell$. 

We note that Theorem \ref{th:M-est} admits a similar extension for the estimator defined as 
\[
\widetilde \theta^{(k)}_\rho :=\argmin_{z\in \mb R} \sum_{J \in \m A_N^{(n)}} \rho\l( z - \bar \theta_J\r). 
\]
Namely, if the data are i.i.d., then under the assumptions of section \ref{ssec:m-estimators},
\begin{align}
\label{eq:03}
&
\l| \widetilde \theta^{(k)}_\rho - \theta_\ast \r| \leq 3 e^{\l( C_\rho/\sigma_n\r)^2} \cdot \sigma_n \l( \sqrt{\frac{s}{k}} + 2 g(n) \r)
\end{align}
with probability at least $1 - 4e^{-2s}$, whenever $s>0$ and $n=\lfloor N/k \rfloor$ are such that 
\[
e^{\l( C_\rho/\sigma_n\r)^2}\l( \sqrt{\frac{s}{k}} + 2 g(n) \r) \leq 0.33.
\]
We omit the proof of \eqref{eq:03} since the required modifications in the argument of Theorem \ref{th:M-est} are exactly the same as those explained in the proof of Theorem \ref{th:main1a}.

\section{Estimation in higher dimensions.}
\label{ssec:multivar}

In this section, it will be assumed that $\theta_\ast \in \mb R^m, \ m\geq 2$, is a vector-valued parameter of interest. 
Let $X_1,\ldots,X_N$ be independent $S$-valued random variables that are randomly partitioned into disjoint groups $G_1,\ldots,G_k$ of cardinality $n = \lfloor N/k \rfloor$ each. 
Let $\bar\theta_j:=\bar\theta_j(G_j)\in \mb R^m, \ 1\leq j\leq k$ be a sequence of estimators of $\theta_\ast$, the common parameter of the distributions of $X_j$'s.   
Assume that $\rho_1,\ldots,\rho_m$ are convex, even functions such that $\rho_i(z)\to\infty$ as $|z|\to\infty$ and $\|\rho_i' \|_\infty<\infty$, with $\rho_i'(x)$ defined as the average of the right and left derivatives of $\rho_i$, $i=1,\ldots,m$, and let 
\begin{align}
\label{eq:mvar-med}
\widehat\theta^{(k)} := \argmin_{z\in \mb R^m} \sum_{j=1}^k \sum_{i=1}^m \rho_i\l(  z_i - \bar\theta_{j,i}  \r), 
\end{align}
where $z=(z_1,\ldots,z_m)$ and $\bar\theta_j = (\bar\theta_{j,1},\ldots,\bar\theta_{j,m})$ for $1\leq j\leq k$.

For the sake of clarity, we will assume below that $X_1,\ldots,X_N$ are i.i.d. However, results can be easily extended to the case of non-identically distributed data in a manner described in section \ref{ssec:m-estimators}. 
Assumption \ref{ass:1} will be required to hold coordinatewise, namely, we will assume that there exist sequences 
$\{\sigma_{n,i}\}_{n\in \mb N}\subset \mb R_+, \ i=1,\ldots,m$, such that 
\[
g_m(n):=\max_{i=1,\ldots,m}\sup_{t\in \mb R}\l| \pr{ \frac{\bar\theta_{1,i} - \theta_\ast}{\sigma_{n,i}}\leq t} - \Phi( t ) \r| \to 0 \text{ as } n\to\infty.
\]
Note that the maximum over the second index $j$ disappears due to the i.i.d. assumption. 
\begin{theorem}
\label{th:mvar}
Let $C_{\rho_i}>0$ be such that $|\rho_{+,i}'(x)|\geq \frac{\|\rho_{+,i}'\|_\infty}{2}$ and $|\rho_{-,i}'(x)|\geq \frac{\|\rho_{-,i}'\|_\infty}{2}$ for $|x| > C_{\rho_i}, \ i=1,\ldots,m$. 
Let assumption \ref{ass:1} hold for each coordinate of $\bar\theta_{1}$, and suppose that $s>0$ and $n=\lfloor N/k \rfloor$ are such that 
\begin{align}
\label{eq:mvar-bound}
&
\max_{i=1,\ldots,m } e^{\l( C_{\rho_i}/\sigma_{n,i}\r)^2}\l( \sqrt{\frac{s}{k}} + 2 g_m(n) \r) \leq 0.33.
\end{align} 
Then for all $s$ satisfying \eqref{eq:mvar-bound} and all $1\leq i\leq m$ simultaneously,
\begin{align}
\label{eq:th5-bound}
&
\l| \widehat\theta_i^{(k)} - \theta_{\ast,i} \r| \leq 3 e^{\l( C_{\rho_i}/\sigma_{n,i}\r)^2} \cdot\sigma_{n,i}\l( \sqrt{\frac{s}{k}} + 2 g_m(n) \r)
\end{align}
with probability at least $1 - 4me^{-2s}$.
\end{theorem}
\begin{proof}
See section \ref{sec:proof-mvar}. 
\end{proof}

\subsection{Example: multivariate median-of-means estimator.}
\label{ssec:mmom}

Consider the special case of Theorem \ref{th:mvar} when $\theta_\ast=\mb E X$ is the mean of $X\in \mb R^m$, 
$\bar\theta_j(X):=\frac{1}{|G_j|}\sum_{X_i\in G_j} X_i$ is the sample mean evaluated over the subsample $G_j$, and $\rho_i(x) = |x|$ for all $i$. 
In this case, $\widehat\theta^{(k)}$ becomes the spatial median with respect to the $L_1$-norm, namely,
\begin{align}
\label{eq:med-L1}
&
\widehat\theta^{(k)} := \argmin_{z\in \mb R^m} \sum_{j=1}^k \l\|  z - \bar\theta_j \r\|_1.
\end{align}
The problem of finding the mean estimator that admits sub-Gaussian concentration around $\mb EX$ under weak moment assumptions on the underlying distribution has recently been investigated in several works. 
For instance, \citet{joly2016estimation} construct an estimator that admits ``almost optimal'' behavior under the assumption that the entries of $X$ possess 4 moments. 
Recently, \citet{lugosi2017sub,lugosi2018near} proposed new estimators that attains optimal bounds and requires existence of only 2 moments. 
More specifically, the aforementioned papers show that, for any $s$ such that $\frac{2}{N}<e^{-s}<1$, there exists an estimator $\hat\theta_{(s)}$ such that with probability at least $1 - C_1e^{-s}$, 
\[
\l\| \hat\theta_{(s)} - \theta_\ast \r\|_2\leq C_2\l( \sqrt{\frac{\tr( \Sigma)}{N}} + \sqrt{\frac{s\,\lambda_{\mathrm{max}}(\Sigma)}{N}}\r),
\]
where $C_1,C_2>0$ are numerical constants, $\Sigma$ is the covariance matrix of $X$, $\tr(\Sigma)$ is its trace and $\lambda_{\mathrm{max}}(\Sigma)$ - its largest eigenvalue. 
However, construction of these estimators explicitly depends on the desired  confidence level $s$, and (more importantly) they are numerically difficult to compute.  

On the other hand, Theorem \ref{th:mvar} demonstrates that performance of the multivariate median-of-means estimator is robust with respect to the choice of the number of subgroups $k$, and the resulting deviation bounds hold simultaneously over the range of confidence parameter $s$ whenever the coordinates of $X$ possess $2+\delta$ moments for some $\delta>0$. The following corollary summarizes these claims.
\begin{corollary}
\label{corollary:med-of-means2}
Let $X_1,\ldots,X_N$ be i.i.d. random vectors such that $\theta_\ast = \mb EX_1$ is the unknown mean, 
$\Sigma = \mb E\l[(X_1-\theta_\ast)(X_1-\theta_\ast)^T\r]$ is the covariance matrix, $\sigma_i^2 = \Sigma_{i,i}$, and $\max_{i=1,\ldots,m}\mb E | X_{1,i} |^{2+\delta}<\infty$ for some $\delta\in (0,1]$. 
Then there exist absolute constants $c_1,c_2>0$ such that for all $s>0$ and $k$ satisfying
\[
\sqrt{\frac{s}{k}} + \max_{i=1,\ldots,m}\frac{\mb E | X_{1,i} - \theta_{\ast,i}|^3}{\sigma_i^3 \sqrt{n}}\leq c_1,
\] 
with probability at least $1 - 4me^{-2s}$ for all $i=1,\ldots,m$ simultaneously,
\begin{align*}
&
\l| \widehat\theta_i^{(k)} - \theta_{\ast,i} \r| \leq c_2 \,\sigma_{i}\l( \max_{i=1,\ldots,m} \frac{\mb E | X_{1,i} - \theta_{\ast,i}|^{2+\delta}/\sigma_i^{2+\delta}}{n^{\frac{1+\delta}{2}}} + \sqrt{\frac{s}{N}}  \r).
\end{align*}
\end{corollary}
\begin{proof}
It follows from fact \ref{th:BE-gen} in section \ref{sec:prelim} that $g_m(n)$ can be bounded as 
\[
g_m(n) \leq A\max_{i=1,\ldots,m} \frac{\mb E|X_{1,i} - \theta_{\ast,i} |^{2+\delta}}{\sigma_i^{2+\delta}n^{\delta/2}}
\] 
for an absolute constant $A>0$. Moreover, it is easy to see that $C_{\rho_i} = 0$ for all $i$ and that assumption \ref{ass:1} holds with $\sigma_{n,i}=\frac{\sigma_i}{\sqrt{n}}$. Now the claim immediately follows from Theorem \ref{th:mvar}.
\end{proof}

\begin{remark}
Estimator \eqref{eq:med-L1} admits a natural generalization of the form
\begin{align}
\label{eq:geom-med}
\widehat\theta_{\rho,\|\cdot\|_\circ}^{(k)} := \argmin_{z\in \mb R^m} \sum_{j=1}^k \rho\l(\l\| z - \bar\theta_j \r\|_\circ \r), 
\end{align}
where $\|\cdot\|_\circ$ is a norm in $\mb R^m$ and $\rho$ is a convex, non-decreasing function. 
For example, if $\|\cdot\|_\circ$ is the Euclidean norm, resulting estimator is invariant with respect to the orthogonal transformations. However, available performance guarantees for this estimator hold under stronger assumptions (such as joint asymptotic normality of the coordinates of $\bar\theta_j$'s instead of coordinate-wise asymptotic normality), and exhibit suboptimal dependence on the dimension; these results, along with the discussion of relevant numerical methods, are presented in Appendix \ref{sec:l2-median}. 
Complete characterization of the effect of the norm $\|\cdot\|_\circ$ on the geometry of the problem and performance of the corresponding estimator \eqref{eq:geom-med} warrants further study. 
\end{remark}

\section{Simulation results.}
\label{ssec:simul}

We illustrate results of the previous sections with numerical simulations that compare performance of the median-of-means estimator with the usual sample mean, see figure \ref{fig:comparison} below. 
\begin{figure}[h]
    \centering
    \subfloat[]{
       \includegraphics[width=0.45\textwidth]{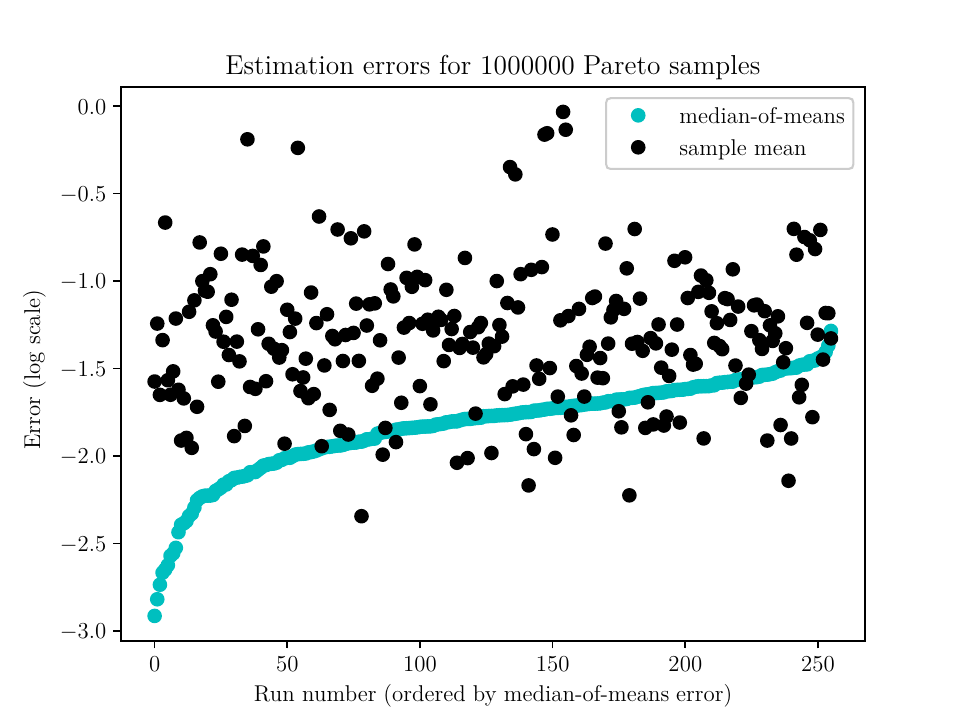}
    }%
    \subfloat[]{
        \includegraphics[width=0.45\textwidth]{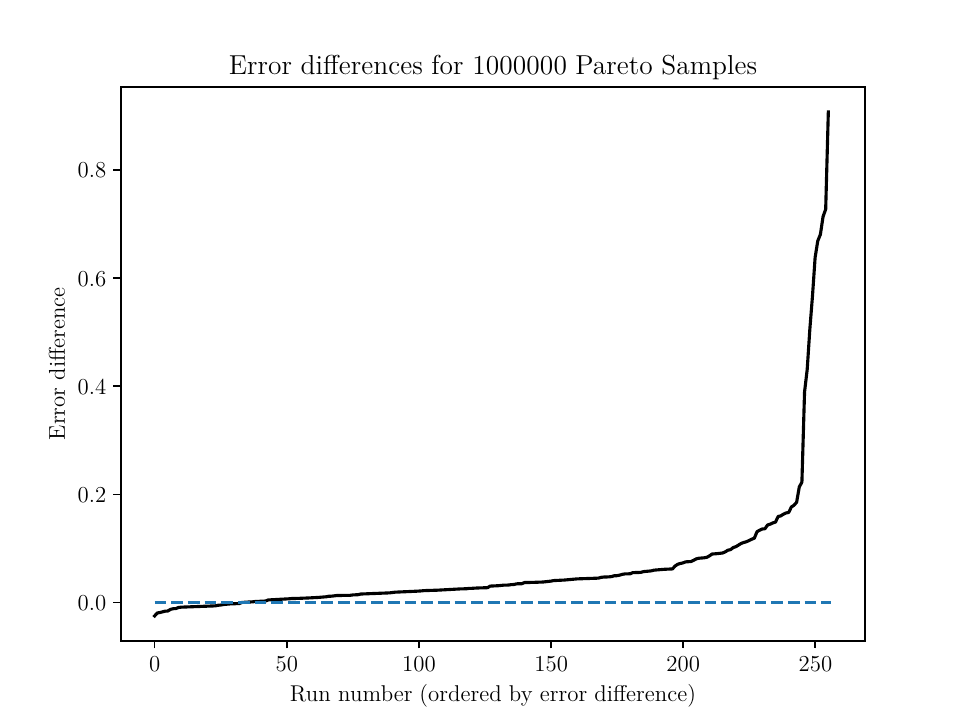}
        }  \\
          \subfloat[]{
        \includegraphics[width=0.45\textwidth]{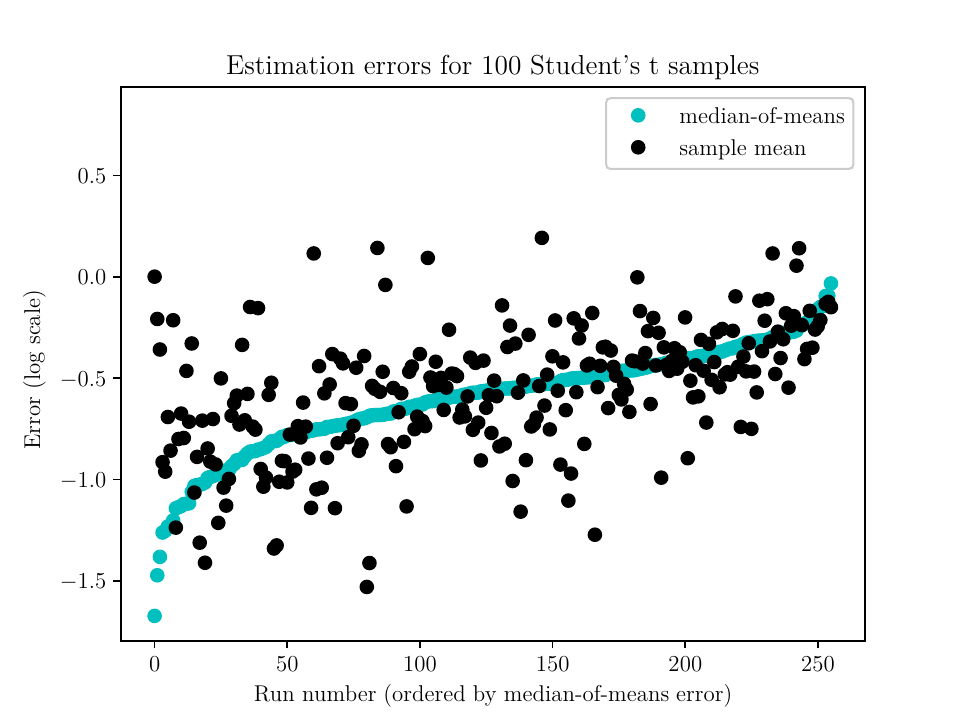}
    }
    \subfloat[]{
        \includegraphics[width=0.45\textwidth]{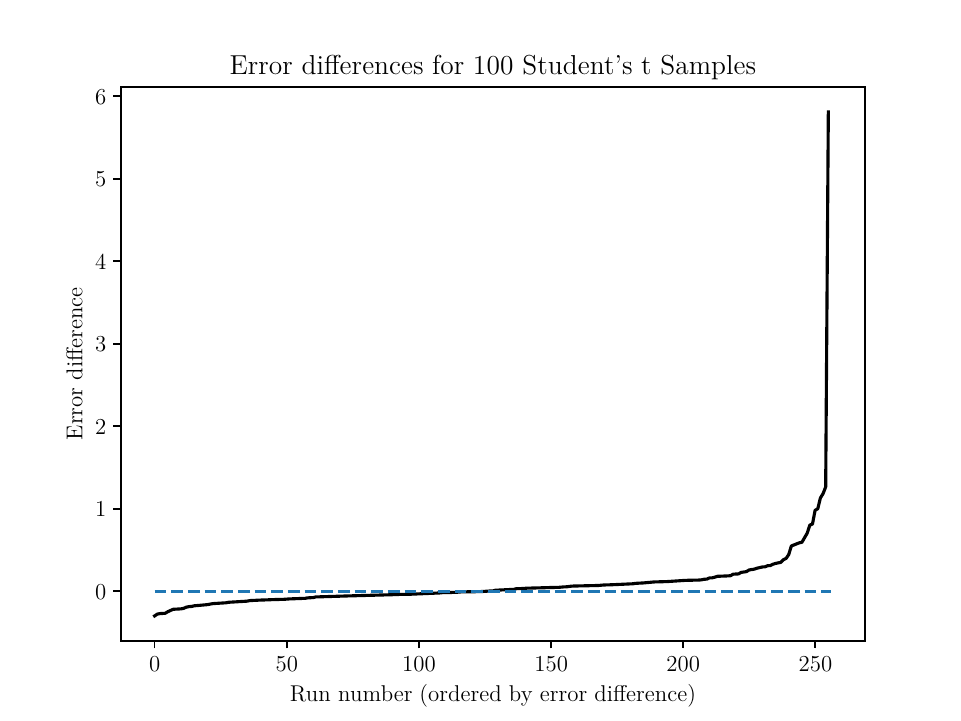}
        }
        \caption{Comparison of errors corresponding to the median-of-means and sample mean estimator over 256 runs of the experiment. In (a) the sample of size $N=10^6$ consists of i.i.d. random vectors in $\mb R^2$ with independent Pareto-distributed entries possessing only $2.1$ moments. Each run computes the median-of-means estimator using partition into $k=1000$ groups, as well as the usual sample mean. In (b), the ordered differences between the error of the sample mean and the median-of-means over all 256 runs illustrates robustness. Positive error differences in (b) indicate lower error for the median-of-means, and negative error differences occur when the sample mean provided a better estimate. \\
 Images (c) and (d) illustrate a similar experiment that was performed for two-dimensional random vectors with independent entries with Student's t-distribution with 2 degrees of freedom. In this case, the sample size is $N=100$ and the number of groups is $k=10$.
 }
    \label{fig:comparison}
\end{figure}
\begin{figure}[ht]
    \centering
         \includegraphics[width=0.55\textwidth]{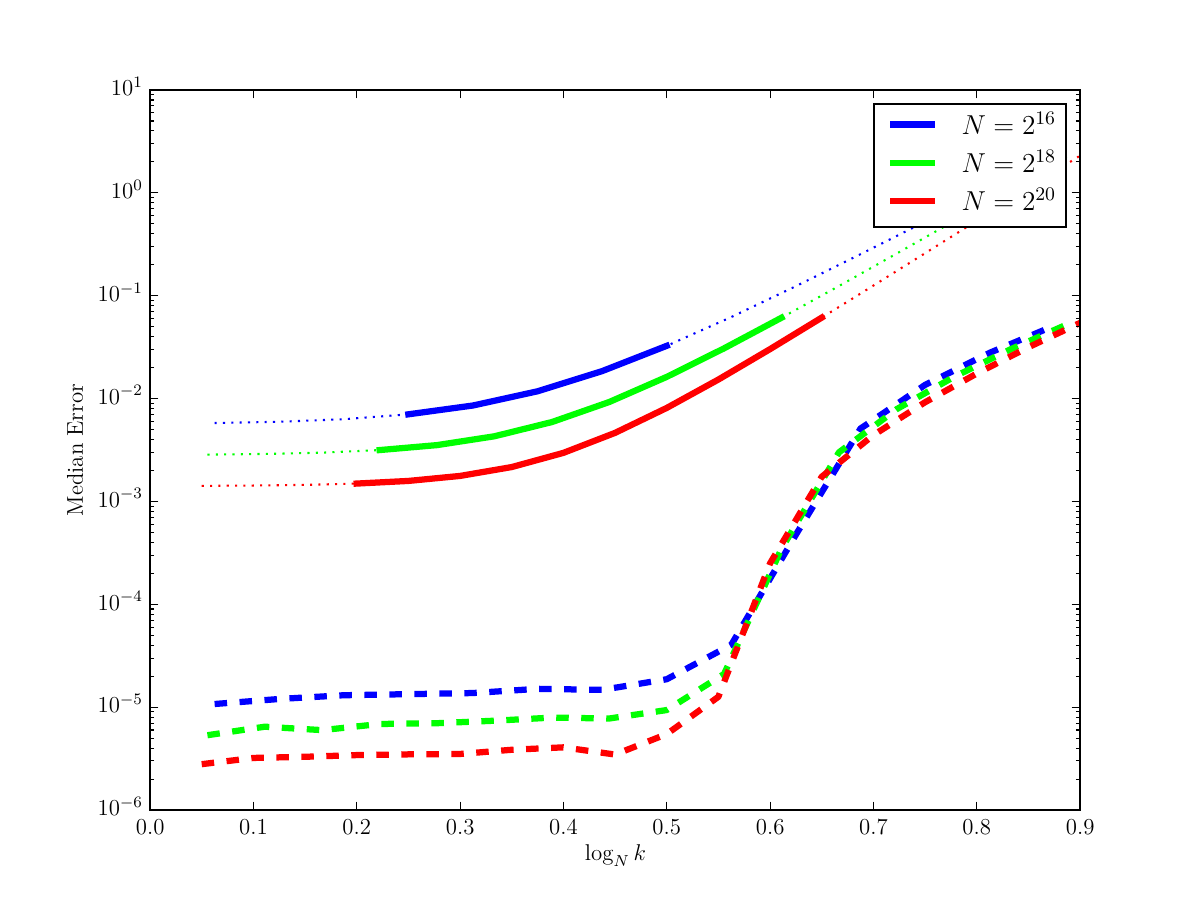}
        \caption{The solid and dotted lines indicate theoretical bounds for the different values of the sample size $N$, with the solid part indicating the number of subgroups $k$ for which our estimates hold. The dashed lines indicate empirical error between the median-of-means estimator and the true mean. We consider three cases: $N=2^{16}$ (blue), $N=2^{18}$ (green), and  $N=2^{20}$ (red). The $x$-axis is $\log_N k$ taken from a uniform partition of $(0,1)$ and the $y$-axis indicates the median error of the median-of-means estimator over $2^{16}$ runs of the experiment. For each value of $N$ and $k$, we run $2^{16}$ simulations by drawing $N$ i.i.d. random variables with Lomax distribution with shape parameter $\alpha=4$ and scale parameter $\lambda=1$, splitting into $k$ groups, and then computing the median of the means of those groups. From the $2^{16}$ simulations, we display (on a logarithmic scale) the median of the absolute differences between the true mean $1/3$ and the median-of-means estimators, producing the dashed lines in the figure. The solid and dotted lines are our theoretical bounds with $4e^{-2s}=1/2$ (that is, the probability that the solid and dotted bounds holds is guaranteed to be at least $1/2$).  }
    \label{fig:pareto}
\end{figure}
Moreover, we compared the theoretical guarantees for the median-of-means estimator (described in section \ref{ssec:med-of-means}) against the empirical outcomes for the Lomax distribution with shape parameter $\alpha=4$ and scale parameter $\lambda=1$; the corresponding probability density function is 
\[
p(x)=\frac{\alpha}{\lambda}\left(1+\frac{x}{\lambda}\right)^{-(\alpha+1)}\text{ for }x\geq 0
\]
In particular, the Lomax distribution with $\alpha=4$ and $\lambda=1$ has mean $1/3$ and median $\sqrt[4]{2}-1\approx 0.1892$. 
Since the mean and median do not coincide, the error of the median-of-means estimator has a significant bias component for large values of $k$. Figure \ref{fig:pareto} depicts the impact of the bias beyond $k=\sqrt{N}$ (equivalently, $\log_N k=1/2$), and also the fact that the median error is mostly flat for $k < \sqrt{N}$. 

Finally, we assessed empirical coverage of the confidence intervals constructed using Theorem \ref{th:clt} and centered at the median-of-means estimator; results are presented in figure \ref{fig:confidence}. The sample of size $N=10^5$ was generated from the half-t distribution with $3$ degrees of freedom; recall that a random variable $\xi$ has half-t distribution with $\nu$ degrees of freedom if $\xi \stackrel{\mathrm{d}}{=}|\eta|$ where $\eta$ has usual t-distribution with $\nu$ degrees of freedom. It is clear that half-t distribution is both asymmetric and heavy-tailed. Each sample was further corrupted by outliers sampled from the normal distribution with mean $0$ and standard deviation $10^5$; the number of outliers ranged from $0$ to $\sqrt{N}=100$ with increments of $20$. The median-of-means estimator was constructed for $k=\sqrt{N}=100$. For comparison, we present empirical coverage levels attained by the sample mean in the same framework. 
\begin{figure}
\centering
 \subfloat[]{
\begin{tabular}[ht]{c|c|c|c|c|c|c}
\hline
\text{Nominal confidence level} & \multicolumn{6}{c}{Fraction of outliers} \\
\hline
\mbox{ } & 0 & $\frac{0.2}{\sqrt{N}}$ & $\frac{0.4}{\sqrt{N}}$ & $\frac{0.6}{\sqrt{N}}$ & $\frac{0.8}{\sqrt{N}}$ & $\frac{1}{\sqrt{N}}$ \\ 
\hline
0.8   & 0.94 & 0.0008 & 0& 0& 0 & 0 \\
0.95 & 0.99 & 0.001 & 0 & 0& 0 & 0 \\
\hline
\end{tabular}
}
\\
 \subfloat[]{
\begin{tabular}[ht]{c|c|c|c|c|c|c}
\hline
\text{Nominal confidence level} & \multicolumn{6}{c}{Fraction of outliers} \\
\hline
\mbox{ } & 0 & $\frac{0.2}{\sqrt{N}}$ & $\frac{0.4}{\sqrt{N}}$ & $\frac{0.6}{\sqrt{N}}$ & $\frac{0.8}{\sqrt{N}}$ & $\frac{1}{\sqrt{N}}$ \\ 
\hline
0.8   & 0.88 & 0.82 & 0.77 & 0.66 & 0.6 & 0.53 \\
0.95 & 0.99 & 0.97 & 0.93 & 0.85 & 0.79 & 0.71\\
\hline
\end{tabular}
}
\caption{Empirical coverage levels of confidence intervals constructed using (a) the Central Limit Theorem for the sample mean and (b) Theorem \ref{th:clt} for the median of means; (a) reflects the results obtained for the sample mean and (b) reflects the results obtained for the median of means estimator.}
\label{fig:confidence}
\end{figure}

\section{Proofs}
\label{sec:proofs}

In this section, we present the proofs of the main results.

\subsection{Preliminaries.}
\label{sec:prelim}

We recall several facts that are used in the proofs below. 
The following bound has been established by A. Berry \citep{berry1941accuracy} and C.-G. Esseen \citep{esseen1942liapounoff}. 
A version with an explicit constant given below is due to \citet{shevtsova2011absolute}.
\begin{fact}[Berry-Esseen bound]
\label{th:BE}
Assume that $Y_1,\ldots,Y_n$ is a sequence of i.i.d. copies of a random variable $Y$ with mean $\mu$, variance $\sigma^2$ and such that $\mb E|Y|^3<\infty$. Then
\[
\sup_{s\in \mb R}\l| \pr{\sqrt{n}\frac{\bar Y - \mu}{\sigma} \leq s} - \Phi(s) \r| \leq 0.4748 \frac{\mb E|Y -\mu |^3}{\sigma^3\sqrt{n}},
\]
where $\bar Y = \frac{1}{n}\sum_{j=1}^n Y_j$ and $\Phi(s)$ is the cumulative distribution function of the standard normal random variable. 
\end{fact}
The following generalization of Berry-Esseen bound is due to \cite{petrov1995limit}. 
\begin{fact}[Generalization of Berry-Esseen bound]
\label{th:BE-gen}
Assume that $Y_1,\ldots,Y_n$ is a sequence of i.i.d. copies of a random variable $Y$ with mean $\mu$, variance $\sigma^2$ and such that $\mb E|Y|^{2+\delta}<\infty$ for some $\delta\in(0,1]$. Then there exists an absolute constant $A>0$ such that
\[
\sup_{s\in \mb R}\l| \pr{\sqrt{n}\frac{\bar Y - \mu}{\sigma} \leq s} - \Phi(s) \r| \leq A \frac{\mb E|Y -\mu |^{2+\delta}}{\sigma^{2+\delta}n^{\delta/2}}.
\]
\end{fact}

Next, we recall a well-known concentration inequality. 		
\begin{fact}[Bounded difference inequality]
\label{th:BDI}
Let $X_1,\ldots,X_k$ be i.i.d. random variables, and assume that $Z=g(X_1,\ldots,X_k)$, where $g$ is such that 
for all $j=1,\ldots,k$ and all $x_1,x_2,\ldots,x_j,x_j',\ldots,x_k$,
\[
\l| g(x_1,\ldots,x_{j-1},x_j,x_{j+1},\ldots,x_k) - g(x_1,\ldots,x_{j-1},x_j',x_{j+1},\ldots,x_k)\r| \leq c_j.
\]
Then 
\[
\pr{ Z-\mb EZ \geq t}\leq \exp\l\{ -\frac{2t^2}{\sum_{j=1}^k c_j^2} \r\}
\]
and 
\[
\pr{ Z-\mb EZ \leq -t}\leq \exp\l\{ -\frac{2t^2}{\sum_{j=1}^k c_j^2} \r\}.
\]
\end{fact}
Finally, we recall the definition of a U-statistic. 
Let $h: \mb R^n\mapsto \mb R$ be a measurable function of $n$ variables, and 
\[
\m A_N^{(n)}:=\l\{  J: \ J\subseteq \{1,\ldots,N\}, \card(J)=n \r\}.
\] 
A U-statistic of order $n$ with kernel $h$ based on the i.i.d. sample $X_1,\ldots,X_N$ is defined as \citep{hoeffding1948class}
\[
U_N(h)=\frac{1}{{n\choose N}}\sum_{J\in \m A_N^{(n)}} h \l( X_j, \ j \in J \r).
\]
Clearly, $\mb E U_N (h)=\mb Eh(X_1,\ldots,X_n)$, moreover, $U_N (h)$ has the smallest variance among all unbiased estimators. The following analogue of fact \ref{th:BDI} holds for the U-statistics:
\begin{fact}[Concentration inequality for U-statistics, \citep{hoeffding1963probability}]
\label{th:U-S}
\mbox{ }\\
Assume that the kernel $h$ satisfies $\l| h(x_1,\ldots,x_n) \r| \leq M$ for all $x_1,\ldots,x_n$. Then for all $s>0$,
\[
\pr{\l| U_N(h) - \mb E U_N(h) \r| \geq s}\leq 2 \exp\l\{ -\frac{2 \lfloor N/n \rfloor t^2}{M^2} \r\}.
\]
\end{fact}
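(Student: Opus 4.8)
The plan is to reproduce Hoeffding's classical argument, whose engine is the observation that the U-statistic $U_N(h)$ can be rewritten as an \emph{average over permutations} of statistics that are genuine sums of \emph{independent} blocks, so that the sub-Gaussian tail bound for bounded independent sums applies after a single convexity step. The factor $\lfloor N/n\rfloor$ appearing in the exponent is precisely the number of independent blocks, which signals that the right move is to decouple, rather than to apply the bounded-difference inequality (Fact \ref{th:BDI}) to $U_N$ directly as a function of $X_1,\ldots,X_N$ — the latter would only produce a bound weaker by a factor of order $n$ in the exponent.

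First I would set $m=\lfloor N/n\rfloor$ and, for each permutation $\pi$ of $\{1,\ldots,N\}$, define the block-average
\[
W_\pi=\frac{1}{m}\sum_{i=1}^m h\l( X_{\pi((i-1)n+1)},\ldots,X_{\pi(in)} \r).
\]
The structural identity I need is
\[
U_N(h)=\frac{1}{N!}\sum_{\pi} W_\pi,
\]
which holds because, once we average over all $\pi$, every index set $J\in \m A_N^{(n)}$ is selected with the same multiplicity, recovering the symmetric average defining $U_N(h)$. The crucial feature of each $W_\pi$ is that its $m$ summands involve \emph{disjoint} groups of the i.i.d. variables $X_1,\ldots,X_N$; hence, for every fixed $\pi$, $W_\pi$ is an average of $m$ independent terms, each bounded in absolute value by $M$.

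Next I would pass to moment generating functions. Since $x\mapsto e^{\lambda x}$ is convex, Jensen's inequality applied to the average over $\pi$ gives
\[
\mb E\exp\l\{ \lambda\l( U_N(h)-\mb E U_N(h) \r) \r\}\leq \frac{1}{N!}\sum_\pi \mb E\exp\l\{ \lambda\l( W_\pi-\mb E W_\pi \r) \r\}.
\]
For each $\pi$, the variable $W_\pi$ is an average of $m$ independent bounded blocks, so Hoeffding's lemma (the moment-generating-function form of the reasoning behind Fact \ref{th:BDI}) bounds the centered MGF by $\exp\{\lambda^2 M^2/(8m)\}$, i.e. it is sub-Gaussian with variance proxy of order $M^2/m$. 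Substituting this into the display, optimizing over $\lambda>0$ by the Chernoff method, and repeating the argument for $-U_N(h)$ to control the lower tail, yields the two-sided inequality with exponent $-2\lfloor N/n\rfloor t^2/M^2$ and leading constant $2$.

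The main obstacle — the only genuinely non-routine point — is establishing the permutation representation and, in particular, verifying that for each $\pi$ the block-average $W_\pi$ is a sum of \emph{independent} summands; once this decoupling is in place, everything else is a standard Chernoff computation. The one subtlety worth tracking is the exact range of $h$ entering Hoeffding's lemma: the hypothesis $\l|h\r|\leq M$ controls each block's oscillation, and carefully accounting for this range is what pins down the precise constant in the exponent.
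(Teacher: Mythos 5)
The paper itself offers no proof of this Fact: it is quoted from Hoeffding (1963), and your argument is precisely Hoeffding's original one — the representation of $U_N(h)$ as an average over permutations of block averages built from disjoint (hence independent) groups of observations, followed by Jensen's inequality at the level of moment generating functions and a Chernoff bound. The permutation identity, the independence of the blocks within each $W_\pi$, and the convexity step are all stated correctly, and you are right that this decoupling (rather than applying the bounded-difference inequality to $U_N$ directly) is what produces the factor $\lfloor N/n\rfloor$ in the exponent.

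There is, however, one genuine gap, and it sits exactly at the spot you flag as a "subtlety": the constant. Hoeffding's lemma for a bounded variable with range $[a,b]$ gives the centered MGF bound $\exp\{\lambda^2(b-a)^2/8\}$. Under the stated hypothesis $|h|\leq M$ the blocks take values in $[-M,M]$, an interval of length $2M$, so for the average $W_\pi$ of $m=\lfloor N/n\rfloor$ independent blocks one gets $\mathbb{E}\exp\{\lambda(W_\pi-\mathbb{E}W_\pi)\}\leq \exp\{\lambda^2(2M)^2/(8m)\}=\exp\{\lambda^2 M^2/(2m)\}$, not $\exp\{\lambda^2 M^2/(8m)\}$ as you assert; the Chernoff step then yields the exponent $-\lfloor N/n\rfloor t^2/(2M^2)$, a factor of $4$ weaker than the one displayed in the Fact. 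Your stronger MGF bound is valid exactly when the kernel's range is an interval of length $M$, e.g. $0\leq h\leq M$. This discrepancy is really a defect of the Fact as stated rather than of your strategy: with kernels allowed to use the full range $[-M,M]$, the inequality with exponent $-2\lfloor N/n\rfloor t^2/M^2$ is false in general (take $n=1$ and $h(X_i)=\pm M$ equiprobably; the tail of $U_N$ at $t=3M/\sqrt{N}$ converges to $1-\Phi(3)\approx 1.35\times 10^{-3}$, which exceeds the claimed bound $2e^{-18}$). In the only place the paper invokes the Fact — indicator kernels taking values in $[0,1]$ with $M=1$ — the range has length $M$, and your proof, specialized to that case, is complete and gives the stated constant.
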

		
\subsection{Proof of Theorem \ref{th:main1}.}
\label{sec:proof1}

Observe that 
\[
\l| \widehat \theta^{(k)} - \theta_\ast \r| = 
\l| \med{\bar \theta_1 - \theta_\ast, \ldots, \bar \theta_k - \theta_\ast} \r|.
\]
Let $\Phi^{(n_j,j)}(\cdot)$ be the distribution function of $\bar \theta_j - \theta_\ast, \ j=1,\ldots,k,$ and $\widehat\Phi_k(\cdot)$ - the empirical distribution function corresponding to the sample $W_1=\bar \theta_1 - \theta_\ast, \ldots, W_k=\bar \theta_k - \theta_\ast$, that is, 
\[
\widehat\Phi_k(z) = \frac{1}{k}\sum_{j=1}^k I \l\{ W_j \leq z\r\}.
\] 
Suppose that $z\in\R$ is fixed, and note that $\widehat\Phi_k(z)$ is a function of the random variables $W_1,\ldots, W_k$, and 
$\mb E\widehat\Phi_k(z) = \frac{1}{k}\sum_{j=1}^k \Phi^{(n_j,j)}(z)$. 
Moreover, the hypothesis of the bounded difference inequality (fact \ref{th:BDI}) is satisfied with $c_j=1/k$ for $j=1,\ldots, k$, and therefore it implies that
\begin{align}
\label{eq:b00}
\l| \widehat\Phi_k(z) - \frac{1}{k}\sum_{j=1}^k \Phi^{(n_j,j)}(z) \r| \leq \sqrt{\frac{s}{k}}
\end{align}
on the draw of $W_1,\ldots, W_k$ with probability $\geq 1 - 2e^{-2s}$. 

Let $z_1\geq z_2$ be such that 
$\frac{1}{k}\sum_{j=1}^k \Phi^{(n_j,j)}(z_1) \geq \frac{1}{2} +\sqrt{\frac{s}{k}}$ and $\frac{1}{k}\sum_{j=1}^k \Phi^{(n_j,j)}(z_2) \leq \frac 1 2 - \sqrt{\frac{s}{k}}$. 
Applying \eqref{eq:b00} for $z=z_1$ and $z=z_2$ together with the union bound, we see that for $j=1,2$,
\[
\l| \widehat\Phi_k(z_j) - \frac{1}{k}\sum_{j=1}^k \Phi^{(n_j,j)}(z_j) \r| \leq \sqrt{\frac{s}{k}}
\]
on an event $\m E$ of probability $\geq 1 - 4e^{-2s}$. 
It follows that on $\m E$, $\widehat\Phi_k(z_1)\geq 1/2$ and $1 - \widehat\Phi_k(z_2)\geq 1/2$ simultaneously, hence 
\begin{align}
\label{eq:10}
&
\med{W_1,\ldots,W_k}\in [z_2,z_1]
\end{align}
by the definition of the median. 
It remains to estimate $z_1$ and $z_2$. 
Assumption \ref{ass:1} implies that
\begin{multline*}
\frac{1}{k}\sum_{j=1}^k \Phi^{(n_j,j)}(z_1) \geq \frac{1}{k}\sum_{j=1}^k \Phi\l( \frac{z_1}{\sigma_{n_j}^{(j)}} \r) - 
\l| \frac{1}{k}\sum_{j=1}^k \l( \Phi^{(n_j,j)}(z_1) - \Phi \l( \frac{z_1}{\sigma_{n_j}^{(j)}} \r) \r) \r| 
\\
\geq \frac{1}{k}\sum_{j=1}^k \Phi\l( \frac{z_1}{\sigma_{n_j}^{(j)}} \r) - \frac{1}{k}\sum_{j=1}^k g_j(n_j). 
\end{multline*}
Hence, it suffices to find $z_1$ such that 
$\frac{1}{k}\sum_{j=1}^k \Phi\l( \frac{z_1}{\sigma_{n_j}^{(j)}} \r) \geq \frac{1}{2} +\sqrt{\frac{s}{k}}+  \frac{1}{k}\sum_{j=1}^k g_j(n_j)$. 
Recall that 
$\alpha_j=\frac{ 1/\sigma_{n_j}^{(j)} }{1/k\sum_{i=1}^k 1/\sigma_{n_j}^{(i)}}, \ j=1,\ldots,k$, 
and let $\zeta_{j}(n_j,s)$ be the solution of the equation 
\[
\Phi\l( \zeta_{j}(n_j,s)/\sigma_n^{(j)} \r) - \frac{1}{2} = \alpha_j\cdot \frac{1}{k}\sum_{i=1}^k \l( g_i(n_i) + \sqrt{\frac{s}{k}} \r).
\] 
Note that $\zeta_{j}(n,s)$ always exists since $\alpha_j \cdot\frac{1}{k}\sum_{i=1}^k \l( g_i(n_i) + \sqrt{\frac{s}{k}} \r)< \frac{1}{2}$ by assumption. 
Finally, since $\sum_{j=1}^k \alpha_j = k$, it is clear that any 
\[
z_1 \geq \max_{j=1,\ldots,k} \zeta_{j}(n_j,s)
\]
satisfies the requirements. 
Similarly, 
\begin{multline*}
\frac{1}{k}\sum_{j=1}^k\Phi^{(n_j,j)}(z_2)\leq  \frac{1}{k}\sum_{j=1}^k \Phi\l( \frac{z_2}{\sigma_{n_j}^{(j)}} \r) +
\l| \frac{1}{k}\sum_{j=1}^k \l( \Phi^{(n_j,j)}(z_2) - \Phi \l( \frac{z_2}{\sigma_{n_j}^{(j)}} \r) \r) \r| 
\\
\leq \frac{1}{k}\sum_{j=1}^k \Phi\l( \frac{z_2}{\sigma_{n_j}^{(j)}} \r) + \frac{1}{k}\sum_{j=1}^k g_j(n_j)
\end{multline*}
by assumption \ref{ass:1}, hence it is sufficient to choose $z_2$ such that 
$z_2\leq \max_{j=1,\ldots,k}\tilde\zeta_{j}(n_j,s)$, where $\tilde\zeta_{j}(n_j,s)$ satisfies 
$\Phi\l( \tilde\zeta_{j}(n_j,s)/\sigma_n^{(j)} \r) - \frac{1}{2} =  - \alpha_j\cdot \frac{1}{k}\sum_{i=1}^k \l( g_i(n_i) + \sqrt{\frac{s}{k}} \r)$.
Noting that $\tilde\zeta_{j}(n_j,s)=-\zeta_{j}(n_j,s)$ and recalling \eqref{eq:10}, we conclude that 
\[
\l| \widehat \theta^{(k)} - \theta_\ast \r| \leq \max_{j=1,\ldots,k} \zeta_{j}(n_j,s)
\]
with probability at least $1 - 4e^{-2s}$. 

\subsection{Proof of Theorem \ref{th:M-est}.}
\label{sec:proof-M}

We will use notation as in the proof of Theorem \ref{th:main1}. 
Clearly, $\widehat\theta_\rho^{(k)}$ satisfies the equation $G(\widehat\theta_\rho^{(k)})=0$, where 
\[
G(z)=\frac{1}{k}\sum_{j=1}^k \rho'\l(z - \bar\theta_j\r).
\]
Suppose $z_1,z_2$ are such that $G(z_1) > 0$ and $G(z_2) < 0$. 
Since $G$ is increasing, it is easy to see that $\widehat\theta_\rho^{(k)}\in (z_2,z_1)$. 
To find such $z_1$ and $z_2$, we proceed in 3 steps. 

(a) First, observe that the bounded difference inequality (fact \ref{th:BDI}) implies that for any fixed $z\in \mb R$,
\begin{align*}
&
\frac{1}{k}\l| \sum_{j=1}^k 
\Big( \rho'\l( z - \bar\theta_j\r) - \mb E \rho'\l( z - \bar\theta_j\r) \Big) \r| \leq 
\l\|\rho' \r\|_\infty \sqrt{\frac{s}{k}}
\end{align*}
with probability $\geq 1 - 2e^{-2s}$. 

(b) Next, we will find an upper bound for 
\[
\frac{1}{k}\l| \sum_{j=1}^k 
\l( \mb E \rho'\l( z - \bar\theta_j\r)  -
\mb E \rho'\l( z - Z_j\r) \r)\r|,
\]
where $Z_j\sim N\l( \theta_\ast,\l( \sigma_{n_j}^{(j)}\r)^2 \r), \ j=1,\ldots,k$ are independent. 
Note that for any bounded non-negative function $f:\mb R\mapsto \mb R_+$ and a signed measure $Q$, 
\begin{align*}
\l| \int_{\mb R} f(x)dQ \r| = \l| \int_{0}^{\|f\|_\infty} Q\l( x: \,f(x)\geq t \r) dt \r|
\leq \|f\|_\infty \max_{t\geq 0} \l|  Q\l( x: \,f(x)\geq t \r)\r|.
\end{align*} 
Since any bounded function $f$ can be written as $f = \max(f,0) - \max(-f, 0)$, we deduce that 
\[
\l| \int_{\mb R} f(x)dQ \r|\leq \|f\|_\infty \l( \max_{t\geq 0} \l|  Q\l( x: \,f(x)\geq t \r)\r| + \max_{t\leq 0} \l|  Q\l( x: \,f(x)\leq t \r)\r|\r).
\] 
Moreover, if $f$ is monotone, the sets $\{x: \,f(x)\geq t\}$ and $\{x: \,f(x)\leq t\}$ are half-intervals. 
Applying this to $f=\rho'$ and $Q=\Phi^{(n_j,j)} - \Phi$, we deduce that
\begin{align*}
\frac{1}{k}\l| \sum_{j=1}^k 
\l( \mb E \rho'\l( z - \bar\theta_j\r)  - 
\mb E \rho'\l( z - Z_j \r) \r)\r|
&\leq 
2\|\rho'\|_\infty \frac{1}{k}\sum_{j=1}^k \sup_{t\in \mb R} \l| \Phi^{(n_j,j)}(t) - \Phi(t) \r| 
\\
& \leq 2\|\rho'\|_\infty \, \frac{1}{k}\sum_{j=1}^k g_j(n_j)
\end{align*}
by assumption \ref{ass:1}.

(c) In remains to find $z_1$ satisfying
\[
\frac{1}{k}\sum_{j=1}^k \mb E\, \rho'\l( z_1 -\theta_\ast - (Z_j-\theta_\ast)\r)
> \l\|\rho' \r\|_\infty \l( \sqrt{\frac{s}{k}} + \frac{2}{k}\sum_{i=1}^k g_i(n_i) \r).
\]
Let $\tilde z_1:= z_1-\theta_\ast$ and $\tilde Z_j:=Z_j - \theta_\ast$. 
Since $\sum_{j=1}^k \alpha_j=k$ (where $\alpha_j$'s were defined in \eqref{eq:alpha}), it suffices to find $z_1$ such that 
$\mb E \rho'\l( \tilde z_1 - \tilde  Z_j\r) > \alpha_j\l\|\rho' \r\|_\infty \l( \sqrt{\frac{s}{k}} + \frac{2}{k}\sum_{i=1}^k g_i(n_i) \r)$ for all $j$. 
For any bounded function $h$ such that $h(-x)=-h(x)$ and $h(x)\geq 0$ for $x\geq 0$, and any $z\geq 0$,
\[
\int_\mb R h(x+z)\phi(x) dx = \int_0^\infty h(x)\l( \phi(x-z) - \phi(-x-z)\r)dx \geq 0, 
\] 
where $\phi(x) = (2\pi)^{-1/2}e^{-x^2/2}$. 
Recall that $C_\rho>0$ is such that $|\rho'(x)|\geq \|\rho'\|_\infty/2$ for $|x|\geq C_\rho$. It follows that
\begin{align*}
\mb E \rho'\l( \tilde  z_1 - \tilde Z_j\r) &\geq 
\frac{1}{2}\|\rho'\|_\infty \mb E \Big( I\{ \tilde z_1 - \tilde Z_j \geq C_\rho \} - I\{ \tilde z_1 - \tilde Z_j \leq - C_\rho\} \Big) 
\\
& = \frac{1}{2}\|\rho'\|_\infty \Big( \pr{\tilde Z_j \geq C_\rho - \tilde z_1} - \pr{\tilde Z_j \leq -C_\rho - \tilde z_1}\Big)
\\
& = \frac{1}{2}\|\rho'\|_\infty \pr{Z \in \l[ \frac{C_\rho - \tilde z_1}{\sigma_{n_j}^{(j)}}, \frac{C_\rho + \tilde z_1}{\sigma_{n_j}^{(j)}}\r]},
\end{align*}
where $Z\sim N(0,1)$. Next, Lemma \ref{lemma:normal} implies that 
\[
\pr{Z \in \l[ \frac{C_\rho - \tilde z_1}{\sigma_{n_j}^{(j)}}, \frac{C_\rho + \tilde z_1}{\sigma_{n_j}^{(j)}}\r]}
\geq 2 e^{-\l( C_\rho/\sigma_{n_j}^{(j)}\r)^2}\pr{Z\in \l[ 0,\tilde z_1/\sigma_{n_j}^{(j)} \r] }.
\]
Combining the previous two bounds, we deduce that it suffices to find $\tilde z_1>0$ such that 
\[
\pr{Z\in [0,\tilde z_1/\sigma_{n_j}^{(j)} } \geq \alpha_j \,e^{\l( C_\rho/\sigma_{n_j}^{(j)}\r)^2}\l( \sqrt{\frac{s}{k}} + \frac{2}{k}\sum_{i=1}^k g_i(n_i) \r).
\]
By our assumptions, $\max_{j=1,\ldots,k} \alpha_j \,e^{\l( C_\rho/\sigma_{n_j}^{(j)}\r)^2}\l( \sqrt{\frac{s}{k}} + \frac{2}{k}\sum_{i=1}^k g_i(n_i) \r) \leq 0.33$. Lemma \ref{cor:0} yields that it suffices to take
\[
\tilde z_1 = z_1 - \theta_\ast = 3H_k \max_{j=1,\ldots,k} e^{\l( C_\rho/\sigma_{n_j}^{(j)}\r)^2}  \l( \sqrt{\frac{s}{k}} + \frac{2}{k}\sum_{i=1}^k g_i(n_i) \r).
\]
The estimate for $z_2$ follows the same pattern, and yields that one can take $z_2$ as 
\[
z_2 = \theta_\ast - 3H_k \max_{j=1,\ldots,k} e^{\l( C_\rho/\sigma_{n_j}^{(j)}\r)^2}  \l( \sqrt{\frac{s}{k}} + \frac{2}{k}\sum_{i=1}^k g_i(n_i) \r),
\]
implying the claim.
		
\subsection{Proof of Theorem \ref{th:clt}.}
\label{sec:proofclt}

Recall that $L(z)=\mb E\rho'(z+Z)$ for $Z\sim N(0,1)$, and note that under our assumptions, equation $L(z) = 0$ has a unique solution $z=0$ (even if $\rho$ is not strictly convex). Next, observe that 
\begin{align*}
\pr{ \sum_{j=1}^k \rho'_- \l(\frac{\theta_\ast - \bar\theta_j + \frac{t\Delta\sigma_n}{\sqrt{k}} }{\sigma_n}\r) <0 } & \leq
\pr{ \frac{\sqrt{k}}{\sigma_n}\l( \widehat\theta^{(k)}_\rho - \theta_\ast\r) \geq t\Delta  } \\
& \leq 
\pr{ \sum_{j=1}^k \rho'_- \l(\frac{\theta_\ast - \bar\theta_j + \frac{t\Delta\sigma_n}{\sqrt{k}} }{\sigma_n}\r) \leq 0 },
\end{align*}
hence it suffices to show that both the left-hand side and the right-hand side of the inequality above converge to 
$1-\Phi(t)$ for all $t$. We will outline the argument for the left-hand side, and the remaining part is proven in a similar fashion. Note that 
\begin{align}
\label{eq:clt}
\pr{ \sum_{j=1}^k \rho'_-\l(\frac{\theta_\ast - \bar\theta_j + \frac{t\Delta\sigma_n}{\sqrt{k}} }{\sigma_n}\r) <0 } =
\pr{ \frac{\sum_{j=1}^k Y_{n,j} - \mb EY_{n,j} }{\sqrt{k\var\l( Y_{n,1} \r)}} < -\frac{\sqrt{k}  \, \mb E Y_{n,1} }{\sqrt{\var\l(Y_{n,1} \r)} }}, 
\end{align}
where $Y_{n,j} = \rho'_-\l(\frac{\theta_\ast - \bar\theta_j + \frac{t\Delta\sigma_n}{\sqrt{k}} }{\sigma_n}\r)$. 

\begin{lemma}
\label{lemma:lim}
Under the assumptions of Theorem \ref{th:clt}, 
$\sqrt{k}\mb EY_{n,1}\to t\, \Delta\,L'(0)$ and 
\[
\sqrt{\var\l(Y_{n,1} \r)}~\to~\sqrt{\mb E\l( \rho'(Z) \r)^2} = \Delta \cdot L'(0) \text{ as } N\to\infty, 
\]
where $Z\sim N(0,1)$. 
\end{lemma}
\begin{proof}[of Lemma \ref{lemma:lim}]
Let $Z\sim N(0,1)$. Since $\rho$ is convex, its derivative $\rho':=(\rho'_+ + \rho'_-)/2$ is monotone and continuous almost everywhere (with respect to Lebesgue measure). 
Together with the assumption that $\|\rho' \|_\infty<\infty$, Lebesgue dominated convergence Theorem implies that 
\begin{multline}
\label{eq:differ}
\frac{d}{dz} L(z)\big|_{z=0} = \lim_{h\to 0} \frac{1}{h\sqrt{2\pi}}\int_\mb R \rho'(x+h) e^{-x^2/2} dx = \lim_{h\to 0} \frac{1}{h\sqrt{2\pi}}\int_\mb R \rho'(x) e^{-(x-h)^2/2}dx 
\\ 
= \frac{1}{\sqrt{2\pi}} \int_\mb R x\rho'(x) e^{-x^2/2}dx.
\end{multline}
Next, we will prove the assertion that $\sqrt{k}\mb EY_{n,1}\to t\, \Delta\,L'(0)$. 
It is easy to see that 
\begin{multline*}
\sqrt{k}\mb EY_{n,1} 
=\sqrt{k}
\l( \mb E \rho'\l( \frac{\theta_\ast - \bar\theta_1}{\sigma_n} + \frac{t\Delta}{\sqrt{k}} \r) - \mb E\rho'\l(Z + \frac{t\Delta}{\sqrt{k}} \r) \r) \\
 +  t\Delta\cdot \frac{1}{t\Delta/\sqrt{k}} \l( \mb E\rho'\l(Z + \frac{t\Delta}{\sqrt{k}} \r) -  \underbrace{\mb E\rho'\l(Z\r)}_{=0} \r).
\end{multline*}
Reasoning as in the proof of Theorem \ref{th:M-est}  (see step (b) in section \ref{sec:proof-M}), we deduce that 
\[
\l| \mb E \rho'\l( \frac{\theta_\ast - \bar\theta_1}{\sigma_n} + \frac{t\Delta}{\sqrt{k}} \r) - \mb E\rho'\l(Z + \frac{t\Delta}{\sqrt{k}} \r) \r| \leq 2\l\| \rho' \r\|_\infty\, g(n),
\]
where $g(n)$ is the function from assumption \ref{ass:1}. Hence, recalling that $g(n)\sqrt{k}\to 0$ as $N\to\infty$, we obtain that 
\[
\sqrt{k}
\l( \mb E \rho'\l( \frac{\theta_\ast - \bar\theta_1}{\sigma_n} + \frac{t\Delta}{\sqrt{k}} \r) - \mb E\rho'\l(Z + \frac{t\Delta}{\sqrt{k}} \r) \r) \to 0 \text{ as } N\to \infty.
\]
On the other hand, it follows from \eqref{eq:differ} that for $t\ne 0$
\[
t\Delta\cdot \frac{1}{t\Delta/\sqrt{k}} \mb E\rho'\l(Z + \frac{t\Delta}{\sqrt{k}} \r) 
\xrightarrow{N\to\infty} t\Delta\cdot L'(0).
\]
For $t=0$, it is also clear that $\mb E\rho'\l( Z \r) = 0$. 
To establish the fact that $\sqrt{\var\l(Y_{n,1} \r)}~\to~\sqrt{\mb E\l( \rho'(Z) \r)^2}$, note that weak convergence of 
$\frac{\bar\theta_1 - \theta_\ast}{\sigma_n}$ to the normal law (assumption \ref{ass:1}) together with Lebesgue dominated convergence Theorem implies that 
\begin{align*}
&\mb E \rho'\l(\frac{\theta_\ast - \bar\theta_1 }{\sigma_n} + \frac{t\Delta}{\sqrt{k}}\r) \to \mb E \rho'\l( Z\r)\ = 0,
\\
& \mb E\l( \rho'\l(\frac{\theta_\ast - \bar\theta_1 }{\sigma_n} + \frac{t\Delta}{\sqrt{k}}\r) \r)^2 \to \mb E \l( \rho'(Z) \r)^2.
\end{align*}
Since $L'(0)>0$, we deduce that 
\[
\mb E^{1/2} \l( \rho'(Z) \r)^2 = \Delta\cdot L'(0),
\]
and the claim follows.
\end{proof}

\noindent
Lemma \ref{lemma:lim} implies that 
$-\frac{\sqrt{k}  \, \mb E Y_{n,1} }{\sqrt{\var\l(Y_{n,1} \r)} }\xrightarrow{N\to\infty} t$. 
It remains to apply Lindeberg's Central Limit Theorem \citep[][Theorem 1.9.3]{serfling1981approximation} to $Y_{n,j}$'s to deduce the result from equation \eqref{eq:clt}. 
To this end, we only need to verify the Lindeberg condition requiring that for any $\eps>0$,
\begin{align}
\label{eq:g00}
&
\mb E (Y_{n,1} - \mb E Y_{n,1})^2 \,I\l\{ |Y_{n,1} - \mb EY_{n,1}|\geq \eps\sqrt{k}\r\} \to 0 \text{ as } k\to\infty.
\end{align}
However, since $\rho'(\cdot)$ (and hence $Y_{n,1}$) is bounded, \eqref{eq:g00} easily follows.

\subsection{Proof of Theorem \ref{th:main1a}.}
\label{sec:proof1a}

The argument is similar to the proof of Theorem \ref{th:main1}. 
Let $\Phi^{(n)}(\cdot)$ be the distribution function of $\frac{\bar \theta_1 - \theta_\ast}{\sigma_n}$ and 
$\widehat\Phi_{N\choose n}(\cdot)$ - the empirical distribution function corresponding to the sample 
$\l\{ W_J=\frac{\bar \theta_J - \theta_\ast}{\sigma_n}, \ J\in \m A_N^{(n)} \r\}$ of size $N \choose n$. 

Suppose that $z\in\R$ is fixed, and note that $\widehat\Phi_{N\choose n}(z)$ is a U-statistic with mean $\Phi^{(n)}(z)$. 
We will apply the concentration inequality for U-statistics (fact \ref{th:U-S}) with $M=1$ to get that 
\begin{align}
\label{eq:b01}
\l| \widehat\Phi_{N\choose n}(z) - \Phi^{(n)}(z) \r| \leq \sqrt{\frac{s}{\lfloor N/n \rfloor}} \leq \sqrt{\frac{s}{k}}
\end{align}
with probability $\geq 1 - 2e^{-2s}$; here, we also used the fact that $n=\lfloor N/k\rfloor$. 

Let $z_1\geq z_2$ be such that 
$\Phi^{(n)}(z_1) \geq \frac{1}{2} +\sqrt{\frac{s}{k}}$ and $\Phi^{(n)}(z_2) \leq \frac 1 2 - \sqrt{\frac{s}{k}}$. 
Applying \eqref{eq:b01} for $z=z_1$ and $z=z_2$ together with the union bound, we see that for $j=1,2$,
\[
\l| \widehat\Phi_{N\choose n}(z_j) - \Phi^{(n)}(z_j) \r| \leq \sqrt{\frac{s}{k}}
\]
on an event $\m E$ of probability $\geq 1 - 4e^{-2s}$.  
It follows that on $\m E$, $\med{W_J, J \in \m A_N^{(n)}}\in [z_2,z_1]$. 
The rest of the proof repeats the argument of section \ref{sec:proof1}. 

\subsection{Proof of Theorem \ref{th:mvar}.}
\label{sec:proof-mvar}

Set $F(z) := \sum_{j=1}^k \sum_{i=1}^m \rho_i(z_i - \bar\theta_{j,i})$. 
Then $\widehat\theta^{(k)} = \argmin_{z\in \mb R^m} F(z)$ by the definition. 
Since $F(z)$ is convex, the sufficient and necessary condition for $\widehat\theta^{(k)}$ to be its minimizer is 
that $0\in \partial F(\widehat\theta^{(k)})$, the subdifferential of $F$ at point $z$. 
It is easy to see that 
\[
\partial F(z) =  \left\{ u\in \mb R^m: \ 
\sum_{j=1}^k \rho'_{-,i}(z_i - \bar\theta_{j,i}) \leq u_i \leq  \sum_{j=1}^k \rho'_{+,i}(z_i - \bar\theta_{j,i}), \ i=1,\ldots,m\right\},
\]
where $\rho'_{+,i}(x) := \lim_{t\searrow 0} \frac{\rho_i(x+t) - \rho_i(x)}{t}$ and $\rho'_{-,i}(x) := \lim_{t\nearrow 0} \frac{\rho_i(x + t) - \rho_i(x)}{t}$ are the right and left derivative of $\rho_i$ at point $x$ respectively. 

Since the subdifferential is convex, it suffices to find points $z_{i,1}, z_{i,2}, \ i=1,\ldots,m$ such that for all $i$,
\begin{align}
\label{eq:subdif}
&
\sum_{j=1}^k \rho'_{-,i}(z_{i,1} - \bar\theta_{j,i})\leq 0, \\
&
\nonumber
\sum_{j=1}^k \rho'_{+,i}(z_{i,2} - \bar\theta_{j,i})\geq 0.
\end{align} 
This task has already been accomplished in the proof of Theorem \ref{th:M-est}: since $\rho_{+,i}, \ \rho_{-,i}, \ i=1,\ldots,m$ are nondecreasing functions, repeating the argument of section \ref{sec:proof-M} yields that, on an event of probability 
$\geq 1 - 4e^{-2s}$, inequalities \eqref{eq:subdif} hold with
\begin{align}
z_{i,1} & = \theta_{\ast,i} + 3\sigma_{n,i} e^{\l( C_{\rho_i}/\sigma_{n,i}\r)^2}  \l( \sqrt{\frac{s}{k}} + 2 g_m(n) \r), 
\\
\nonumber
z_{i,2} & = \theta_{\ast,i} - 3\sigma_{n,i} e^{\l( C_{\rho_i}/\sigma_{n,i}\r)^2}  \l( \sqrt{\frac{s}{k}} + 2 g_m(n) \r).
\end{align}
We have thus shown that for each $i=1,\ldots,m$, 
\[
\l| \widehat\theta_i^{(k)} - \theta_{\ast,i} \r| \leq 3 e^{\l( C_{\rho_i}/\sigma_{n,i}\r)^2} \cdot\sigma_{n,i}\l( \sqrt{\frac{s}{k}} + 2 g_m(n) \r)
\]
with probability $\geq 1 - 4e^{-2s}$. Applying the union bound over all $i$, we obtain the result. 

\subsection{Proof of Lemma \ref{cor:0}.}
\label{proof:cor0}

It is a simple numerical fact that whenever 
\[
\alpha_j \cdot \frac{1}{k}\sum_{j=1}^k \l( g_j(n_j) + \sqrt{\frac{s}{k}} \r)\leq 0.33,
\] 
$\zeta_j(n_j,s)/\sigma_{n_j}^{(j)}\leq 1$ (indeed, this follows since $\Phi(1)\simeq 0.8413>1/2+0.33$). 
Set $B(s):=\frac{1}{k}\sum_{j=1}^k \l( g_j(n_j) + \sqrt{\frac{s}{k}} \r)$ for brevity. 
Since $e^{-y^2/2}\geq 1 - \frac{y^2}{2}$, we have 
\begin{multline}
\label{eq:05}
\sqrt{2\pi} \alpha_j \cdot B(s) = \int_0^{\zeta_j(n_j,s)/\sigma_{n_j}^{(j)} } e^{-y^2/2} dy \\
\geq 
\frac{\zeta_j(n_j,s)}{\sigma_{n_j}^{(j)}} - \frac{1}{6}\l( \frac{\zeta_j(n_j,s)}{\sigma_{n_j}^{(j)}}\r)^3
\geq \frac{5}{6} \frac{\zeta_j(n_j,s)}{\sigma_{n_j}^{(j)}},
\end{multline} 
where the last inequality follows since $\zeta_j(n_j,s)/\sigma_{n_j}^{(j)}\leq 1$. 
Equation \eqref{eq:05} implies that 
$\frac{\zeta_j(n_j,s)}{\sigma_{n_j}^{(j)}} \leq \frac{6}{5}\alpha_j \sqrt{2\pi} B(s)$. 
Proceeding again as in \eqref{eq:05}, we see that 
\begin{multline*}
\sqrt{2\pi} \alpha_j \, B(s) \geq 
\frac{\zeta_j(n_j,s)}{\sigma_{n_j}^{(j)}} - \frac{1}{6}\l( \frac{\zeta_j(n_j,s)}{\sigma_{n_j}^{(j)}}\r)^3
\\
\geq 
\frac{\zeta_j(n_j,s)}{\sigma_{n_j}^{(j)}} - \frac{12\pi}{25}\alpha_j^2 \l( B(s)\r)^2 \frac{\zeta_j(n_j,s) }{\sigma_{n_j}^{(j)}}
\\
\geq \frac{\zeta_j(n_j,s)}{\sigma_{n_j}^{(j)}}\l( 1 - 1.51 \, \alpha_j^2\l(B(s) \r)^2 \r),
\end{multline*}
hence $\frac{\zeta_j(n_j,s)}{\sigma_{n_j}^{(j)}} \leq \frac{\sqrt{2\pi}}{1 - 1.51 \, \alpha_j^2\l( B(s) \r)^2 }\, \alpha_j B(s).$ 
The claim follows since $\alpha_j  B(s)\leq 0.33$ for all $j$ by assumption, and $\sigma_{n_j}^{(j)} \alpha_j \equiv H_k$. 

\section*{Acknowledgements}

Authors would like to thank Anatoli Juditsky for many insightful comments and suggestions.

\bibliographystyle{rss}
\bibliography{minskerstrawnbib,bibliography,bibliography2}

\begin{thebibliography}{69}
\expandafter\ifx\csname natexlab\endcsname\relax\def\natexlab#1{#1}\fi
\expandafter\ifx\csname url\endcsname\relax
  \def\url#1{\texttt{#1}}\fi
\expandafter\ifx\csname urlprefix\endcsname\relax\def\urlprefix{URL: }\fi

\bibitem[{Alon et~al.(1996)Alon, Matias and Szegedy}]{alon1996space}
Alon, N., Matias, Y. and Szegedy, M. (1996) The space complexity of
  approximating the frequency moments.
\newblock In \textit{Proceedings of the twenty-eighth annual ACM symposium on
  Theory of computing}, 20--29. ACM.

\bibitem[{Arcones(1996)}]{arcones1996bahadur}
Arcones, M.~A. (1996) The {Bahadur-Kiefer} representation for {U-quantiles}.
\newblock \textit{The Annals of Statistics}, \textbf{24}, 1400--1422.

\bibitem[{Battey et~al.(2015)Battey, Fan, Liu, Lu and
  Zhu}]{battey2015distributed}
Battey, H., Fan, J., Liu, H., Lu, J. and Zhu, Z. (2015) Distributed estimation
  and inference with statistical guarantees.
\newblock \textit{arXiv preprint arXiv:1509.05457}.

\bibitem[{Bentkus(2003)}]{bentkus2003dependence}
Bentkus, V. (2003) On the dependence of the berry--esseen bound on dimension.
\newblock \textit{Journal of Statistical Planning and Inference}, \textbf{113},
  385--402.

\bibitem[{Bentkus et~al.(1997)Bentkus, Bloznelis and
  G\"{o}tze}]{bentkus1997berry}
Bentkus, V., Bloznelis, M. and G\"{o}tze, F. (1997) A {Berry--Ess\'{e}en bound}
  for {M}-estimators.
\newblock \textit{Scandinavian journal of statistics}, \textbf{24}, 485--502.

\bibitem[{Berry(1941)}]{berry1941accuracy}
Berry, A.~C. (1941) The accuracy of the {Gaussian} approximation to the sum of
  independent variates.
\newblock \textit{Transactions of the american mathematical society},
  \textbf{49}, 122--136.

\bibitem[{Bickel et~al.(1965)}]{bickel1965some}
Bickel, P.~J. et~al. (1965) On some robust estimates of location.
\newblock \textit{The Annals of Mathematical Statistics}, \textbf{36},
  847--858.

\bibitem[{Bubeck et~al.(2013)Bubeck, Cesa-Bianchi and
  Lugosi}]{bubeck2013bandits}
Bubeck, S., Cesa-Bianchi, N. and Lugosi, G. (2013) Bandits with heavy tail.
\newblock \textit{IEEE Transactions on Information Theory}, \textbf{59},
  7711--7717.

\bibitem[{Cardot et~al.(2013)Cardot, Cenac, Zitt et~al.}]{cardot2013efficient}
Cardot, H., Cenac, P., Zitt, P.-A. et~al. (2013) Efficient and fast estimation
  of the geometric median in {H}ilbert spaces with an averaged stochastic
  gradient algorithm.
\newblock \textit{Bernoulli}, \textbf{19}, 18--43.

\bibitem[{Catoni(2012)}]{catoni2012challenging}
Catoni, O. (2012) Challenging the empirical mean and empirical variance: a
  deviation study.
\newblock In \textit{Annales de l'Institut Henri Poincar{\'e}, Probabilit{\'e}s
  et Statistiques}, vol.~48, 1148--1185. Institut Henri Poincar{\'e}.

\bibitem[{Cheng and Shang(2015)}]{cheng2015computational}
Cheng, G. and Shang, Z. (2015) Computational limits of divide-and-conquer
  method.
\newblock \textit{arXiv preprint arXiv:1512.09226}.

\bibitem[{Cohen et~al.(2016)Cohen, Lee, Miller, Pachocki and
  Sidford}]{cohen2016geometric}
Cohen, M.~B., Lee, Y.~T., Miller, G., Pachocki, J. and Sidford, A. (2016)
  Geometric median in nearly linear time.
\newblock \textit{arXiv preprint arXiv:1606.05225}.

\bibitem[{Devroye et~al.(2016)Devroye, Lerasle, Lugosi, Oliveira
  et~al.}]{devroye2016sub}
Devroye, L., Lerasle, M., Lugosi, G., Oliveira, R.~I. et~al. (2016)
  {Sub-Gaussian} mean estimators.
\newblock \textit{The Annals of Statistics}, \textbf{44}, 2695--2725.

\bibitem[{Duchi et~al.(2014)Duchi, Jordan, Wainwright and
  Zhang}]{duchi2014optimality}
Duchi, J.~C., Jordan, M.~I., Wainwright, M.~J. and Zhang, Y. (2014) Optimality
  guarantees for distributed statistical estimation.
\newblock \textit{arXiv preprint arXiv:1405.0782}.

\bibitem[{Dudley(1978)}]{dudley1978central}
Dudley, R.~M. (1978) Central limit theorems for empirical measures.
\newblock \textit{The Annals of Probability}, 899--929.

\bibitem[{Esseen(1942)}]{esseen1942liapounoff}
Esseen, C.-G. (1942) \textit{On the Liapounoff limit of error in the theory of
  probability}.
\newblock Almqvist \&amp; Wiksell.

\bibitem[{Fan et~al.(2014)Fan, Han and Liu}]{fan2014challenges}
Fan, J., Han, F. and Liu, H. (2014) Challenges of {B}ig {D}ata analysis.
\newblock \textit{National science review}, \textbf{1}, 293--314.

\bibitem[{Fan et~al.(2017)Fan, Wang, Wang and Zhu}]{fan2017distributed}
Fan, J., Wang, D., Wang, K. and Zhu, Z. (2017) Distributed estimation of
  principal eigenspaces.
\newblock \textit{arXiv preprint arXiv:1702.06488}.

\bibitem[{Ghosh et~al.(1984)Ghosh, Parr, Singh, Babu et~al.}]{ghosh1984note}
Ghosh, M., Parr, W.~C., Singh, K., Babu, G.~J. et~al. (1984) A note on
  bootstrapping the sample median.
\newblock \textit{The Annals of Statistics}, \textbf{12}, 1130--1135.

\bibitem[{Gin{\'e} and Nickl(2015)}]{gine2015mathematical}
Gin{\'e}, E. and Nickl, R. (2015) \textit{Mathematical foundations of
  infinite-dimensional statistical models}, vol.~40.
\newblock Cambridge University Press.

\bibitem[{Haldane(1948)}]{haldane1948note}
Haldane, J. B.~S. (1948) Note on the median of a multivariate distribution.
\newblock \textit{Biometrika}, \textbf{35}, 414--417.

\bibitem[{Hampel et~al.(2011)Hampel, Ronchetti, Rousseeuw and
  Stahel}]{hampel2011robust}
Hampel, F.~R., Ronchetti, E.~M., Rousseeuw, P.~J. and Stahel, W.~A. (2011)
  \textit{Robust statistics: the approach based on influence functions}, vol.
  196.
\newblock John Wiley \& Sons.

\bibitem[{Haussler(1995)}]{haussler1995sphere}
Haussler, D. (1995) Sphere packing numbers for subsets of the boolean n-cube
  with bounded vapnik-chervonenkis dimension.
\newblock \textit{Journal of Combinatorial Theory, Series A}, \textbf{69},
  217--232.

\bibitem[{Hodges and Lehmann(1963)}]{hodges1963estimates}
Hodges, J.~L. and Lehmann, E.~L. (1963) Estimates of location based on rank
  tests.
\newblock \textit{The Annals of Mathematical Statistics}, 598--611.

\bibitem[{Hoeffding(1948)}]{hoeffding1948class}
Hoeffding, W. (1948) A class of statistics with asymptotically normal
  distribution.
\newblock \textit{The Annals of Mathematical Statistics}, 293--325.

\bibitem[{Hoeffding(1963)}]{hoeffding1963probability}
--- (1963) Probability inequalities for sums of bounded random variables.
\newblock \textit{Journal of the American statistical association},
  \textbf{58}, 13--30.

\bibitem[{Hsu and Sabato(2013)}]{Hsu2013Loss-minimizati00}
Hsu, D. and Sabato, S. (2013) Loss minimization and parameter estimation with
  heavy tails.
\newblock \textit{arXiv preprint arXiv:1307.1827}.

\bibitem[{Hsu and Sabato(2016)}]{hsu2016loss}
--- (2016) Loss minimization and parameter estimation with heavy tails.
\newblock \textit{Journal of Machine Learning Research}, \textbf{17}, 1--40.

\bibitem[{Huber(1964)}]{huber1964robust}
Huber, P.~J. (1964) Robust estimation of a location parameter.
\newblock \textit{The Annals of Mathematical Statistics}, \textbf{35}, 73--101.

\bibitem[{IBM(2015)}]{IBM}
IBM (2015) What is {Big Data?}
\newblock
  \url{https://www-01.ibm.com/software/data/bigdata/what-is-big-data.html}.

\bibitem[{Jerrum et~al.(1986)Jerrum, Valiant and Vazirani}]{jerrum1986random}
Jerrum, M.~R., Valiant, L.~G. and Vazirani, V.~V. (1986) Random generation of
  combinatorial structures from a uniform distribution.
\newblock \textit{Theoretical Computer Science}, \textbf{43}, 169--188.

\bibitem[{Joly et~al.(2016)Joly, Lugosi and Oliveira}]{joly2016estimation}
Joly, E., Lugosi, G. and Oliveira, R.~I. (2016) On the estimation of the mean
  of a random vector.
\newblock \textit{arXiv preprint arXiv:1607.05421}.

\bibitem[{Jordan(2013)}]{jordan2013statistics}
Jordan, M. (2013) On statistics, computation and scalability.
\newblock \textit{Bernoulli}, \textbf{19}, 1378--1390.

\bibitem[{K{\"a}rkk{\"a}inen and
  Ayr{\"a}m{\"o}(2005)}]{karkkainen2005computation}
K{\"a}rkk{\"a}inen, T. and Ayr{\"a}m{\"o}, S. (2005) On computation of spatial
  median for robust data mining.
\newblock \textit{Evolutionary and Deterministic Methods for Design,
  Optimization and Control with Applications to Industrial and Societal
  Problems, EUROGEN, Munich}.

\bibitem[{Kemperman(1987)}]{kemperman1987median}
Kemperman, J. (1987) The median of a finite measure on a {Banach} space.
\newblock \textit{Statistical data analysis based on the $L_1$-norm and related
  methods}, 217--230.

\bibitem[{Kuhn(1973)}]{kuhn1973note}
Kuhn, H.~W. (1973) A note on {Fermat's} problem.
\newblock \textit{Mathematical programming}, \textbf{4}, 98--107.

\bibitem[{Lee et~al.(2015)Lee, Sun, Liu and Taylor}]{lee2015communication}
Lee, J.~D., Sun, Y., Liu, Q. and Taylor, J.~E. (2015) Communication-efficient
  sparse regression: a one-shot approach.
\newblock \textit{arXiv preprint arXiv:1503.04337}.

\bibitem[{Lehmann and D'Abrera(2006)}]{lehmann2006nonparametrics}
Lehmann, E.~L. and D'Abrera, H.~J. (2006) \textit{Nonparametrics: statistical
  methods based on ranks}.
\newblock Springer New York.

\bibitem[{Lerasle and Oliveira(2011)}]{lerasle2011robust}
Lerasle, M. and Oliveira, R.~I. (2011) Robust empirical mean estimators.
\newblock \textit{arXiv preprint arXiv:1112.3914}.

\bibitem[{Li et~al.(2016)Li, Srivastava and Dunson}]{li2016simple}
Li, C., Srivastava, S. and Dunson, D.~B. (2016) Simple, scalable and accurate
  posterior interval estimation.
\newblock \textit{arXiv preprint arXiv:1605.04029}.

\bibitem[{Liang et~al.(2014)Liang, Balcan, Kanchanapally and
  Woodruff}]{liang2014improved}
Liang, Y., Balcan, M.-F.~F., Kanchanapally, V. and Woodruff, D. (2014) Improved
  distributed {Principal Component Analysis}.
\newblock In \textit{Advances in Neural Information Processing Systems},
  3113--3121.

\bibitem[{Lugosi and Mendelson(2017)}]{lugosi2017sub}
Lugosi, G. and Mendelson, S. (2017) {Sub-Gaussian} estimators of the mean of a
  random vector.
\newblock \textit{arXiv preprint arXiv:1702.00482}.

\bibitem[{Lugosi and Mendelson(2018)}]{lugosi2018near}
--- (2018) Near-optimal mean estimators with respect to general norms.
\newblock \textit{arXiv preprint arXiv:1806.06233}.

\bibitem[{Mcdonald et~al.(2009)Mcdonald, Mohri, Silberman, Walker and
  Mann}]{mcdonald2009efficient}
Mcdonald, R., Mohri, M., Silberman, N., Walker, D. and Mann, G.~S. (2009)
  Efficient large-scale distributed training of conditional maximum entropy
  models.
\newblock In \textit{Advances in Neural Information Processing Systems},
  1231--1239.

\bibitem[{Minsker et~al.(2014)Minsker, Srivastava, Lin and
  Dunson}]{minsker2014robust}
Minsker, S., Srivastava, S., Lin, L. and Dunson, D.~B. (2014) Robust and
  scalable {Bayes} via a median of subset posterior measures.
\newblock \textit{arXiv preprint arXiv:1403.2660}.

\bibitem[{Minsker(2015)}]{minsker2015geometric}
Minsker, S.~a. (2015) Geometric median and robust estimation in {Banach}
  spaces.
\newblock \textit{Bernoulli}, \textbf{21}, 2308--2335.

\bibitem[{Nemirovski and Yudin(1983)}]{Nemirovski1983Problem-complex00}
Nemirovski, A. and Yudin, D. (1983) \textit{Problem complexity and method
  efficiency in optimization}.
\newblock John Wiley \& Sons Inc.

\bibitem[{Ostresh(1978)}]{ostresh1978convergence}
Ostresh, L.~M. (1978) On the convergence of a class of iterative methods for
  solving the {W}eber location problem.
\newblock \textit{Operations Research}, \textbf{26}, 597--609.

\bibitem[{Overton(1983)}]{overton1983quadratically}
Overton, M.~L. (1983) A quadratically convergent method for minimizing a sum of
  {E}uclidean norms.
\newblock \textit{Mathematical Programming}, \textbf{27}, 34--63.

\bibitem[{Petrov(1995)}]{petrov1995limit}
Petrov, V.~V. (1995) \textit{Limit theorems of probability theory: sequences of
  independent random variables}.
\newblock Oxford, New York.

\bibitem[{Pinelis(2016)}]{pinelis2016optimal}
Pinelis, I. (2016) Optimal-order bounds on the rate of convergence to normality
  for maximum likelihood estimators.
\newblock \textit{arXiv preprint arXiv:1601.02177}.

\bibitem[{Pollard(2000)}]{pollardasymptopia}
Pollard, D. (2000) Asymptopia: an exposition of statistical asymptotic theory.
\newblock \textit{Available at
  \url{http://www.stat.yale.edu/~pollard/Books/Asymptopia}}.

\bibitem[{Rosenblatt and Nadler(2016)}]{rosenblatt2016optimality}
Rosenblatt, J.~D. and Nadler, B. (2016) On the optimality of averaging in
  distributed statistical learning.
\newblock \textit{Information and Inference}, \textbf{5}, 379--404.

\bibitem[{Scott et~al.(2016)Scott, Blocker, Bonassi, Chipman, George and
  McCulloch}]{scott2016bayes}
Scott, S.~L., Blocker, A.~W., Bonassi, F.~V., Chipman, H.~A., George, E.~I. and
  McCulloch, R.~E. (2016) Bayes and big data: the consensus {Monte Carlo}
  algorithm.
\newblock \textit{International Journal of Management Science and Engineering
  Management}, \textbf{11}, 78--88.

\bibitem[{Serfling(1981)}]{serfling1981approximation}
Serfling, R.~J. (1981) Approximation theorems of mathematical statistics.

\bibitem[{Shafieezadeh-Abadeh et~al.(2015)Shafieezadeh-Abadeh, Esfahani and
  Kuhn}]{shafieezadeh2015distributionally}
Shafieezadeh-Abadeh, S., Esfahani, P.~M. and Kuhn, D. (2015) Distributionally
  robust logistic regression.
\newblock In \textit{Advances in Neural Information Processing Systems},
  1576--1584.

\bibitem[{Shang and Cheng(2015)}]{shang2015bayesian}
Shang, Z. and Cheng, G. (2015) A {Bayesian} splitotic theory for nonparametric
  models.
\newblock \textit{arXiv preprint arXiv:1508.04175}.

\bibitem[{Shevtsova(2011)}]{shevtsova2011absolute}
Shevtsova, I. (2011) On the absolute constants in the {Berry-Esseen} type
  inequalities for identically distributed summands.
\newblock \textit{arXiv preprint arXiv:1111.6554}.

\bibitem[{Small(1990)}]{small1990survey}
Small, C. (1990) A survey of multidimensional medians.
\newblock \textit{International Statistical Review}, \textbf{58}, 263--277.

\bibitem[{Talagrand(2005)}]{talagrand2005generic}
Talagrand, M. (2005) \textit{The generic chaining}.
\newblock Springer.

\bibitem[{Tukey and Harris(1946)}]{tukey1946sampling}
Tukey, J. and Harris, T. (1946) Sampling from contaminated distributions.
\newblock \textit{Ann. Math. Statist}, \textbf{17501}.

\bibitem[{van~der Vaart(1998)}]{Vaart1998Asymptotic-stat00}
van~der Vaart, A.~W. (1998) \textit{Asymptotic statistics}, vol.~3 of
  \textit{Cambridge Series in Statistical and Probabilistic Mathematics}.
\newblock Cambridge: Cambridge University Press.

\bibitem[{van~der Vaart and Wellner(1996)}]{wellner1}
van~der Vaart, A.~W. and Wellner, J.~A. (1996) \textit{Weak convergence and
  empirical processes}.
\newblock Springer Series in Statistics. New York: Springer-Verlag.

\bibitem[{Vardi and Zhang(2000)}]{vardi2000multivariate}
Vardi, Y. and Zhang, C.-H. (2000) The multivariate ${L}_1$-median and
  associated data depth.
\newblock \textit{Proceedings of the National Academy of Sciences},
  \textbf{97}, 1423--1426.

\bibitem[{Weiszfeld(1936)}]{Weiszfeld1936Sur-un-probleme00}
Weiszfeld, E. (1936) Sur un probl{\`e}me de minimum dans l'espace.
\newblock \textit{Tohoku Mathematical Journal}.

\bibitem[{Yang and Lin(2015)}]{yang2015rsg}
Yang, T. and Lin, Q. (2015) Rsg: Beating subgradient method without smoothness
  and strong convexity.
\newblock \textit{arXiv preprint arXiv:1512.03107}.

\bibitem[{Zhang et~al.(2013)Zhang, Duchi and Wainwright}]{zhang2013divide}
Zhang, Y., Duchi, J. and Wainwright, M. (2013) Divide and conquer kernel ridge
  regression.
\newblock In \textit{Conference on Learning Theory}, 592--617.

\bibitem[{Zhang et~al.(2012)Zhang, Wainwright and
  Duchi}]{zhang2012communication}
Zhang, Y., Wainwright, M.~J. and Duchi, J.~C. (2012) Communication-efficient
  algorithms for statistical optimization.
\newblock In \textit{Advances in Neural Information Processing Systems},
  1502--1510.

\bibitem[{Zinkevich et~al.(2010)Zinkevich, Weimer, Li and
  Smola}]{zinkevich2010parallelized}
Zinkevich, M., Weimer, M., Li, L. and Smola, A.~J. (2010) Parallelized
  stochastic gradient descent.
\newblock In \textit{Advances in neural information processing systems},
  2595--2603.

\end{thebibliography}

\appendix 

\section{Central limit theorem for the non-i.i.d. data.}
\label{appendix:clt}

We present an extension of Theorem \ref{th:clt} to non-i.i.d. data for the estimator $\widehat\theta^{(k)}=\med{\bar\theta_1,\ldots,\bar\theta_k}$ that holds under the following assumptions:
\begin{enumerate}
\item $X_1,\ldots,X_N$ are independent, $\card(G_j) = n_j$, and $\sum_{j=1}^k n_j = k$;
\item Assumption \ref{ass:1} is satisfied with some $\{\sigma_n^{(j)}\}_{n\geq 1}$ and $g_j(n)$, $j=1,\ldots,k$;
\item $k\to\infty$ and $\max_{j=1,\ldots,k}\sqrt{k}\cdot g_j(n_j)\to 0$ as $N\to \infty$;
\item $\max_{j\leq k} \frac{H_k}{\sigma_{n_j}^{(j)}\sqrt{k}}\xrightarrow{N\to\infty} 0$, where $H_k:=\l( \frac{1}{k}\sum_{j=1}^k \frac{1}{\sigma_{n_j}^{(j)}} \r)^{-1}$ is the harmonic mean of $\sigma_{n_j}^{(j)}$'s.  
\end{enumerate}
\begin{theorem}
\label{th:clt2}
Under assumptions (a)-(e) above, 
\[
\sqrt{k}\,\frac{\widehat\theta^{(k)} - \theta_\ast}{H_k}\xrightarrow{d} N\l(0,\frac{\pi}{2}\r).
\]
\end{theorem}
\begin{proof}
Define $d_-(x) := I\l\{ x >0 \r\} - I\l\{ x\leq 0\r\}$, and $Y_{n_j,j} = d_- \l( \theta_\ast - \bar\theta_j + t \sqrt{\frac{\pi}{2}} \frac{H_k}{\sqrt k} \r)$. 
We will show that 
\begin{enumerate}
\item $\frac{1}{k}\sum_{j=1}^k \sqrt{k}\mb EY_{n_j,j}\to t$ as $N\to\infty$;
\item $\frac{1}{k}\sum_{j=1}^k \var(Y_{n_j,j}) \to 1$ as $N\to\infty$.
\end{enumerate}
To prove the first claim, first assume that $t\ne 0$ (for $t=0$ the argument follows the same line with simplifications), and observe that 
\begin{multline*}
\sqrt{k}\mb EY_{n_j,j} 
=\sqrt{k}
\l( \mb E d_-\l( \frac{\theta_\ast - \bar\theta_j}{\sigma_{n_j}^{(j)}} + t \sqrt{\frac{\pi}{2}} \frac{H_k}{\sigma_{n_j}^{(j)}\sqrt k} \r) - \mb E d_-\l( Z + t \sqrt{\frac{\pi}{2}} \frac{H_k}{\sigma_{n_j}^{(j)}\sqrt k} \r) \r) \\
 +  t \sqrt{\frac{\pi}{2}} \frac{H_k}{\sigma_{n_j}^{(j)}} \cdot \frac{1}{  t \sqrt{\frac{\pi}{2}} \frac{H_k}{\sigma_{n_j}^{(j)}\sqrt k}} \l( \mb E d_- \l( Z + t \sqrt{\frac{\pi}{2}} \frac{H_k}{\sigma_{n_j}^{(j)}\sqrt k} \r) - \underbrace{ \mb E d_-\l( Z \r)}_{=0} \r).
\end{multline*}
Moreover,  
\[
\l|\sqrt{k}
\l( \mb E d_-\l( \frac{\theta_\ast - \bar\theta_j}{\sigma_{n_j}^{(j)}} + t \sqrt{\frac{\pi}{2}} \frac{H_k}{\sigma_{n_j}^{(j)}\sqrt k} \r) - \mb E d_-\l( Z + t \sqrt{\frac{\pi}{2}} \frac{H_k}{\sigma_{n_j}^{(j)}\sqrt k} \r) \r)\r| 
\leq 2 g_j(n_j),
\]
while under assumption (d),
\[
\frac{1}{  t \sqrt{\frac{\pi}{2}} \frac{H_k}{\sigma_{n_j}^{(j)}\sqrt k}} \l( \mb E d_- \l( Z + t \sqrt{\frac{\pi}{2}} \frac{H_k}{\sigma_{n_j}^{(j)}\sqrt k} \r) - \underbrace{ \mb E d_-\l( Z \r)}_{=0} \r) \to \frac{2}{\sqrt{2\pi}} \text{ as } N\to\infty.
\]
\end{proof}
It then follows from assumption (c) that 
\[
\l| \frac{1}{k}\sum_{j=1}^k \sqrt{k}\mb EY_{n_j,j} -  t \, \underbrace{H_k \frac{1}{k}\sum_{j=1}^k \frac{1}{\sigma_{n_j}^{(j)}} }_{=1}\r| \to 0 \text{ as } N\to\infty.
\]
Claim (b) follows since $\mb E \l(Y_{n_j,j} \r)^2 = 1$ and $\max_{j\leq k}\mb EY_{n_j,j}\to 0$ under assumption (d). 

The rest of the argument repeats the proof of Theorem \ref{th:clt} for $\rho(x)=|x|$.

\section{Supplementary results.}
\label{sec:supplement}

\begin{lemma}
\label{lemma:normal}
Let $\m A\subset \mb R$ be symmetric, meaning that $\m A=-\m A$, and let $Z\sim N(0,1)$. Then for all $x\in \mb R$,
\[
\pr{Z\in A - x}\geq e^{-x^2/2}\pr{Z\in A}.
\]
\end{lemma}
\begin{proof}
Observe that
\begin{align*}
\pr{Z\in A} & = \int_\mb R I\{z\in A\} \frac{1}{\sqrt{2\pi}}e^{-z^2/2} dz = e^{x^2/2}\int_\mb R I\{z\in A\} e^{-xz/2}e^{xz/2}\frac{1}{\sqrt{2\pi}}e^{-z^2/2} e^{-x^2/2} dz 
\\
& \leq e^{x^2/2} \sqrt{ \int_\mb R I\{z\in A\} \frac{1}{\sqrt{2\pi}}e^{-(z-x)^2/2}dz}\sqrt{ \int_\mb R I\{z\in A\} \frac{1}{\sqrt{2\pi}}e^{-(z+x)^2/2}dz} 
\\
& =  e^{x^2/2} \int_\mb R I\{z\in A\} \frac{1}{\sqrt{2\pi}}e^{-(z-x)^2/2}dz = e^{x^2/2}\,\pr{Z\in A - x},
\end{align*}
and the claim follows.
\end{proof}

\begin{lemma}
\label{lemma:tanh}
Inequality $\tanh(x)\geq x \l( \frac{1+x}{1+x+x^2} \r)$ holds for all $x\geq 0$. 
Moreover, if $\tanh(x)\leq 1/2$ and $x\geq 0$, then $\tanh(x)\geq 0.83x$.
\end{lemma}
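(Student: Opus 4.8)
The plan is to treat the two assertions separately, deriving the second from the first.

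For the first inequality, since $\cosh x > 0$ and $1+x+x^2 > 0$, the claim $\tanh(x) \geq \frac{x(1+x)}{1+x+x^2}$ is equivalent to the polynomial-weighted inequality
\[
g(x) := (1+x+x^2)\sinh x - (x+x^2)\cosh x \geq 0, \qquad x \geq 0.
\]
First I would note that $g(0)=0$ and then differentiate. A direct computation gives $g'(x) = (1+x-x^2)\sinh x + (x^2-x)\cosh x$, and using $\sinh x - \cosh x = -e^{-x}$ this collapses to the compact form $g'(x) = \sinh x - x(1-x)e^{-x}$. Multiplying by $2e^x > 0$ and using $2e^x\sinh x = e^{2x}-1$, the sign of $g'$ is governed by whether $e^{2x} \geq 1 + 2x - 2x^2$. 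This last inequality is immediate from the elementary bound $e^{2x} \geq 1+2x$ together with $1+2x \geq 1+2x-2x^2$. Hence $g' \geq 0$ on $[0,\infty)$, so $g$ is nondecreasing and $g(x) \geq g(0) = 0$, which is the first claim.

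For the second assertion, I would first translate the hypothesis $\tanh(x) \leq 1/2$ into a bound on $x$: since $\tanh$ is increasing, this forces $x \leq \operatorname{artanh}(1/2) = \tfrac{1}{2}\ln 3 \approx 0.5494$. On this range I would invoke the first part, which reduces the target $\tanh(x) \geq 0.83\,x$ to showing $\frac{x(1+x)}{1+x+x^2} \geq 0.83\,x$, i.e. (dividing by $x>0$) $\frac{1+x}{1+x+x^2} \geq 0.83$. Clearing denominators, this is the quadratic inequality $0.83\,x^2 - 0.17\,x - 0.17 \leq 0$, whose positive root is $\approx 0.566$. Since $\tfrac{1}{2}\ln 3 < 0.566$, the quadratic inequality holds throughout $[0,\tfrac{1}{2}\ln 3]$, completing the proof.

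The routine part is the bookkeeping in the two algebraic reductions; the only place demanding a little care is the simplification of $g'(x)$ and the recognition that its positivity follows from the crude bound $e^{2x} \geq 1+2x$ rather than from anything delicate. The main (and very mild) obstacle is the numerical margin in the second part, namely checking that the relevant quadratic root strictly exceeds $\tfrac{1}{2}\ln 3$, so that the constant $0.83$ is admissible on the entire interval where $\tanh(x) \leq 1/2$.
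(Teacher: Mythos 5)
Your proof is correct, and its overall architecture matches the paper's: prove the rational lower bound $\tanh(x)\geq x\frac{1+x}{1+x+x^2}$ first, then deduce the second claim by restricting to $x\leq\frac{\ln 3}{2}$ and checking that the rational factor stays above $0.83$ there. The difference lies in how the first inequality is established. The paper does it in one line: writing $\tanh(x)=1-\frac{2}{1+e^{2x}}$ and invoking $e^{2x}\geq 1+2x+2x^2$ (the second-order Taylor bound) gives $\tanh(x)\geq 1-\frac{1}{1+x+x^2}=x\frac{1+x}{1+x+x^2}$ directly, with no calculus. You instead clear denominators to form $g(x)=(1+x+x^2)\sinh x-(x+x^2)\cosh x$, differentiate, and show $g'\geq 0$ from the cruder first-order bound $e^{2x}\geq 1+2x$; your algebra (the simplification $g'(x)=\sinh x - x(1-x)e^{-x}$ and the reduction to $e^{2x}\geq 1+2x-2x^2$) checks out. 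Your route trades the stronger exponential inequality for a monotonicity argument, which is a reasonable exchange but strictly longer. For the second claim the two arguments are essentially equivalent: the paper notes $f(x)=\frac{1+x}{1+x+x^2}$ is decreasing and evaluates near $x=0.55$, while you locate the positive root ($\approx 0.566$) of the quadratic $0.83x^2-0.17x-0.17$ and verify it exceeds $\frac{1}{2}\ln 3\approx 0.549$; both come down to the same numerical margin, which you handle correctly.
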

\begin{proof}
Since $e^x\geq 1+x+\frac{x^2}{2}$ for all $x\geq 0$,
\begin{align*}
&
\tanh(x)=1 - \frac{2}{1+e^{2x}} \geq 1 - \frac{1}{1+x+x^2} = x \l( \frac{1+x}{1+x+x^2} \r).
\end{align*}
Note that $f(x)=\frac{1+x}{1+x+x^2}$ is decreasing on $[0,\infty)$. 
Whenever $\tanh(x)\leq 1/2$, $x\leq \frac{\log 3}{2}\leq 0.55$, hence 
$\tanh(x)\geq 0.83x$.
\end{proof}

\section{Results for the spatial median with respect to the $\|\cdot\|_2$ norm.}
\label{sec:l2-median}

In this section, we discuss estimation of the multivariate parameter $\theta_\ast \in \mb R^m$ based on the $L_2$-median. Let $X_1,\ldots,X_N\in \mb R^d$ be i.i.d. copies of $X$ randomly partitioned into disjoint groups $G_1,\ldots,G_k$ of cardinality $n\geq\lfloor N/k \rfloor$ each, and let $\bar\theta_j:=\bar\theta_j(G_j)\in \mb R^m, \ 1\leq j\leq k$ be a sequence of i.i.d. estimators of $\theta_\ast$. We define
\begin{align}
\label{eq:median}
\widehat\theta^{(k)}=\medg{\bar\theta_1,\ldots,\bar\theta_k}:=\argmin_{z\in \mb R^m}\sum_{j=1}^k \l\| z - \bar \theta_j \r\|_2
\end{align}
be the $L_2$ median of $\bar\theta_1,\ldots,\bar\theta_k$. 

Let $Z\in \mb R^m$ have multivariate normal distribution $N(0,\Sigma)$, and define $\Phi_\Sigma(A):=\pr{Z\in A}$ for a Borel measurable set $A\subseteq \mb R^m$. 
Moreover, define $\m S$ to be the set of closed cones, 
\begin{align}
\label{eq:cones}
&
\m S_m = \l\{ C_u(t;b) = \l\{ x\in \mb R^m: \dotp{x-b}{u}\geq t\|x-b\|_2 \r\}, \ \|u\|_2=1 \ ,b\in \mb R^m, \ 0\leq t\leq 1 \r\}.
\end{align}
We will assume that $\bar\theta_1$ is ``asymptotically normal on cones'':
\begin{assumption}
\label{ass:2}
There exists a sequence $\{\sigma_n\}_{n\in \mb N}\subset \mb R_+$ and a positive-definite matrix $\Sigma$ such that $\l\| \Sigma\r\| \leq 1$ and
\[
g_{\m S_m}(n):=\sup_{S\in \m S_m}\l| \pr{ \frac{1}{\sigma_n}\l(\bar\theta_1- \theta_\ast \r)\in S} - \Phi_\Sigma( S ) \r| \to 0 \text{ as }n\to\infty.
\]
\end{assumption}

\begin{theorem}
\label{th:main2}
Let assumption \ref{ass:2} be satisfied. Then with probability $\geq 1 - e^{-2s}$,
\begin{align}
\label{eq:mvar}
\tanh \l( \frac{1}{\sigma_n}\l\| \widehat \theta^{(k)} - \theta_\ast \r\|_2 \r) \leq 
26.8 \l\| \Sigma^{-1/2}\r\|
\l( \frac{C_1(m)}{\sqrt k} + C_2(m)\l( \sqrt{\frac{s}{4k}} + g_{\m S_m}(n) \r) \r),
\end{align}
where 
\[
C_1(m)=6\sqrt{\log 4e^{5/2}}(m+4)\sqrt{m+2\sqrt{(m-1)\ln 4}}
\] 
and $C_2(m)=\sqrt{m+2\sqrt{(m-1)\ln 4}}$.
\end{theorem}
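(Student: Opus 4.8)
The plan is to reduce the multivariate estimate to a one-dimensional analysis along the displacement direction $u := (\widehat\theta^{(k)} - \theta_\ast)/\|\widehat\theta^{(k)} - \theta_\ast\|_2$, exploiting the first-order optimality condition for the geometric median \eqref{eq:median}. Writing $\rho := \|\widehat\theta^{(k)} - \theta_\ast\|_2$ and $r := \rho/\sigma_n$, I would start from the subgradient characterization of the minimizer: there exist $v_j$ with $\|v_j\|_2 \le 1$ such that $\sum_{j:\bar\theta_j \neq \widehat\theta^{(k)}} \frac{\widehat\theta^{(k)} - \bar\theta_j}{\|\widehat\theta^{(k)} - \bar\theta_j\|_2} + \sum_{j:\bar\theta_j = \widehat\theta^{(k)}} v_j = 0$. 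Projecting this identity onto $u$ and setting $h_u(x) := \dotp{\widehat\theta^{(k)} - x}{u}/\|\widehat\theta^{(k)} - x\|_2 \in [-1,1]$ shows that the empirical average $\frac1k\sum_j h_u(\bar\theta_j)$ is at most the fraction of ties $\#\{j: \bar\theta_j = \widehat\theta^{(k)}\}/k$, which vanishes almost surely for absolutely continuous local estimators.

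The next step records that the super- and sub-level sets of $h_u$ are exactly cones: $\{x : h_u(x) \ge t\} = C_{-u}(t; \widehat\theta^{(k)})$ and $\{x : h_u(x) \le -t\} = C_u(t; \widehat\theta^{(k)})$, both members of $\m S_m$. Via the layer-cake representation $h_u = \int_0^1(\mathbf 1[h_u \ge t] - \mathbf 1[h_u \le -t])\,dt$, I rewrite the expectation of $h_u$ under any measure as an integral of signed cone probabilities, so that the Gaussian-on-cones control of Assumption \ref{ass:2} applies termwise and \emph{uniformly} over the data-dependent vertex $\widehat\theta^{(k)}$ and direction $u$; this uniformity is precisely why $\m S_m$ is indexed by arbitrary $b$ and $u$.

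The crux is a lower bound on the Gaussian functional $\psi(r) := \E_{Y\sim N(0,\Sigma)} h_u(\theta_\ast + \sigma_n Y) = \E_Y \frac{r - \dotp{Y}{u}}{\|r u - Y\|_2}$, for which I would establish $\psi(r) \ge \frac{1}{\|\Sigma^{-1/2}\|}\tanh(r)$ uniformly in the unit vector $u$. The function $\tanh$ is the natural comparison: $\psi(0)=0$ by symmetry, $\psi$ increases to $1$ as $r\to\infty$, and its slope at the origin is governed by the least-variance direction of $\Sigma$, which produces the factor $\|\Sigma^{-1/2}\| = \lambda_{\min}(\Sigma)^{-1/2}$ in the worst case. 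I would obtain this by conditioning on the perpendicular radius $\|Y - \dotp{Y}{u}u\|_2$, bounding it on the high-probability event $\{\|Y\|_2 \le C_2(m)\}$ (a $\chi^2$-tail giving the radius $C_2(m) = \sqrt{m + 2\sqrt{(m-1)\ln 4}}$), and estimating the resulting one-dimensional integral. I expect this analytic lower bound, with explicit constants and the correct $\Sigma$-dependence, to be the main obstacle.

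It remains to control $\frac1k\sum_j h_u(\bar\theta_j)$ uniformly over $u$. I would decompose $\E_{\Phi_\Sigma}h_u - \frac1k\sum_j h_u(\bar\theta_j)$ into a bias term, bounded by $C_2(m)\,g_{\m S_m}(n)$ through Assumption \ref{ass:2} and the layer-cake identity, and a fluctuation term $\sup_{u}\l|\frac1k\sum_j h_u(\bar\theta_j) - \E h_u\r|$, where the expectation is taken under the common law of the $\bar\theta_j$. Since $|h_u|\le 1$ and $u \mapsto h_u$ is Lipschitz, a bounded-differences inequality yields a deviation $C_2(m)\sqrt{s/(4k)}$ with probability at least $1 - e^{-2s}$, while a Dudley/Rademacher bound on the sphere-indexed class $\{h_u\}$ controls the expected supremum by $C_1(m)/\sqrt k$; the explicit entropy integral over the unit sphere accounts for $C_1(m) = 6\sqrt{\log 4e^{5/2}}(m+4)\sqrt{m + 2\sqrt{(m-1)\ln 4}}$. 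Combining the optimality inequality (left side at most the tie fraction, hence $\approx 0$) with $\tanh(r)/\|\Sigma^{-1/2}\| \le \psi(r)$ and these two stochastic bounds gives $\tanh(r)/\|\Sigma^{-1/2}\| \le C_1(m)/\sqrt k + C_2(m)\l(\sqrt{s/(4k)} + g_{\m S_m}(n)\r)$ up to an absolute constant, and collecting the numerical losses from the $\tanh$-comparison, the layer-cake, and the truncation into the factor $26.8$ produces \eqref{eq:mvar}.
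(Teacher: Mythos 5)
Your skeleton---first-order optimality of the geometric median projected onto a direction, the layer-cake reduction of the projected functional to cone probabilities, a bias/fluctuation/Gaussian three-term decomposition, and a $\tanh$ lower bound for the Gaussian term---is exactly the paper's (its equations \eqref{eq:main-0}--\eqref{eq:15} and Lemmas \ref{lemma:0}--\ref{lemma:2}). The first genuine gap is at your self-identified crux. You project onto the raw displacement direction $u=(\widehat\theta^{(k)}-\theta_\ast)/\|\widehat\theta^{(k)}-\theta_\ast\|_2$ and propose to prove $\psi(r)\geq \tanh(r)/\|\Sigma^{-1/2}\|$ by ``conditioning on the perpendicular radius.'' For anisotropic $\Sigma$ and a generic $u$, the parallel component $\dotp{Y}{u}$ and the perpendicular component $Y-\dotp{Y}{u}u$ of $Y\sim N(0,\Sigma)$ are \emph{correlated}: conditionally on $Y_\perp$, the law of $\dotp{Y}{u}$ is a Gaussian whose mean is a nonzero linear function of $Y_\perp$ and whose variance is not $1$. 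The one-dimensional integral you are left with is therefore not the centered standard one; the effective displacement $r-\E\l[\dotp{Y}{u}\,\big|\,Y_\perp\r]$ is negative with substantial probability, and the reflection identity that generates the $\tanh$ in the paper, $\phi_1(t-\kappa)-\phi_1(t+\kappa)\geq\tanh(t\kappa)\,\phi_1(t-\kappa)$, is unavailable. This is precisely the difficulty the paper engineers away by evaluating the optimality condition not at the displacement direction but at $u=-\Sigma^{-1}\widehat\mu^{(k)}/\|\Sigma^{-1}\widehat\mu^{(k)}\|_2$ (legitimate, since optimality holds in every direction): after the change of variables $x=\Sigma^{1/2}z$, this choice aligns the projection direction with the whitened displacement $\Sigma^{-1/2}\widehat\mu^{(k)}$, a rotation reduces the integral to independent standard normal coordinates, and the explicit computation goes through, with $\|\Sigma^{1/2}u\|_2\geq 1/\|\Sigma^{-1/2}\|$ supplying the condition-number factor. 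Without this idea your analogue of Lemma \ref{lemma:2} is unproven, and the derivation you sketch for it breaks at the conditioning step.

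The second gap is the fluctuation bound. You control $\sup_{u}\l|\frac1k\sum_j h_u(\bar\theta_j)-\E h_u\r|$ over the sphere of directions with the vertex held at $\widehat\theta^{(k)}$. But $\widehat\theta^{(k)}$ is a function of the entire sample, so $\{h_u\}$ is a random, data-dependent class: changing one $\bar\theta_j$ perturbs every $h_u$ through the vertex, so the bounded-difference inequality does not apply with increments $1/k$, and ``$\E h_u$'' is not even a fixed quantity. The paper avoids this by passing (via the same layer-cake identity) to $\sup_{A\in\m S_m}\l|\Phi_k^{(n)}(A)-\Phi^{(n)}(A)\r|$, a supremum over the \emph{deterministic} class of all cones with arbitrary vertex $b$, direction $u$ and aperture $t$; the bounded-difference inequality then applies, and the expected supremum is bounded by Dudley's entropy integral using the VC dimension of intersections of half-spaces with level sets of degree-two polynomials, which is of order $m^2$. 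That computation is the source of the factor $(m+4)$ in $C_1(m)$. The metric entropy of the unit sphere alone is of order $m\log(1/\eps)$ and would produce a bound of order $\sqrt m/\sqrt k$, not $(m+4)/\sqrt k$, so the constant you quote cannot come out of the argument you describe; it appears only once the supremum is taken over the full cone class. (A minor further point: your treatment of ties assumes absolutely continuous $\bar\theta_j$, an assumption absent from the theorem; the paper's convention $f_{u,b}(b)=1$ in the directional derivative makes the optimality inequality hold without it.)
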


\begin{remark}
\label{remark:2}
It follows from Lemma \ref{lemma:tanh} that whenever the right-hand side of the inequality \eqref{eq:mvar} is bounded by $1/2$, 
$\tanh \l( \frac{1}{\sigma_n}\l\| \widehat \theta^{(k)} - \theta_\ast \r\|_2 \r)\geq  \frac{0.83}{\sigma_n}\l\| \widehat \theta^{(k)} - \theta_\ast \r\|_2$, which leads to a more explicit bound for $\l\| \widehat \theta^{(k)} - \theta_\ast \r\|_2$.
\end{remark}

As an example, we consider the problem of the multivariate mean estimation. 
Recall that the condition number $\mathrm{cond}(A)$ of a non-singular matrix $A$ is defined as $\mathrm{cond}(A)=\|A\|\,\|A^{-1}\|$.  
\begin{corollary}
\label{corollary:med-of-means2}
Let $X_1,\ldots,X_N$ be a sequence of i.i.d. copies of a random vector $X\in\mb R^d$ such that $\mb EX = \theta_\ast$, 
$\mb E\l[(X-\theta_\ast)(X-\theta_\ast)^T\r]=\widetilde \Sigma$, and $\mb E\| X-\theta_\ast \|_2^3<\infty$. 
Define 
\[
\hat\theta^{(k)}=\medg{\bar\theta_1,\ldots,\bar\theta_k}.
\]
Assume that $s>0$ and $k\leq N/2$ are such that
\[
\mathrm{cond}(\widetilde \Sigma^{1/2})
\l( \frac{C_1(d)}{\sqrt k} + C_2(d)\l( \sqrt{\frac{s}{4k}} + \frac{400 d^{1/4} \mb E \l\| \widetilde\Sigma^{-1/2}(X-\theta_\ast) \r\|_2^3 }{\sqrt{n}} \r) \r)\leq 0.037.
\] 
Then
\begin{align*}
\Big\| \widehat \theta^{(k)} - \theta_\ast \Big\|_2 \leq & 32.4 \| \widetilde \Sigma^{1/2} \| \, \mathrm{cond}(\widetilde \Sigma^{1/2})\\
&
\times
\l( \frac{C_1(d)}{\sqrt {k n}} + C_2(d)\l( \sqrt{\frac{s}{4kn}} + \frac{400 d^{1/4} \mb E \l\| \widetilde\Sigma^{-1/2}(X-\theta_\ast) \r\|_2^3 }{n} \r) \r)
\end{align*}
with probability $\geq 1 - e^{-2s}$, where $C_1(d)$ and $C_2(d)$ are the same as in Theorem \ref{th:main2}.
\end{corollary}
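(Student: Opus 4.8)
The plan is to derive the corollary as a direct specialization of Theorem \ref{th:main2} with $m=d$, in exactly the way Corollary \ref{corollary:med-of-means1} was obtained from Theorem \ref{th:main1}. The only genuine work is to verify Assumption \ref{ass:2} for the sample-mean local estimators $\bar\theta_j$ and to produce an explicit bound on $g_{\m S_d}(n)$; everything else is substitution and constant-tracking. First I would fix the scaling. Since $\bar\theta_1$ is an average of $n$ i.i.d.\ copies of $X$, the vector $\bar\theta_1-\theta_\ast$ has covariance $\widetilde\Sigma/n$. Writing this as $\sigma_n^2\Sigma$ with $\|\Sigma\|\leq 1$ forces the choice $\sigma_n=\|\widetilde\Sigma^{1/2}\|/\sqrt n$ and $\Sigma=\widetilde\Sigma/\|\widetilde\Sigma\|$, so that $\|\Sigma\|=1$ and, crucially, $\|\Sigma^{-1/2}\|=\|\widetilde\Sigma^{1/2}\|\,\|\widetilde\Sigma^{-1/2}\|=\mathrm{cond}(\widetilde\Sigma^{1/2})$; this is the origin of the condition number appearing in the statement.

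To bound $g_{\m S_d}(n)$ I would whiten the summands. Setting $Y_i=\widetilde\Sigma^{-1/2}(X_i-\theta_\ast)$ gives i.i.d.\ vectors with mean zero and identity covariance, and a short computation shows $\Sigma^{-1/2}\frac{1}{\sigma_n}\l(\bar\theta_1-\theta_\ast\r)=\frac{1}{\sqrt n}\sum_{i\in G_1}Y_i$. Since $\Phi_\Sigma(S)=\Phi_I\l(\Sigma^{-1/2}S\r)$, the deviation $\left|\pr{\sigma_n^{-1}(\bar\theta_1-\theta_\ast)\in S}-\Phi_\Sigma(S)\right|$ equals the Gaussian-approximation error for $\frac{1}{\sqrt n}\sum Y_i$ evaluated on the set $\Sigma^{-1/2}S$. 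Each cone $C_u(t;b)$ is convex, and linear images of convex sets are convex, so $\Sigma^{-1/2}S$ is convex; hence the supremum over $\m S_d$ is dominated by the supremum over \emph{all} convex sets, and the multivariate Berry--Esseen theorem (Bentkus) yields $g_{\m S_d}(n)\leq 400\,d^{1/4}\,\mb E\l\|\widetilde\Sigma^{-1/2}(X-\theta_\ast)\r\|_2^3/\sqrt n$, precisely the third-moment term in the statement.

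With $\sigma_n$, $\|\Sigma^{-1/2}\|$ and $g_{\m S_d}(n)$ in hand, Theorem \ref{th:main2} delivers the $\tanh$ inequality \eqref{eq:mvar} with probability $\geq 1-e^{-2s}$. Under the smallness hypothesis $\mathrm{cond}(\widetilde\Sigma^{1/2})\l(\ldots\r)\leq 0.037$ the right-hand side of \eqref{eq:mvar} stays in the regime where Lemma \ref{lemma:tanh} applies (cf.\ Remark \ref{remark:2}), so I can replace $\tanh\l(\sigma_n^{-1}\|\widehat\theta^{(k)}-\theta_\ast\|_2\r)$ by a fixed constant multiple of $\sigma_n^{-1}\|\widehat\theta^{(k)}-\theta_\ast\|_2$; multiplying through by $\sigma_n=\|\widetilde\Sigma^{1/2}\|/\sqrt n$ converts the factors $1/\sqrt k$, $\sqrt{s/4k}$ and $g_{\m S_d}(n)$ into the claimed $1/\sqrt{kn}$, $\sqrt{s/4kn}$ and $1/n$ scalings and produces the prefactor $32.4$. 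I expect the main obstacle to be the Berry--Esseen step: one must control the Gaussian approximation uniformly over the entire cone class $\m S_d$ against a \emph{non-identity} limiting covariance, and it is exactly the whitening-plus-convexity reduction above that makes the sharp $d^{1/4}$-rate bound applicable; verifying the numerical thresholds in the $\tanh$ inversion and collecting the constants is then routine.
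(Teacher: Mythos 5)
Your proposal is correct and follows essentially the same route as the paper's own proof: verify Assumption \ref{ass:2} via the multivariate Berry--Esseen bound (fact \ref{th:MBE}) with $\sigma_n=\sqrt{\|\widetilde\Sigma\|/n}$ and $\Sigma=\widetilde\Sigma/\|\widetilde\Sigma\|$, use the identity $\|\Sigma^{-1/2}\|=\mathrm{cond}(\widetilde\Sigma^{1/2})$, and then invert the $\tanh$ bound of Theorem \ref{th:main2} via Remark \ref{remark:2}. The only difference is that you spell out details the paper leaves implicit --- the whitening reduction and the convexity of the cones $C_u(t;b)$, which is exactly what makes Bentkus' convex-set bound applicable to the class $\m S_d$ --- so the arguments are substantively identical.
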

\begin{proof}
It follows from the multivariate Berry-Esseen bound (fact \ref{th:MBE}) that assumption \ref{ass:2} is satisfied with 
$\sigma_n=\sqrt{\frac{\| \widetilde\Sigma\|}{n}}$, $\Sigma=\frac{\widetilde \Sigma}{\| \widetilde \Sigma \|}$ and 
$g_{\m S_d}(n)=\frac{400 d^{1/4} \mb E \l\| \widetilde\Sigma^{-1/2}X \r\|_2^3 }{\sqrt{n}}$. 
Noting that $\| \Sigma^{-1/2} \|=\| \widetilde \Sigma^{1/2} \| \, \| \widetilde \Sigma^{-1/2} \|=\mathrm{cond}(\widetilde \Sigma^{1/2})$, it is easy to deduce the bound from \eqref{eq:mvar} and remark \ref{remark:2}.
\end{proof}
\begin{remark}
\label{remark:3}
Note that, similarly to the case $d=1$, whenever $k\lesssim\sqrt{N}$ (hence, $n\gtrsim \sqrt{N}$), the bound of Corollary \ref{corollary:med-of-means2} is of order $N^{-1/2}$ with respect to the sample size $N$. 
However, dependence of the bound on the dimension factor $d$ is suboptimal. 
\end{remark}

\subsection{Overview of numerical algorithms.}
\label{sec:numerical} 

Letting $x_1,\ldots,x_k\in\mathbb{R}^d$, $F(z):=\sum\limits_{j=1}^k \|z-x_j\|$ is convex and it achieves its minimum at a unique point (unless 
$\{x_1,\ldots,x_k\}$ are on the same line \citep{kemperman1987median}) that belongs to the convex hull of $x_1,\ldots,x_k$. 

The classical algorithm that approximates $\argmin_{z\in \mb H} F(z)$ is the famous \textit{Weiszfeld's algorithm} \citep{Weiszfeld1936Sur-un-probleme00}: starting from some $z_0$ in the affine hull of $\{x_1,\ldots,x_k\}$, iterate
\begin{align}
\label{weizsfeld}
z_{m+1}=\sum_{j=1}^k \alpha^{(j)}_{m+1}\, x_j,
\end{align}
where $\alpha^{(j)}_{m+1}=\frac{\|x_j-z_m\|_\mb H^{-1}}{\sum\limits_{j=1}^k \|x_j-z_m\|_\mb H^{-1}}$. 
H. W. Kuhn \citep{kuhn1973note} showed that Weiszfeld's algorithm converges to the geometric median for all but countably many initial points. 
It is easy to check that (\ref{weizsfeld}) is a gradient descent scheme: indeed, it is equivalent to
\[
z_{m+1}=z_m-\beta_{m+1}g_{m+1},
\]
where $\beta_{m+1}=\frac{1}{\sum\limits_{j=1}^k\|x_j-z_m\|_\mb H^{-1}}$ and 
$g_{m+1}=\sum\limits_{j=1}^k \frac{z_m-x_j}{\|z_m-x_j\|_\mb H}$ is the gradient of $F$ (we assume here that $z_m\notin\{x_1,\ldots,x_k\}$). 

Various improvements and accelerated versions of Weiszfeld's algorithm have been proposed and analyzed.  \citet{ostresh1978convergence} provides a modified version of Weiszfeld's algorithm that converges to the geometric median under reasonable initialization conditions, but the rate of convergence is not specified. \citet{karkkainen2005computation} consider empirical behavior of several modifications of Weiszfeld's algorithm, and obtains convergence for an SOR method. 
\citet{vardi2000multivariate} demonstrate convergence of another modified Weiszfeld algorithm, but only provides empirical convergence rates. \citet{overton1983quadratically} provides an algorithm that exhibits quadratic convergence under some assumptions, but a quantitative rate is not expressed. \citet{cardot2013efficient} develops an online stochastic descent algorithms and provides an asymptotic convergence rate. Quantitative error bounds are not available for any of the algorithms discussed so far.

Literature from computer science considers the computational complexity of algorithms for computing $\widetilde{\theta}^{(k)}$ such that $F(\widetilde{\theta}^{(k)})$ is close to the minimum value $F(\widehat{\theta}^{(k)})$. A thorough comparison of such results is provided by \citet{cohen2016geometric}. 
The results from this work are fully quantitative, but they need to be adapted to our setting. In our statistical estimation setting, we are using $\widehat{\theta}^{(k)}$ to estimate the true parameter $\theta^\ast$, so we want bounds on the proximity $\Vert \widetilde{\theta}^{(k)}-\widehat{\theta}^{(k)}\Vert$ instead of bounds on $F(\widetilde{\theta}^{(k)})$. The following theorem (proven in Section \ref{proof:lowlip}) provides a ``local lower bound."

\begin{theorem}
\label{thm:lowlip}
Suppose $\{x_i\}_{i=1}^k$, let $\overline{x}=\frac{1}{k}\sum_{i=1}^kx_i$, set $m_t=\frac{1}{k}\sum_{i=1}^k\| x_i-\overline{x}\|^t$ for $t=1, 2, 3$, and assume that the empirical covariance matrix $\widehat{\Sigma}=\frac{1}{k}\sum_{i=1}^k (x_i-\overline{x})(x_i-\overline{x})^T$ satisfies
\[
a:=\frac{1}{k}\sum_{j=2}^d\lambda_j(\widehat{\Sigma})>0
\]
where $\lambda_j(\widehat{\Sigma})$ are the eigenvalues of $\widehat{\Sigma}$ listed with multiplicity and in non-increasing order. Then, for all $\theta\in\mb{R}^d$, 
\[
\frac{1}{k}(F(\theta)-F(\widehat{\theta}^{(k)}))\geq\frac{1}{2} \frac{a\|\theta-\widehat{\theta}^{(k)}\|^2}{b^2(\|\theta-\widehat{\theta}^{(k)}\|+b)}
\]
where 
\[
b = \frac{20m_1^3+6m_1m_2+m_3}{a}.
\]
\end{theorem}

Theorem \ref{thm:lowlip} allows us to infer proximity bounds from all the computer science literature that discusses value bounds. Moreover, this bound is asymptotically stable in the i.i.d. sampling setting assuming the existence of three moments. For small $\|\theta-\widehat{\theta}^{(k)}\|$, the lower bound is approximately quadratic, and hence the proximity bound behaves like $\sqrt{F(\theta)-F(\widehat{\theta}^{(k)})}$. On the other hand, this local lower bound fits in well with the theory of Restarted Gradient Descent \citep{yang2015rsg}. 

\section{Proofs of results in Appendix \ref{sec:l2-median}.}
\label{sec:proofs-l2}

\subsection{Technical background.}

Everywhere below, $\Phi_\Sigma$ stands for the distribution of the normal vector with mean 0 and covariance matrix $\Sigma$. 
The following multivariate version of the Berry-Esseen Theorem for convex sets has been established by \citet{bentkus2003dependence}. 
\begin{fact}[Multivariate Berry-Esseen bound]
\label{th:MBE}
Assume that $Y_1,\ldots,Y_n$ is a sequence of i.i.d. copies of a random vector $Y\in \mb R^d$ with mean $\mu$, covariance matrix $\Sigma\succ 0$ and such that $\mb E\|Y\|_2^3<\infty$. 
Let $Z$ have normal distribution $N(0,\Sigma)$, and $\m A$ be the class of all convex subsets of $\mb R^d$. 
Then 
\[
\sup_{A\in \m A} \l| \pr{\sqrt{n}(\bar Y - \mu)\in A } - \Phi_\Sigma(A) \r| \leq \frac{400 d^{1/4} \mb E \l\| \Sigma^{-1/2}Y \r\|_2^3 }{\sqrt{n}},
\]
where $\bar Y = \frac{1}{n}\sum_{j=1}^n Y_j$. 
\end{fact}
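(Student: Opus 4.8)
The plan is to establish this dimension-dependent Berry--Esseen bound by a Lindeberg replacement argument applied to a smoothed indicator of a convex set, with the dimensional factor $d^{1/4}$ supplied by a bound on the Gaussian surface area of convex bodies. First I would reduce to the isotropic case. Applying the affine map $y\mapsto \Sigma^{-1/2}(y-\mu)$ sends each $Y_j$ to a centered vector with identity covariance, turns $\Phi_\Sigma$ into the standard Gaussian $\Phi_I$, and carries convex sets to convex sets; since this is a bijection on the class $\mathcal A$ of convex sets, the supremum is unchanged. This reduction is exactly what produces the normalized third moment $\mathbb E\|\Sigma^{-1/2}Y\|_2^3$, so from now on I may assume $\mu=0$, $\Sigma=I$, and must control $\sup_{A\in\mathcal A}|\mathbb P(S_n\in A)-\Phi_I(A)|$ for $S_n=\frac{1}{\sqrt n}\sum_{j=1}^n Y_j$.

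Next I would replace the indicator $\mathbf{1}_A$ of a convex set by a smooth surrogate. Convolving the indicators of the inflation $A^\epsilon$ and deflation $A^{-\epsilon}$ with a Gaussian kernel of width $\epsilon$ yields functions $f_\epsilon^{\pm}$ that sandwich $\mathbf{1}_A$, agree with it outside the shell $(\partial A)^\epsilon$, and satisfy $\|D^3 f_\epsilon^{\pm}\|_\infty\lesssim \epsilon^{-3}$. The cost of this smoothing is controlled by the standard Gaussian mass of the boundary shell, and here lies the crucial geometric input: $\sup_{A\in\mathcal A}\Phi_I\big((\partial A)^\epsilon\big)\lesssim d^{1/4}\epsilon$, i.e. the Gaussian surface area of an arbitrary convex body in $\mathbb R^d$ grows no faster than $d^{1/4}$. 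This single estimate is the source of the factor $d^{1/4}$ in the final bound.

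With the smoothing fixed, I would run the Lindeberg (or, equivalently, Stein) replacement on $\mathbb E f_\epsilon^{\pm}(S_n)$: swapping the summands one at a time for independent Gaussians and expanding $f_\epsilon^{\pm}$ to third order, the zeroth-, first-, and second-order terms cancel because the exchanged variables match in mean and covariance, leaving a remainder bounded by $\|D^3 f_\epsilon^{\pm}\|_\infty$ times the third absolute moment. Summing the $n$ replacements gives a contribution of order $\epsilon^{-3}\mathbb E\|Y\|_2^3/\sqrt n$, and combined with the smoothing error $\lesssim d^{1/4}\epsilon$ this bounds $\mathbb P(S_n\in A)-\Phi_I(A)$ from above and below.

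The main obstacle is that naively optimizing $\epsilon$ in $d^{1/4}\epsilon+\epsilon^{-3}\mathbb E\|Y\|_2^3/\sqrt n$ yields only the suboptimal fourth-root rate $(\mathbb E\|Y\|_2^3/\sqrt n)^{1/4}$, whereas the stated bound is \emph{linear} in $\mathbb E\|Y\|_2^3/\sqrt n$. Upgrading to the linear rate is the heart of the argument and is achieved by an induction on the sample size $n$ (in the spirit of Bolthausen): one controls the $S_n$-mass of the boundary shell $(\partial A)^\epsilon$ by combining the Gaussian surface-area estimate with the induction hypothesis for smaller sample sizes, so that the shell mass is estimated self-consistently rather than through $\Phi_I$ alone, and the recursion closes to give the linear dependence. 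Securing the sharp $d^{1/4}$ surface-area estimate uniformly over all convex sets, together with carefully tracking the numerical constants through the mollification and the recursion, is what yields the explicit constant $400$.
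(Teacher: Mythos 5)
First, a point of comparison: the paper does not prove this statement at all. It is recorded as a \emph{Fact}, quoted with a citation to Bentkus (2003), so there is no internal proof to measure your attempt against. That said, your roadmap is, in broad strokes, faithful to how Bentkus's theorem is actually proved in the literature: the isotropic reduction (which indeed produces the normalized moment $\mathbb{E}\|\Sigma^{-1/2}Y\|_2^3$), Gaussian smoothing of indicators of convex sets, the $d^{1/4}$ bound on Gaussian surface area of convex bodies, a Lindeberg/Stein replacement, and a Bolthausen-type induction on $n$ are exactly the right ingredients, and your diagnosis that naive optimization of $d^{1/4}\epsilon + \epsilon^{-3}\mathbb{E}\|Y\|_2^3/\sqrt{n}$ only yields the rate $n^{-1/8}$ is correct.

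However, as a proof your proposal has a genuine gap, and it sits exactly where you yourself locate ``the heart of the argument'': the induction on $n$ is asserted, not carried out. Writing that the shell mass is ``estimated self-consistently'' and that ``the recursion closes to give the linear dependence'' states the goal rather than proving it. Closing the recursion is the entire difficulty: one must (i) establish an anti-concentration bound for the shell $(\partial A)^\epsilon$ under the law of $S_n$ itself, uniformly over all convex sets, by transferring the Gaussian surface-area estimate through the induction hypothesis at a smaller sample size; (ii) choose the smoothing width $\epsilon$ in terms of the inductive quantity rather than by one-shot optimization; and (iii) verify that the resulting self-referential inequality for $\delta_n := \sup_{A}\left|\mathbb{P}(S_n\in A)-\Phi_I(A)\right|$ actually iterates down to a bound \emph{linear} in $\mathbb{E}\|Y\|_2^3/\sqrt{n}$ with an absolute constant, rather than merely a better power. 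None of this is routine; it is the substance of Bentkus's paper. Two further points deserve flagging: the constant $400$ is simply asserted (``carefully tracking the numerical constants''), and the $d^{1/4}$ surface-area estimate is itself a deep external input (Ball's theorem), not something your smoothing construction produces. So the verdict is: correct skeleton matching the known proof strategy, but the step that makes the theorem true at the stated rate is missing.
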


Given a metric space $(T,\rho)$, the covering number $N(T,\rho,\eps)$ is defined as the smallest $N\in \mb N$ such that there exists a subset 
$F\subseteq T$ of cardinality $N$ with the property that for all $z\in T$, $\rho(z,F)\leq \eps$. 
When metric $\rho$ is clear from the context, we will simply write $N(T,\eps)$. 

Let $\l\{ Y(t), \ t\in T\r\}$ be a stochastic process indexed by $T$. 
We will say that it has sub-Gaussian increments with respect to metric $\rho$ if for all $t_1,t_2\in \mb T$ and $s>0$,
\[
\pr{ | Y_{t_1} - Y_{t_2} | \geq s \rho(t_1,t_2) } \leq 2e^{-s^2/2}.
\]
\begin{fact}[Dudley's entropy bound]
\label{th:DUD}
Let $\{Y(t), \ t\in T \}$ be a centered stochastic process with sub-Gaussian increments. Then the following inequality holds:
\begin{align}
\label{eq:dudley}
\mb E \sup_{t\in T} Y(t)\leq 12\int\limits_{0}^{D(T)} \sqrt{\log N(T,\rho,\eps)}d\eps,
\end{align}
where $D(T)$ is the diameter of the space $T$ with respect to $\rho$.
\end{fact}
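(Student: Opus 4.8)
The plan is to establish the bound by the classical chaining (successive approximation) argument. I would first reduce to the case where the index set $T$ is finite: writing $\E\sup_{t\in T}Y(t)=\sup\{\E\max_{t\in F}Y(t):F\subseteq T\text{ finite}\}$ (valid for a separable process), it suffices to prove the stated inequality with $\max_{t\in F}$ in place of $\sup_{t\in T}$ uniformly over finite $F$, and then take the supremum over $F$. This reduction turns every supremum below into a maximum over finitely many points and makes the chaining series effectively terminate.

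On the finite set I build a hierarchy of nets. For $j\geq0$ set $\eps_j:=2^{-j}D(T)$ and choose a minimal $\eps_j$-net $T_j\subseteq T$, so that $|T_j|=N(T,\rho,\eps_j)=:N_j$; since $\eps_0=D(T)$ the net $T_0=\{t_0\}$ is a single point. Let $\pi_j\colon T\to T_j$ send each $t$ to a nearest point of $T_j$, so $\rho(t,\pi_j(t))\leq\eps_j$. Because the process is centered, $\E Y(t_0)=0$, and the telescoping identity $Y(t)-Y(t_0)=\sum_{j\geq1}\bigl(Y(\pi_j(t))-Y(\pi_{j-1}(t))\bigr)$ (a finite sum on $F$, the tail being controlled since $\eps_j\to0$) reduces the problem to bounding the expected maximum of the increments at each level $j$.

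The analytic core is a sub-Gaussian maximal inequality applied layer by layer. At level $j$ the increments $Y(\pi_j(t))-Y(\pi_{j-1}(t))$ assume at most $N_{j-1}N_j\leq N_j^2$ distinct values, and each is sub-Gaussian with proxy $\rho(\pi_j(t),\pi_{j-1}(t))\leq\eps_j+\eps_{j-1}=3\eps_j$ by the triangle inequality. The standard fact that $M$ variables obeying the hypothesis's tail bound satisfy $\E\max_i Z_i\leq\sigma\sqrt{2\log M}$ (obtained by deducing an exponential-moment bound from the Gaussian-type tail and optimizing a union-type bound over $\lambda$) then gives $\E\max_t\bigl(Y(\pi_j(t))-Y(\pi_{j-1}(t))\bigr)\leq 3\eps_j\sqrt{2\log(N_j^2)}=6\eps_j\sqrt{\log N_j}$. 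Summing over $j\geq1$ and comparing the series to the integral through $\eps_j\sqrt{\log N_j}\leq2\int_{\eps_{j+1}}^{\eps_j}\sqrt{\log N(T,\rho,\eps)}\,d\eps$ — valid since $N(T,\rho,\cdot)$ is nonincreasing and $\eps_j-\eps_{j+1}=\eps_j/2$ — telescopes everything into $12\int_0^{D(T)}\sqrt{\log N(T,\rho,\eps)}\,d\eps$, which is the claimed bound.

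The step I expect to demand the most care is the maximal inequality together with the bookkeeping of constants, since the factor $12$ is precisely the product $3\times2\times2$ arising from the triangle-inequality bound $3\eps_j$ on each layer increment, the $\sqrt{N_j^2}$ count of links, and the sum-to-integral comparison. Obtaining it requires using the sub-Gaussian maximal inequality with the sharp Gaussian constant $\sqrt{2\log M}$ rather than an inflated one, so the delicate point is converting the two-sided tail hypothesis $\mb P\!\left(|Y_{t_1}-Y_{t_2}|\geq s\rho(t_1,t_2)\right)\leq2e^{-s^2/2}$ into exactly this constant. A secondary technicality is justifying the separability needed for the reduction to a finite index set and the vanishing of the chaining remainder.
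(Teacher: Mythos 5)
Your chaining skeleton is the standard one, and since the paper does not actually prove this fact (its ``proof'' is a citation to \citep{talagrand2005generic}), a self-contained argument of this kind is a legitimately different route: the reduction to finite $T$, the dyadic nets $\eps_j=2^{-j}D(T)$, the telescoping decomposition, the level-wise maximal inequality, and the factor-$2$ sum-to-integral comparison are all correct in outline. The genuine gap is exactly the point you flag at the end and do not resolve: the maximal inequality $\mb E\max_{i\leq M}Z_i\leq\sigma\sqrt{2\log M}$ with the sharp Gaussian constant is \emph{not} a consequence of the hypothesis of the fact. The hypothesis is only the two-sided tail bound $\pr{|Z|\geq s\sigma}\leq 2e^{-s^2/2}$, which is strictly weaker than the moment generating function bound $\mb E e^{\lambda Z}\leq e^{\lambda^2\sigma^2/2}$ from which $\sqrt{2\log M}$ is derived, and the conversion is necessarily lossy: the tail bound is vacuous for $s\leq\sqrt{2\log 2}$, so the fair-sign variable $Z=\pm\sigma\sqrt{2\log 2}$ satisfies it while its smallest admissible MGF proxy is $\sigma\sqrt{2\log 2}>\sigma$. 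Worse, the inequality you invoke is simply false at the stated constant: for i.i.d.\ symmetric variables saturating the hypothesis, $\pr{Z_i>u}=\min\l(1/2,e^{-u^2/(2\sigma^2)}\r)$, a direct computation gives $\mb E\max_{i\leq 4}Z_i\approx 1.82\,\sigma$, which exceeds $\sigma\sqrt{2\log 4}\approx 1.67\,\sigma$. A correct tail-based maximal inequality, e.g.\ $\mb E\max_{i\leq M}|Z_i|\leq\sigma\bigl(\sqrt{2\log(2M)}+1/\sqrt{2\log(2M)}\bigr)$ obtained by integrating the union bound, carries extra terms, and pushed through your bookkeeping ($N_jN_{j-1}\leq N_j^2$ links of length $3\eps_j$ per level) it yields a final constant of roughly $18$, not $12$.

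The fix is to tighten the chain rather than the maximal inequality. Build the chains by parent pointers: link each $s\in T_j$ to its nearest point $p_{j-1}(s)\in T_{j-1}$ and define the chain of $t$ recursively downward from $t$ itself, choosing all cardinality-one nets to be the same point so that those levels contribute nothing. Then level $j$ has at most $N_j$ distinct links (not $N_j^2$), each of length at most $\eps_{j-1}=2\eps_j$ (not $3\eps_j$). For $N_j\geq 2$ the lossy tail-integrated maximal inequality then gives a level-$j$ contribution at most $2\eps_j\bigl(\sqrt{2\log(2N_j)}+1/\sqrt{2\log(2N_j)}\bigr)\leq 5.5\,\eps_j\sqrt{\log N_j}$ (using $\log(2N_j)\leq 2\log N_j$ and $\sqrt{\log N_j}\geq\sqrt{\log 2}$), and your own comparison $\eps_j\sqrt{\log N_j}\leq 2\int_{\eps_{j+1}}^{\eps_j}\sqrt{\log N(T,\rho,\eps)}\,d\eps$ lands the total at $11\int_0^{D(T)}\sqrt{\log N(T,\rho,\eps)}\,d\eps$, inside the stated bound. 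This repair is worth doing precisely because the $12$ is not cosmetic here: it propagates into the explicit constants of Lemma \ref{lemma:1} and hence into $C_1(m)$ in Theorem \ref{th:main2}.
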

\begin{proof}
See \citep{talagrand2005generic}.
\end{proof}

Finally, we recall two useful facts related to Vapnik-Chervonenkis (VC) combinatorics \citep[see][for the definition of VC dimension and related theory]{wellner1}. 
Let $\m F$ be a finite-dimensional vector space of real functions on $S$. 
\begin{fact}
\label{th:VC1}
Let $\m C=\l\{ \{ f\geq 0\}: \ f\in \m F \r\}$ and $\m C_+=\l\{ \{ f > 0\}: \ f\in \m F \r\}$
Then 
\[
\mathrm{VC}(\m C)=\mathrm{VC}(\m C_+)=\dim(\m F).
\]
\end{fact}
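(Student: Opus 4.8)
The plan is to set $m:=\dim(\m F)$ and prove the two inequalities $\mathrm{VC}(\m C_+)\leq m$ and $\mathrm{VC}(\m C_+)\geq m$ separately (and likewise for $\m C$); both reduce to elementary linear algebra once shattering is translated into the solvability of sign patterns by evaluation of functions in $\m F$.

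For the upper bound, I would fix any $m+1$ points $x_1,\ldots,x_{m+1}\in S$ and consider the evaluation map $T:\m F\to\mb R^{m+1}$, $T(f)=(f(x_1),\ldots,f(x_{m+1}))$. Since $\dim(\m F)=m$, its image is a subspace of dimension at most $m$, hence proper, so there is a nonzero $a=(a_1,\ldots,a_{m+1})$ orthogonal to it, i.e. $\sum_i a_i f(x_i)=0$ for every $f\in\m F$. Separating positive and negative coefficients yields $\sum_{a_i>0}a_i f(x_i)=\sum_{a_i<0}|a_i|f(x_i)$ for all $f$, and after possibly replacing $a$ by $-a$ I may assume $\{i:a_i>0\}\neq\emptyset$. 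If the points were shattered by $\m C_+$, some $f\in\m F$ would have positivity set exactly $\{i:a_i>0\}$, so $f(x_i)>0$ for $a_i>0$ and $f(x_i)\leq 0$ otherwise; then the left side of the identity is strictly positive while the right side is nonpositive, a contradiction. Hence no $m+1$ points are shattered and $\mathrm{VC}(\m C_+)\leq m$. The subtle point is the analogue for the closed sets $\{f\geq 0\}$: the same identity is used, but since the boundary $\{f(x_i)=0\}$ now belongs to the set one must split on whether $\{i:a_i<0\}$ is empty, choosing the offending dichotomy accordingly (force the strictly-positive-coefficient points to be negative, or the negative-coefficient points to be strictly negative) so that the two sides again have opposite sign.

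For the lower bound I would exhibit $m$ shattered points. Taking a basis $f_1,\ldots,f_m$ of $\m F$ and setting $\Psi(x)=(f_1(x),\ldots,f_m(x))\in\mb R^m$, linear independence of the basis means no nontrivial combination vanishes identically, so $\Psi(S)$ spans $\mb R^m$; thus there exist $x_1,\ldots,x_m$ with $\Psi(x_1),\ldots,\Psi(x_m)$ linearly independent, i.e. the matrix $M=[f_j(x_i)]_{i,j}$ is invertible. Given any $A\subseteq\{1,\ldots,m\}$, I solve $M\beta=v$ with $v_i=+1$ for $i\in A$ and $v_i=-1$ otherwise; then $f=\sum_j\beta_j f_j\in\m F$ satisfies $f(x_i)=v_i\in\{-1,+1\}$, so both $\{f>0\}$ and $\{f\geq 0\}$ meet $\{x_1,\ldots,x_m\}$ in exactly $\{x_i:i\in A\}$. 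Because the values are never $0$, a single witness handles the open and closed classes at once, and since $A$ is arbitrary these points are shattered, giving $\mathrm{VC}(\m C),\mathrm{VC}(\m C_+)\geq m$.

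Combining the two bounds yields $\mathrm{VC}(\m C)=\mathrm{VC}(\m C_+)=m=\dim(\m F)$. I expect the only genuine care to lie in the boundary bookkeeping for $\{f\geq 0\}$ in the upper bound; the remaining steps are a clean rank/invertibility argument, with the $\pm1$ values of the lower-bound witnesses conveniently covering the closed and open cases simultaneously.
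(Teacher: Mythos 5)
Your proof is correct, but note that the paper does not actually prove Fact~\ref{th:VC1}: its ``proof'' is a pointer to Proposition 3.6.6 in the cited book of Gin\'{e} and Nickl, so there is no internal argument to compare against. Your two-sided argument is the classical one (going back to Dudley) and is essentially the proof behind that citation: the upper bound via an orthogonality relation $\sum_i a_i f(x_i)=0$ that rules out one prescribed sign pattern on any $m+1$ points, and the lower bound via an invertible evaluation matrix realizing all $\pm 1$ patterns on suitably chosen $m$ points. Your handling of the two delicate points is sound. For the closed class $\{f\geq 0\}$, the case split on whether $\{i: a_i<0\}$ is empty does produce a contradictory dichotomy in both cases: if it is nonempty, take the dichotomy $A=\{i:a_i>0\}$, which makes the right-hand side strictly negative while the left-hand side is nonnegative; if it is empty, choose a dichotomy forcing $f(x_i)<0$ at every point with $a_i>0$, which makes a sum that must vanish strictly negative. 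For the lower bound, linear independence of $f_1,\dots,f_m$ \emph{as functions} says precisely that the evaluation vectors $\Psi(x)$, $x\in S$, span $\mb R^m$, so a basis can be extracted from them, and the $\pm 1$-valued witnesses avoid the boundary, so a single construction serves both $\m C$ and $\m C_+$. One small remark: the equality $\mathrm{VC}(\m C)=\dim(\m F)$ presumes the convention that $\mathrm{VC}(\cdot)$ is the largest cardinality of a shattered set, which is the convention under which Fact~\ref{th:VC2} is applied in the proof of Lemma~\ref{lemma:1}; your proof establishes the equality under exactly that convention, so it fits the paper's usage. The net effect of your route is a self-contained elementary proof where the paper offers only a reference.
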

\begin{proof}
See Proposition 3.6.6 in \citep{gine2015mathematical}. 
\end{proof}

\begin{fact}
\label{th:VC2}
Let $\m C$ be a class of sets of VC-dimension $V$. 
Then, for any probability measure $Q$, 
\begin{align}
\label{eq:cov-number}
&
N(\m C, L_2(Q),\eps) \leq e (V +1) (4e)^{V} \l( \frac{1}{\eps^2} \r)^{V}
\end{align}
for all $0<\eps\leq 1$; 
\end{fact}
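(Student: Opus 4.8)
The plan is to prove the equivalent bound on the $\eps$-packing number $D(\m C, L_2(Q),\eps)$, the maximal cardinality of an $\eps$-separated subset of $\m C$, and then to invoke the elementary inequality $N(\m C, L_2(Q),\eps)\leq D(\m C, L_2(Q),\eps)$, which holds because any maximal $\eps$-separated family is automatically an $\eps$-net. Throughout I identify each set $C\in\m C$ with its indicator $\mf 1_C$, so that the $L_2(Q)$ distance satisfies $\|\mf 1_{C_i}-\mf 1_{C_j}\|_{L_2(Q)}^2=Q(C_i\triangle C_j)$; thus $\eps$-separation of $C_i$ and $C_j$ means $Q(C_i\triangle C_j)>\eps^2$.

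The core is a probabilistic extraction argument. Fix a maximal $\eps$-separated family $C_1,\dots,C_D$, draw $Z_1,\dots,Z_m$ i.i.d.\ from $Q$, and record for each $C_i$ its trace $(\mf 1_{C_i}(Z_1),\dots,\mf 1_{C_i}(Z_m))$. For a fixed pair $i\neq j$, the probability that the two traces coincide equals $(1-Q(C_i\triangle C_j))^m<(1-\eps^2)^m\leq e^{-m\eps^2}$, so a union bound over the $\binom{D}{2}$ pairs shows that with probability at least $1-\binom{D}{2}e^{-m\eps^2}$ all $D$ traces are distinct. Whenever this probability is positive there exists a sample of size $m$ on which $\m C$ cuts out at least $D$ distinct subsets. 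By the Sauer--Shelah lemma the number of subsets of an $m$-point set that a class of VC-dimension $V$ can pick out is at most $\sum_{i=0}^V\binom{m}{i}\leq(em/V)^V$ for $m\geq V$, which forces $D\leq(em/V)^V$ as soon as $\binom{D}{2}e^{-m\eps^2}<1$.

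To turn this into an explicit bound I would choose $m$ to be the least integer satisfying $\tfrac12 D^2 e^{-m\eps^2}\leq 1$, i.e.\ $m$ of order $2\eps^{-2}\log D$, and substitute to obtain the self-referential inequality $D\leq\bigl(2e\,\eps^{-2}\log D\,/\,V\bigr)^V$.

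The main obstacle is disentangling this implicit inequality: the stray factor $\log D$ on the right must be eliminated to recover a clean power of $\eps^{-2}$. I would linearize the logarithm by a bound of the type $\log x\leq \delta x-\log\delta-1$ (equivalently $\log x\leq x/(e\delta)+\log\delta$) with the free parameter $\delta$ tuned so that, after taking logarithms of $D\leq(2e\eps^{-2}\log D/V)^V$, the $\log D$ term can be moved to the left-hand side and absorbed. Optimizing $\delta$ and collecting the leftover multiplicative constants into the prefactor is what produces the stated form $e(V+1)(4e)^V(\eps^{-2})^V$; the constraint $\eps\leq1$ and the requirement $m\geq V$ enter here as well. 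The combinatorial (Sauer--Shelah) and probabilistic (union-bound) steps are routine --- it is the careful bookkeeping of constants through this final optimization that pins down the precise bound.
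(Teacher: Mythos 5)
Your strategy is the classical extraction argument of Dudley: reduce covering to packing, force distinct traces on a random sample drawn from $Q$, and invoke Sauer--Shelah. Each individual step you describe is correct, but the step you defer as ``careful bookkeeping of constants'' is not bookkeeping --- it is impossible, and this is a genuine gap. The union bound over $\binom{D}{2}$ pairs forces $m\gtrsim 2\eps^{-2}\log D$, and the resulting self-referential inequality $D\leq \l(2e\eps^{-2}\log D/V\r)^V$ is strictly weaker than the claimed conclusion: it is satisfied (asymptotically with equality) by $D$ of order $\l(4e\log(1/\eps)/\eps^2\r)^V$, which for small $\eps$ exceeds $e(V+1)(4e)^V\eps^{-2V}$ by the unbounded factor $(\log(1/\eps))^V$. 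Hence no algebraic disentangling can extract from it a bound of degree exactly $V$ in $1/\eps^2$ valid for all $0<\eps\leq 1$. Concretely, your linearization $\log x\leq \delta x-\log\delta-1$ with fixed $\delta>0$ yields $D\leq \l(2/(\delta\eps^2)\r)^{V/(1-\delta)}$, inflating the exponent to $V/(1-\delta)>V$; and letting $\delta$ depend on $\eps$ cannot repair this, since matching the exponent $V$ requires $\delta\log(1/\eps^2)+\log(1/\delta)=O(1)$ uniformly in $\eps$, while the minimum over $\delta$ of the left-hand side is $1+\log\log(1/\eps^2)\to\infty$. What your argument does prove is the weaker estimate $N(\m C,L_2(Q),\eps)\lesssim \l(C\log(1/\eps)/\eps^2\r)^V$, or, via the crude bound $\log y\leq\sqrt{y}$, a clean bound with exponent $2V$ in place of $V$.

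The log-free bound with exponent exactly $V$, which is what Fact \ref{th:VC2} asserts, is Haussler's theorem, and its proof rests on a genuinely different idea: a combinatorial density/counting argument (unique elements of the one-inclusion graph), not a union bound plus Sauer--Shelah. This is reflected in the paper itself, which does not prove the fact but cites \citep{dudley1978central} for the qualitative polynomial bound and \citep{haussler1995sphere}, with explicit constants as in \citep{pollardasymptopia}, for the stated form. A mitigating remark: the weaker bound your method does deliver would in fact suffice for the only place the paper uses this fact --- the entropy integral in the proof of Lemma \ref{lemma:1} --- at the cost of slightly worse constants $C_1(m)$, $C_2(m)$ in Theorem \ref{th:main2}; but it does not prove the statement as written.
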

\begin{proof}
This bound follows from results of R. Dudley \citep{dudley1978central} and D. Haussler \citep{haussler1995sphere}. 
The bound with explicit constants as stated above is given in \citep{pollardasymptopia}.
\end{proof}

\subsection{Proof of Theorem \ref{th:main2}.}
\label{sec:proof2}

By the definition of the geometric median, 
\[
\widehat \theta^{(k)} = \argmin_{z\in \mb R^m}\sum_{j=1}^k \| z - \bar \theta_j\|_2,
\]
hence 
\begin{align}
\label{eq:01}
&
\frac{1}{\sigma_n}\l( \widehat \theta^{(k)} - \theta_\ast \r) = \argmin_{z\in \mb R^m} \sum_{j=1}^k \l\| z - \frac{1}{\sigma_n} \l(\bar \theta_j - \theta_\ast \r)  \r\|_2. 
\end{align}
Set $F_k(z) := \sum_{j=1}^k \l\| z - \frac{1}{\sigma_n} \l(\bar \theta_j - \theta_\ast \r)  \r\|_2$. 
Then \eqref{eq:01} is equivalent to 
\[
\widehat \mu^{(k)}:=\frac{1}{\sigma_n}\l( \widehat \theta^{(k)} - \theta_\ast \r) = \argmin_{z\in \mb R^m} F(z).
\]  
Denote by $\Phi^{(n)}$ the distribution of $\frac{1}{\sigma_n} \l(\bar \theta_1 - \mu \r)$, and by $\Phi^{(n)}_{k}$ - the empirical distribution corresponding to the sample 
\[
W_1=\frac{1}{\sigma_n}(\bar \theta_1 - \mu), \ldots, W_k=\frac{1}{\sigma_n}(\bar \theta_k - \mu).
\] 
Let $DF_k(\widehat \mu^{(k)};u):=\lim_{t\searrow 0} \frac{F_k(\widehat \mu^{(k)}+tu) - F_k(\widehat \mu^{(k)})}{t}$ be the directional derivative of $F_k$ at point $\widehat \mu^{(k)}$ in direction $u$. 
Clearly, $DF_k(\widehat \mu^{(k)};u)\geq 0$ for any $u$ such that $\|u\|_2=1$. 
On the other hand, it is easy to check that $DF_k(\widehat \mu^{(k)};u) = \Phi^{(n)}_{k} f_{u,\widehat \mu^{(k)}}$,
where 
\[
f_{u,b}(x) = \begin{cases}
\dotp{\frac{x - b }{\| x - b \|_2 }}{u}, & x\ne b, \\
1, & x = b.
\end{cases}
\]
Let $\m S_m$ be the set of closed cones defined in \eqref{eq:cones}, and note that for any unit vector $u\in \mb R^m$ and $t\in[0,1]$, 
\begin{align}
\label{eq:cone}
&
\l\{ x\in \mb R^m: \ f_{u,\widehat \mu^{(k)}}(x)\geq t \r\} = C_u(t;\widehat \mu^{(k)}).
\end{align} 
Next, observe that 
\begin{align}
\label{eq:main-0}
&
0\leq DF(\widehat \mu^{(k)};u) = ( \Phi^{(n)}_{k} - \Phi^{(n)}) f_{u,\widehat \mu^{(k)}} + (\Phi^{(n)} - \Phi_{\Sigma}) f_{u,\widehat \mu^{(k)}} + \Phi_{\Sigma} \, f_{u,\widehat \mu^{(k)}}.
\end{align}
We will assume that $u$ is \emph{chosen} such that $\Phi_{\Sigma} \, f_{u,\widehat \mu^{(k)}} \leq 0$ (if not, simply replace $u$ by $-u$). 
Then \eqref{eq:main-0} implies that
\begin{align}
\label{eq:15}
&
\Phi_{\Sigma} \, f_{-u,\widehat \mu^{(k)}} \leq  \l| (\Phi^{(n)}_{k} - \Phi^{(n)}) f_{u,\widehat \mu^{(k)}}  \r| + 
\l| (\Phi^{(n)} - \Phi_{\Sigma}) \, f_{u,\widehat \mu^{(k)}} \r|.
\end{align}
It remains to estimate the left-hand side of inequality \eqref{eq:15} from below and its right-hand side from above. 
We start by finding an upper bound (proved in section \ref{proof:lemma0}) for $\l| (\Phi^{(n)} - \Phi_{\Sigma}) \, f_{u,\widehat \mu^{(k)}} \r|$.
\begin{lemma}
\label{lemma:0}
The following bound holds:
\[
\l| (\Phi^{(n)} - \Phi_{\Sigma}) \, f_{u,\widehat \mu^{(k)}} \r| \leq 2g_{\m S_m}(n),
\]
where $g_{\m S_m}(n)$ was defined in assumption \ref{ass:2}.
\end{lemma}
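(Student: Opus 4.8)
The plan is to transfer the closeness of $\Phi^{(n)}$ and $\Phi_\Sigma$ over the cone class $\m S_m$, granted by assumption \ref{ass:2}, into closeness of the corresponding expectations of the bounded test function $f_{u,\widehat \mu^{(k)}}$. The starting observation is that $f_{u,b}$ takes values in $[-1,1]$, being the inner product of the unit vector $u$ with a unit vector (and equal to $1$ at $x=b$ by convention). For any probability measure $Q$ on $\mb R^m$, this boundedness yields the layer-cake representation
\[
Q f_{u,b} = \int_0^1 Q\l( f_{u,b}\geq t \r)\,dt - \int_0^1 Q\l( f_{u,b}\leq -t \r)\,dt.
\]

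Next I would identify the level sets with elements of $\m S_m$. By \eqref{eq:cone}, $\l\{ f_{u,\widehat \mu^{(k)}}\geq t \r\} = C_u(t;\widehat \mu^{(k)})\in \m S_m$ for every $t\in[0,1]$. For the lower level sets, the inequality $f_{u,b}(x)\leq -t$ is equivalent to $\dotp{x-b}{-u}\geq t\|x-b\|_2$, so $\l\{ f_{u,\widehat \mu^{(k)}}\leq -t \r\} = C_{-u}(t;\widehat \mu^{(k)})\in \m S_m$ as well, since $-u$ is again a unit vector (the two sets may differ only at the apex $x=\widehat \mu^{(k)}$, which is $\Phi_\Sigma$-null). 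Subtracting the layer-cake identities for $Q=\Phi^{(n)}$ and $Q=\Phi_\Sigma$ and pushing absolute values inside the integrals gives
\[
\l| (\Phi^{(n)}-\Phi_\Sigma) f_{u,\widehat \mu^{(k)}} \r| \leq \int_0^1 \l| (\Phi^{(n)}-\Phi_\Sigma)\l( C_u(t;\widehat \mu^{(k)}) \r) \r|\,dt + \int_0^1 \l| (\Phi^{(n)}-\Phi_\Sigma)\l( C_{-u}(t;\widehat \mu^{(k)}) \r) \r|\,dt.
\]

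Since $C_u(t;\widehat \mu^{(k)})$ and $C_{-u}(t;\widehat \mu^{(k)})$ both lie in $\m S_m$ for every $t\in[0,1]$, assumption \ref{ass:2} bounds each integrand by $g_{\m S_m}(n)$ uniformly in $t$; integrating the two terms over $[0,1]$ yields the factor $2$ and the claimed bound $2g_{\m S_m}(n)$. Note that this holds for every unit vector $u$ and every realization of the (random) center $\widehat \mu^{(k)}$, because the supremum defining $g_{\m S_m}(n)$ runs over all cones, hence over all apex locations $b\in\mb R^m$, all axes, and all apertures $t\in[0,1]$. The only delicate point is the identification of the lower level sets with closed cones: the mismatch is confined to the apex and to the conical boundary, which are Lebesgue-null and therefore carry no $\Phi_\Sigma$-mass; they likewise carry no mass under $\Phi^{(n)}$ whenever $\bar\theta_1$ is non-atomic (as in the Berry--Esseen examples), so the constant $2$ is unaffected. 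I do not anticipate any genuine obstacle beyond this routine bookkeeping.
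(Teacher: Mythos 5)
Your proof is correct and is essentially the same as the paper's: the paper likewise reduces $(\Phi^{(n)}-\Phi_\Sigma)\,f_{u,\widehat\mu^{(k)}}$ to differences of the two measures on the cones $C_{u}(t;\widehat\mu^{(k)})$ and $C_{-u}(t;\widehat\mu^{(k)})$ via the layer-cake identity (written there as $f_{u,\widehat\mu^{(k)}}=\max(f_{u,\widehat\mu^{(k)}},0)-\max(f_{-u,\widehat\mu^{(k)}},0)$), and then applies assumption \ref{ass:2} to each piece to obtain the factor $2$. Your apex caveat applies equally to the paper's identity $-f_{u,\widehat\mu^{(k)}}=f_{-u,\widehat\mu^{(k)}}$, which also fails only at the single point $x=\widehat\mu^{(k)}$, so your treatment of that measure-zero subtlety is, if anything, slightly more careful than the original.
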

The next Lemma (proved in section \ref{proof:lemma1}) provides an upper bound for 
$\l| (\Phi^{(n)}_{k} - \Phi^{(n)}) f_{u,\widehat \mu^{(k)}} \r|$.
\begin{lemma}
\label{lemma:1}
With probability $\geq 1 - e^{-2s}$,
\[
\l| (\Phi^{(n)}_{k} - \Phi^{(n)}) f_{u,\widehat \mu^{(k)}} \r| \leq \frac{12(m+4)}{\sqrt k}\sqrt{\log 4e^{5/2}}+ \sqrt{\frac{s}{k}}. 
\]
\end{lemma}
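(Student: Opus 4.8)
The plan is to read $\l| (\Phi^{(n)}_{k} - \Phi^{(n)}) f_{u,\widehat \mu^{(k)}} \r|$ as a single realization of an empirical process supremum, so as to remove the dependence on the data-driven center $\widehat\mu^{(k)}$ and direction $u$. Writing $\mb P_k := \Phi_k^{(n)}$ for the empirical measure of the i.i.d. sample $W_1,\dots,W_k$ and $P := \Phi^{(n)}$ for their common law, and setting $\m F := \l\{ f_{u,b}: \ \|u\|_2 = 1, \ b\in\mb R^m \r\}$, it suffices to control $Z := \sup_{f\in\m F}\l| (\mb P_k - P)f \r|$, because $f_{u,\widehat\mu^{(k)}}\in\m F$ for every admissible $u$. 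The argument then splits into (i) concentration of $Z$ about its mean and (ii) an upper bound on $\E Z$.

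For step (i) I would invoke the bounded difference inequality (Fact \ref{th:BDI}). Each $f_{u,b}$ takes values in $[-1,1]$, so replacing a single $W_i$ perturbs $(\mb P_k - P)f_{u,b}$, and hence the supremum $Z$, by at most $O(1/k)$ uniformly in $(u,b)$; the resulting Gaussian tail yields the additive deviation term of order $\sqrt{s/k}$ at confidence $1 - e^{-2s}$.

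Step (ii) is the crux. I would first symmetrize (a standard step), bounding $\E Z$ by a constant times the expected supremum of the Rademacher process $f\mapsto\frac{1}{k}\sum_{j=1}^k\eps_j f(W_j)$. Conditionally on $W_1,\dots,W_k$ this process has sub-Gaussian increments with respect to the rescaled $L_2(\mb P_k)$ metric $\rho(f,g)=k^{-1/2}\|f-g\|_{L_2(\mb P_k)}$, so Dudley's entropy bound (Fact \ref{th:DUD}) applies and produces both the constant $12$ and the $1/\sqrt k$ scaling. The entropy integral is then governed by the covering numbers $N(\m F, L_2(\mb P_k),\cdot)$, which must be controlled uniformly over the random measure $\mb P_k$.

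The key geometric observation, and the main obstacle, is to bound these covering numbers by a VC argument. By \eqref{eq:cone}, the superlevel set $\l\{ f_{u,b}\geq t\r\}$ is exactly the cone $C_u(t;b)$, and its defining condition $\dotp{x-b}{u}\geq t\|x-b\|_2$ is equivalent, on the halfspace $\dotp{x-b}{u}\geq 0$, to the quadratic inequality $\dotp{x-b}{u}^2 \geq t^2\|x-b\|_2^2$. Hence every cone is the intersection of a halfspace with a set of the form $\{p\geq 0\}$ for $p$ in the finite-dimensional space of quadratic polynomials on $\mb R^m$, of dimension $\binom{m+2}{2}=O(m^2)$. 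Fact \ref{th:VC1} then caps the VC dimension $V$ of the cone class at $O(m^2)$, and Fact \ref{th:VC2} converts this into the covering-number bound carrying the constant $4e^{5/2}$. Feeding this into Dudley's integral and tracking constants gives $\E Z \leq \frac{12(m+4)}{\sqrt k}\sqrt{\log 4e^{5/2}}$, since $\sqrt{V}$ is of order $m$; combining with (i) yields the stated inequality. The delicate points I expect to wrestle with are (a) the passage from the VC dimension of the cone \emph{level sets} to the covering numbers of the \emph{function class} $\m F$ in $L_2(\mb P_k)$ (including the intersection with the halfspace and the sign bookkeeping in the quadratic reformulation), and (b) arranging the numerical constants so that the dimensional factor collapses exactly to $(m+4)$.
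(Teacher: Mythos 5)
Your architecture coincides with the paper's: a uniformization step, the bounded difference inequality (Fact \ref{th:BDI}) for the deviation term, and Dudley's entropy bound (Fact \ref{th:DUD}) combined with the description of cones as intersections of a halfspace with a quadratic positivity set (Facts \ref{th:VC1}, \ref{th:VC2}) for the expectation. The genuine gap is exactly the step you defer as ``delicate point (a)'', and it is not a technicality. Facts \ref{th:VC1} and \ref{th:VC2} bound covering numbers of classes of \emph{sets}, whereas your Dudley integral requires covering numbers of the real-valued class $\m F=\l\{ f_{u,b}\r\}$ in $L_2\l(\Phi^{(n)}_k\r)$. The VC property of the superlevel sets $\l\{ f_{u,b}\geq t\r\}$ does not transfer to $\m F$ by any tool quoted in the paper: an $\eps$-net for the cone class supplies, for each fixed $t$, a set close to $C_u(t;b)$, but the approximating set may vary with $t$, so these cannot be assembled into a single approximating function. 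What the level-set structure actually gives is that $\m F$ is a bounded VC-major class (it sits inside twice the symmetric convex hull of the cone indicators, by the layer-cake identity), and the entropy bounds available for such classes --- convex-hull entropy of the form $\log N(\eps)\lesssim \eps^{-2V/(V+2)}$, or a VC-subgraph argument for the sets $\l\{ (x,s): \, s\|x-b\|_2 < \dotp{x-b}{u} \r\}$ requiring positivity sets of polynomials in $(x,s)$ jointly and Boolean-combination VC bounds --- have a different shape and will not produce the advertised constant $\frac{12(m+4)}{\sqrt k}\sqrt{\log 4e^{5/2}}$. So step (ii), as you describe it, cannot be completed with the facts you invoke.

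The paper closes this hole by performing the reduction to sets \emph{before} any probabilistic machinery: applying the layer-cake identity \eqref{eq:transform} to the positive and negative parts of $f_{u,\widehat\mu^{(k)}}$ (exactly as in the proof of Lemma \ref{lemma:0}), pointwise in the data,
\[
\l| (\Phi^{(n)}_{k} - \Phi^{(n)}) f_{u,\widehat \mu^{(k)}} \r| \leq
\max_{0\leq t\leq 1} \l| (\Phi^{(n)}_{k} - \Phi^{(n)}) C_u(t; \widehat \mu^{(k)}) \r|
\leq \sup_{A\in \m S_m} \l| \Phi^{(n)}_{k}(A) - \Phi^{(n)}(A) \r|,
\]
so that the bounded difference inequality, the chaining step, and the VC covering bounds are all run on the \emph{set-indexed} process, where Fact \ref{th:VC2} applies verbatim. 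This ordering is also what delivers the stated constants: indicators take values in $[0,1]$, so Fact \ref{th:BDI} applies with $c_j=1/k$ and yields exactly $\sqrt{s/k}$, whereas your $[-1,1]$-valued functions force $c_j=2/k$ and hence a deviation term $2\sqrt{s/k}$; likewise, once you insert an explicit symmetrization factor of $2$ in front of Dudley's constant $12$, you obtain $24$, so your constant tracking is internally inconsistent (the paper is itself somewhat cavalier here, applying Fact \ref{th:DUD} directly to the empirical process and dropping a factor $2$ from the positive and negative parts, but your version adds further losses on top). The repair is simple: reorder your argument so that the layer-cake reduction to $\sup_{A\in \m S_m}\l|\Phi^{(n)}_k(A)-\Phi^{(n)}(A)\r|$ comes first; your steps (i) and (ii) then go through for the set-indexed process exactly as in the paper.
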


Finally, it remains to estimate $\Phi_{\Sigma} \, f_{-u,\widehat \mu^{(k)}}$ from below. 
The following inequality (proved in section \ref{proof:lemma2}) holds:
\begin{lemma}
\label{lemma:2}
Set $u = -\frac{\Sigma^{-1}\widehat \mu^{(k)}}{\|\Sigma^{-1}\widehat \mu^{(k)}\|_2}$. Then
\[
\Phi_\Sigma \, f_{-u,\widehat \mu^{(k)}} \geq \frac{0.15 }{2 \l\| \Sigma^{-1/2}\r\| \sqrt{m+2\sqrt{(m-1)\ln 4}}}\tanh \l( \l\| \widehat \mu^{(k)} \r\|_2 \r),
\]
where $\tanh(\cdot)$ is the hyperbolic tangent defined as $\tanh(x)=\frac{1-e^{-2x}}{1+e^{-2x}}$.
\end{lemma}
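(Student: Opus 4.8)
The plan is to view $\Phi_\Sigma\, f_{-u,b}$, with $b:=\widehat\mu^{(k)}$ and $-u=\Sigma^{-1}b/\|\Sigma^{-1}b\|_2$, as a Gaussian expectation and reduce it to a one-dimensional integral governed by $\tanh(\|b\|_2)$. Writing $Z\sim N(0,\Sigma)$,
\[
\Phi_\Sigma\, f_{-u,b}=\mb E\frac{\dotp{Z-b}{\Sigma^{-1}b}}{\|\Sigma^{-1}b\|_2\,\|Z-b\|_2}.
\]
First I would \emph{whiten}: substituting $Z=\Sigma^{1/2}G$ with $G\sim N(0,I_m)$ and setting $a:=\Sigma^{-1/2}b$, $e:=a/\|a\|_2$, the inner product $\dotp{Z-b}{-u}$ reduces to $\pm\|a\|_2(\dotp{G}{e}-\|a\|_2)$, so it depends on $G$ only through the single standard normal coordinate $g_1:=\dotp{G}{e}$; the orientation of $u$ fixed in the main proof makes the relevant directional derivative nonnegative, so it suffices to lower-bound $\mb E[\|a\|_2(\|a\|_2-g_1)/\|\Sigma^{1/2}(G-a)\|_2]$. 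The normalizing denominator becomes $\|\Sigma^{-1/2}a\|_2\le\|\Sigma^{-1/2}\|\,\|a\|_2$, and pulling this scalar out front already produces the $1/\|\Sigma^{-1/2}\|$ in the statement; moreover $\|a\|_2=\|\Sigma^{-1/2}b\|_2\ge\|b\|_2$ (because $\|\Sigma\|\le1$), which lets me relax $\tanh(\|a\|_2)$ to $\tanh(\|b\|_2)$ at the very end.

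To extract the hyperbolic tangent I would use an antithetic reflection $T$ across the hyperplane $\{g_1=\|a\|_2\}$: this involution maps the half-space on which the numerator is negative onto the one on which it is positive, preserves the Euclidean norm $\|G-a\|_2$, and multiplies the standard normal density by $e^{-2\|a\|_2(\|a\|_2-g_1)}$. Were the denominator the Euclidean norm $\|G-a\|_2$ (which $T$ preserves), folding the integral onto $\{g_1\le\|a\|_2\}$ would replace the integrand by a manifestly nonnegative kernel carrying the factor $1-e^{-2\|a\|_2(\|a\|_2-g_1)}$, and the remaining integration against the $N(0,1)$ law of $g_1$ would produce $\tanh(\|a\|_2)$. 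For the denominator itself I would decompose $G=g_1e+G_\perp$ with $G_\perp$ ranging over the $(m-1)$-dimensional orthogonal complement of $e$, restrict the nonnegative integrand to the high-probability event $\{\|G_\perp\|_2\le\sqrt{m+2\sqrt{(m-1)\ln 4}}\}$ (a $\chi^2_{m-1}$ tail estimate, the $\ln 4$ fixing the exceptional probability), and use $\|G-a\|_2^2=(\|a\|_2-g_1)^2+\|G_\perp\|_2^2$; this is the origin of the dimensional factor $C_2(m)=\sqrt{m+2\sqrt{(m-1)\ln 4}}$, while the numerical constant $0.15$ and the factor $1/2$ come from a crude but explicit lower bound on the resulting one-dimensional integral.

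The main obstacle is the anisotropy of $\Sigma$ in the denominator $\|\Sigma^{1/2}(G-a)\|_2$: the reflection $T$ that simplifies both the numerator and the Gaussian density is the Euclidean one, and it does \emph{not} leave $\|\Sigma^{1/2}(\,\cdot\,-a)\|_2$ invariant, because the reflected and original vectors differ by a sign in the $e$-direction and the cross terms $e^\top\Sigma G_\perp$ fail to cancel. Thus, after folding, the two denominators $\|\Sigma^{1/2}(G-a)\|_2$ and $\|\Sigma^{1/2}(T(G)-a)\|_2$ must be compared rather than cancelled; I would control their ratio through the spectral bounds $1/\|\Sigma^{-1/2}\|\le\|\Sigma^{1/2}w\|_2/\|w\|_2\le1$ and exploit that their discrepancy is $O(\|a\|_2-g_1)$, hence suppressed by the vanishing numerator weight near the reflection hyperplane (where $T$ acts trivially). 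Reconciling this distortion while retaining only a single power of $\|\Sigma^{-1/2}\|$ and the correct saturation of the bound in $\|b\|_2$ is the delicate step; once it is carried out, assembling the one-dimensional integral, the concentration event, and the prefactors from the first two paragraphs yields the stated inequality.
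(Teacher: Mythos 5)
Your outline reproduces the paper's argument in almost every component: the whitening $Z=\Sigma^{1/2}G$ with $a=\Sigma^{-1/2}b$, the extraction of $1/\|\Sigma^{-1/2}\|$ from $\|a\|_2/\|\Sigma^{-1/2}a\|_2$, the folding step that produces the factor $1-e^{-2\|a\|_2(\|a\|_2-g_1)}$ and hence $\tanh$, the truncation of $\|G_\perp\|_2$ via a $\chi^2_{m-1}$ tail bound (source of $C_2(m)$ and of the factor $1/2$), the constant $0.15<1-\Phi(1)$, and the final relaxation $\tanh(\|a\|_2)\geq\tanh(\|b\|_2)$. The genuine gap is exactly the step you flag as ``delicate'' and leave uncarried-out: the anisotropic denominator. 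Your proposed repair --- compare $D_\Sigma(G):=\|\Sigma^{1/2}(G-a)\|_2$ with $D_\Sigma(TG)$ through spectral bounds and argue the discrepancy is suppressed by the vanishing numerator --- does not close. After folding, what enters is the difference of \emph{reciprocals}, and the natural estimate is $\bigl|D_\Sigma(TG)^{-1}-D_\Sigma(G)^{-1}\bigr|\leq 2\,|\,\|a\|_2-g_1|\,\|\Sigma^{1/2}e\|_2/\bigl(D_\Sigma(G)D_\Sigma(TG)\bigr)$; since each denominator can be as small as $\lambda_{\min}(\Sigma)^{1/2}\,|\,\|a\|_2-g_1|$, the numerator weight $|\,\|a\|_2-g_1|$ is cancelled rather than helpful, and the error term scales like $\|\Sigma^{-1}\|=\|\Sigma^{-1/2}\|^2$. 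For ill-conditioned $\Sigma$ this swamps the main term, so the single power of $\|\Sigma^{-1/2}\|$ in the claimed bound cannot be recovered along this route.

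The missing idea is that the right involution is not the hyperplane reflection but the \emph{point reflection through $a$}, namely $G\mapsto 2a-G$, i.e.\ your $T$ composed with $G_\perp\mapsto -G_\perp$. It still flips the sign of the numerator $\|a\|_2-g_1$, and it still multiplies the Gaussian density by exactly $e^{-2\|a\|_2(\|a\|_2-g_1)}$ because the orthogonal factor $\phi_{m-1}$ is even; but it sends $G-a$ to $-(G-a)$ and therefore preserves \emph{every} norm of $G-a$, in particular $\|\Sigma^{1/2}(G-a)\|_2$: the cross terms $e^{\top}\Sigma G_\perp$ that obstruct your $T$ change sign twice and cancel. (Equivalently: the error term in your scheme vanishes identically by the symmetry $G_\perp\mapsto-G_\perp$, a cancellation that spectral ratio bounds cannot see.) Folding with the point reflection yields a manifestly nonnegative integrand with the anisotropic denominator intact; only then apply the pointwise bound $\|\Sigma^{1/2}(G-a)\|_2\leq\|G-a\|_2$ (legitimate now that the integrand is nonnegative, since $\|\Sigma\|\leq 1$) to pass to the Euclidean denominator, after which your second paragraph goes through verbatim. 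For comparison, the paper performs this denominator replacement \emph{before} any folding, asserting in one line that $\int \dotp{z-a}{\tilde u}\,\|\Sigma^{1/2}(z-a)\|_2^{-1}\,d\Phi(z)\geq\int \dotp{z-a}{\tilde u}\,\|z-a\|_2^{-1}\,d\Phi(z)$; that inequality is not pointwise (it fails wherever the numerator is negative) and is itself most cleanly justified by the same point-reflection symmetrization. So you correctly isolated the one nontrivial obstruction in this lemma, but the repair you sketch is the wrong one, and the argument is incomplete without the symmetry fix.
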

It therefore follows from Lemmas \ref{lemma:0}, \ref{lemma:1} and \ref{lemma:2} that with probability exceeding $1 - e^{-2s}$,
\[
\frac{0.15}{2 \l\| \Sigma^{-1/2}\r\| \sqrt{m+2\sqrt{(m-1)\ln 4}}}\tanh \l( \l\| \widehat \mu^{(k)} \r\|_2 \r) \leq 
\frac{12(m+4)}{\sqrt k}\sqrt{\log 4e^{5/2}}+ \sqrt{\frac{s}{k}} +
2 g_\m S(n),
\]	
which implies the bound of Theorem \ref{th:main2}.

\subsubsection{Proof of Lemma \ref{lemma:0}.}
\label{proof:lemma0}

Recall that for any non-negative function $f:\mb R^m\mapsto \mb R_+$ and a signed measure $Q$, 
\begin{align}
\label{eq:transform}
\int_{\mb R^m} f(x)dQ = \int_{0}^\infty Q\l( x: \,f(x)\geq t \r) dt.
\end{align} 
Hence 
\begin{align*}
&
(\Phi^{(n)} - \Phi_{ \Sigma}) \, f_{u,\widehat \mu^{(k)}} = 
(\Phi^{(n)} - \Phi_{ \Sigma}) \, \max\l( f_{u,\widehat \mu^{(k)}},0 \r) - 
(\Phi^{(n)} - \Phi_{ \Sigma}) \, \max\l( f_{-u,\widehat \mu^{(k)}},0\r), 
\end{align*}
where we used the identity $-f_{u,\widehat \mu^{(k)}}=f_{-u,\widehat \mu^{(k)}}$. 
Next, it follows from \eqref{eq:cone} that
\begin{align*}
\l|(\Phi^{(n)} - \Phi_{ \Sigma}) \, \max\l( f_{u,\widehat \mu^{(k)}},0 \r)\r| &= 
\l| \int_0^1 (\Phi^{(n)} - \Phi_{ \Sigma})\l( x: \, f_{u,\widehat \mu^{(k)}}\geq t \r)dt \r| \\
&\leq
\max_{0\leq t\leq 1}\l| (\Phi^{(n)} - \Phi_{ \Sigma}) C_u(t; \widehat \mu^{(k)}) \r| \leq g_{\m S_m}(n)
\end{align*}
by assumption \ref{ass:2}. It implies that
$
\l| (\Phi^{(n)} - \Phi_{ \Sigma}) \, f_{u,\widehat \mu^{(k)}} \r| \leq 2g_{\m S_m}(n),
$ 
as claimed. 

\subsubsection{Proof of Lemma \ref{lemma:1}.}
\label{proof:lemma1}

Using \eqref{eq:transform} and proceeding as in the proof of Lemma \ref{lemma:0}, we obtain that
\[
\l| (\Phi^{(n)}_{k} - \Phi^{(n)}) f_{u,\widehat \mu^{(k)}} \r| \leq 
\max_{0\leq t\leq 1} \l| (\Phi^{(n)}_{k} - \Phi^{(n)}) C_u(t; \widehat \mu^{(k)}) \r| 
\leq \sup_{A\in \m S_m} \l| \Phi^{(n)}_{k}(A) - \Phi^{(n)}(A) \r|. 
\]
It follows from the bounded difference inequality (fact \ref{th:BDI}) that for all $s>0$,
\[
\pr{ \sup_{A\in \m S_m}\l| \Phi^{(n)}_{k}(A) - \Phi^{(n)}(A) \r| -\mb E \sup_{A\in \m S_m}\l| \Phi^{(n)}_{k}(A) - \Phi^{(n)}(A) \r| 
\geq \sqrt{\frac{s}{k}} }\leq e^{-2s},
\]
hence it is enough to control $\mb E \sup_{A\in \m S_m}\l| \Phi^{(n)}_{k}(A) - \Phi^{(n)}(A) \r|$. 
To this end, we will estimate the covering numbers of the class of cones $\m S$ and use Dudley's integral bound (fact \ref{th:DUD}). 

Given a vector $\mf{x}\in \mb R^m$, let $x_1,\ldots,x_m$ be its coordinates with respect to the standard Euclidean basis. 
Note that 
\begin{align*}
&
\dotp{\mf{x} - b}{u}\geq t\| \mf{x} -b\|_2 \iff \dotp{\mf{x}-b}{u}\geq 0 \text { and } \dotp{\mf{x}-b}{u}^2 \geq t^2\|\mf{x} - b\|^2_2, 
\end{align*}
which is equivalent to 
$\sum_{i,j=1}^m \alpha_i \alpha_{i,j} x_i x_j +\sum_{j=1}^m \beta_j x_j + \gamma \geq 0$ and $\dotp{x-b}{u}\geq 0$, 
where $\alpha_{i,j}, \ \beta_j, \ i,j=1,\ldots, m$, and $\gamma$ are functions of $t, \ b_j$ and $u_j$, $j=1,\ldots,m$. 
In particular, every element of $A\in \m S_m$ is the intersection of a half-space $\l\{ \mf{x}: \ \dotp{\mf{x}-b}{u}\geq 0\r\}$ and a set 
$\l\{\mf{x}: \ f(\mf{x})\geq 0 \r\}$, where $f$ is a polynomial of degree $2$ in $m$ variables. 
The dimension of the space $V_{2,m}$ of polynomials of degree at most $2$ is $\dim(V_{2,m}) = {m+2\choose 2} $, hence 
the Vapnik-Chernonenkis dimension of the collection of sets 
$\m S_{V_{2,m}} = \Big\{ \l\{ x: f(x)\geq 0\r\}, \ f\in V_{2,m} \Big\}$ is $\tilde m:={m+2\choose 2}$ by fact \ref{th:VC1}. 
It follows from fact \ref{th:VC2} that for any probability measure $Q$, 
\begin{align}
\label{eq:cov-number}
&
N(\m S_{V_{2,m}}, L_2(Q),\eps) \leq e (\tilde m +1) (4e)^{\tilde m} \l( \frac{1}{\eps^2} \r)^{\tilde m}
\end{align}
for all $0<\eps\leq 1$.
It is also well known that (and can be deduced from the similar reasoning) that the VC-dimension of a collection $\m S_L$ of halfspaces of $\mb R^m$ is $m+1$, hence 
\[
N(\m S_L, L_2(Q),\eps) \leq e (m + 2) (4e)^{m+1} \l( \frac{1}{\eps^2} \r)^{m+1}.
\] 
Given two collections of sets $\m C_1, \ \m C_2$, let $A^{(1)}_1,\ldots,A^{(1)}_{N(\m C_1,L_2(Q),\eps)}$ and $A^{(2)}_1,\ldots, A^{(2)}_{N(\m C_2,L_2(Q),\eps)}$ be the $L_2(Q)$ $\eps$ - nets of smallest cardinality for the classes of functions $\l\{ I_{A}: \ A\in \m C_1\r\}$ and $\l\{ I_A: \ A\in \m C_2 \r\}$ respectively. 
Let $A'\in \m C_1, \ A''\in \m C_2$, and assume without loss of generality that $\|A' - A^{(1)}_1\|_{L_2(Q)}\leq \eps$ and $\|A'' - A^{(2)}_1\|_{L_2(Q)}\leq \eps$. Then
\begin{align*}
&
\l\| I_{A'} I_{A''} - I_{A^{(1)}_1} I_{A^{(2)}_2}\r\|_{L_2(Q)}\leq 2\eps, 
\end{align*}
which implies that the covering number of the class $\m D = \l\{  I_{A_1} I_{A_2}, \ A_1\in \m C_1, \ A_2\in \m C_2\r\}$ 
corresponding to intersections of elements of $\m C_1$ and $\m C_2$ satisfies 
\[
N\l( \m D,L_2(Q),\eps \r)\leq N(\m C_1,L_2(Q),\eps/2) N(\m C_2,L_2(Q),\eps/2).
\]
In particular, the metric entropy of the class of cones $\m S_m$ can be bounded as
\[
\log N\l( \m S_m,L_2(Q),\eps \r)\leq 2\l( {m+2\choose 2} + m + 1 \r)\log \frac{4e^{3/2}}{\eps}
\]
uniformly over all probability measures $Q$, hence fact \ref{th:DUD} implies that
\begin{align*}
\mb E \sup_{A\in \m S_m}&\l| \Phi^{(n)}_k(A) - \Phi^{(n)} (A) \r| \leq \frac{12}{\sqrt k}\mb E\l[ \int_0^1 \sqrt{\log N\l( \m S_m,L_2\l(\Phi^{(n)}_k\r),\eps \r)}d\eps \r]  \\
&
\leq \frac{12}{\sqrt k}\mb E\l[ \sqrt{\int_0^1 \log N\l( \m S_m,L_2\l(\Phi^{(n)}_k\r),\eps \r)d\eps}\r]  \leq \frac{12(m+4)}{\sqrt k}\sqrt{\log 4e^{5/2}}.
\end{align*}

\subsubsection{Proof of Lemma \ref{lemma:2}.}
\label{proof:lemma2}

Making the change of variables $x=\Sigma^{1/2}z$, we obtain
\begin{align*}
\int_{\mb R^m} \dotp{\frac{x-\widehat \mu^{(k)}}{\| x-\widehat \mu^{(k)}\|_2}}{u}d\Phi_{\Sigma}(x) & = 
\int_{\mb R^m}\dotp{\frac{ \Sigma^{1/2}( z -  \Sigma^{-1/2}\widehat \mu^{(k)}) }{\l\|  \Sigma^{1/2}( z -  \Sigma^{-1/2} \widehat \mu^{(k)}) \r\|_2}}{u}d\Phi(z)
\\
& 
\geq
\l\|  \Sigma^{1/2} u \r\|_2 \int_{\mb R^m}\dotp{\frac{z - \Sigma^{-1/2}\widehat \mu^{(k)} }{\l\| z - \Sigma^{-1/2} \widehat \mu^{(k)} \r\|_2}}{ \tilde u}d\Phi(z),
\end{align*}
where $\tilde u=\frac{ \Sigma^{1/2} u}{\| \Sigma^{1/2} u \|_2}$. 
Let $\kappa:=\l\| \Sigma^{-1/2}\widehat \mu^{(k)} \r\|_2$, and note that $\kappa\geq \l\| \widehat \mu^{(k)} \r\|_2$ since $\| \Sigma \|\leq 1$ by assumption. 
Let $V$ be any orthogonal transformation that maps 
$\Sigma^{-1/2}\widehat \mu^{(k)}$ to $\kappa e_1$ (here, $e_1,\ldots,e_m$ is the standard Euclidean basis of $\mb R^m$). 
Then, letting $y=V (z - \Sigma^{-1/2}\widehat \mu^{(k)})$, we observe that 
\[
\int_{\mb R^m} \dotp{\frac{x-\widehat \mu^{(k)}}{\| x-\widehat \mu^{(k)}\|_2}}{u}d\Phi_{\Sigma}(x) \geq
\l\| \Sigma^{1/2} u \r\|_2 \int_{\mb R^m} \dotp{\frac{y}{\|y\|_2}}{V\tilde u}d\Phi( y +\kappa e_1).
\]
Setting $u = -\frac{\Sigma^{-1}\widehat \mu^{(k)}}{\| \Sigma^{-1}\widehat \mu^{(k)}\|_2}$, we obtain from the last inequality that 
\begin{align*}
\int_{\mb R^m} \dotp{\frac{x-\widehat \mu^{(k)}}{\| x-\widehat \mu^{(k)}\|_2}}{u}d\Phi_{\Sigma}(x) &\geq
\frac{1}{\l\| \Sigma^{-1/2}\r\|} 
\int_{\mb R^m} \dotp{\frac{y}{\|y\|_2}}{-e_1}d\Phi( y +\kappa e_1). 
\end{align*}
Set $y=(-t,z)$, where $t\in \mb R$ and $z\in \mb R^{m-1}$. 
We will also let $\phi_k$ denote the density (with respect to Lebesgue measure) of the standard normal distribution on $\mb R^k$. 
Then
\begin{align*}
&
\int_{\mb R^m} \dotp{\frac{y}{\|y\|_2}}{-e_1}d\Phi( y +\kappa e_1) = 
\int_{\R^{m-1}} \int_{-\infty}^\infty \frac{t}{\sqrt{t^2+\| z\|_2^2}} \phi_1(t-\kappa)\phi_{m-1}(z)\:dt\:dz.
\end{align*}

Setting $h(t,z) = t/\sqrt{t^2+\Vert z\Vert_2^2}$, we have that
\begin{align}
\label{eq:b05}
\nonumber
\int_{-\infty}^{\infty} h(t,z)\phi_1(t-\kappa)\:dt &
= \int_{-\infty}^0 h(t,z)\phi_1(t-\kappa)\:dt + \int_0^{\infty} h(t,z)\phi_1(t-\kappa)\:dt\\
\nonumber
&= \int_{\infty}^0 h(t,z)\phi_1(-t-\kappa)\:dt + \int_0^{\infty} h(t,z)\phi_1(t-\kappa)\:dt\\
\nonumber
&= \int_{\infty}^0 h(t,z)\phi_1(t+\kappa)\:dt + \int_0^{\infty} h(t,z)\phi_1(t-\kappa)\:dt\\
&= \int_0^\infty h(t,z)\left[\phi_1(t-\kappa)-\phi_1(t+\kappa)\right]\:dt.
\end{align}
Now, for any $t\geq 0$, 
\begin{align*}
\phi_1(t-\kappa)-\phi_1(t+\kappa) &= \frac{e^{-(t^2+\kappa^2)/2}}{\sqrt{2\pi}}\left(e^{t\kappa}-e^{-t\kappa}\right)\\
&= \frac{e^{-(t^2+\kappa^2)/2}}{\sqrt{2\pi}}\tanh(t\kappa)\left(e^{t\kappa}+e^{-t\kappa}\right)\\
&\geq \frac{e^{-(t^2+\kappa^2)/2}}{\sqrt{2\pi}}\tanh(t\kappa)e^{t\kappa}=\tanh(t\kappa) \phi_1(t-\kappa),
\end{align*}
hence 
\begin{align*}
\int_{\mb R^m} \dotp{\frac{y}{\|y\|_2}}{-e_1}d\Phi( y +\kappa e_1) & \geq 
\int_{\mb R^{m-1}}\int_0^\infty h(t,z) \tanh(t\kappa)\phi_1(t-\kappa)\phi_{m-1}(z)\:dt\:dz \\
& \geq
\int\limits_{\|z\|_2\leq R}\int_1^\infty h(t,z)\tanh(t\kappa)\phi_1(t-\kappa)\phi_{m-1}(z)\:dt\:dz \\
& \geq 
\frac{\tanh(\kappa)}{\sqrt{1+R^2}}\int\limits_{\|z\|_2\leq R} \phi_{m-1}(z)dz\int_1^\infty \phi_1(t-\kappa)dt \\
&\geq
\frac{0.15\tanh(\kappa)}{\sqrt{1+R^2}} \int\limits_{\|z\|_2\leq R}\phi_{m-1}(z)dz,
\end{align*}
where we have use the inequality $h(t,z)\geq (1+R^2)^{-1/2}$ whenever $\|z\|^2_2\leq R$ and $t\geq 1$, and 
$1 - \Phi(1) > 0.15$. 
Finally, a well-known bound states that if $Y$ has $\chi_{m-1}^2$ distribution, then for all $t>0$
\[
\pr{\frac{Y}{m-1} - 1>t }\leq e^{-(m-1)t^2/8}.
\]
For $R^2:=m - 1 + 2\sqrt{(m-1)\ln 4}$, it implies that 
\[
\int\limits_{\|z\|_2\leq R}\phi_{m-1}(z)dz = \pr{ Y \leq R^2} = 
\pr{\frac{Y}{m-1} - 1 \leq 2\sqrt{\frac{\log 4}{m-1}}}
\geq 1/2,
\] 
which concludes the proof.

\subsection{Proof of Theorem \ref{thm:lowlip}.}
\label{proof:lowlip}

To simplify notation in what follows, we let $z^\ast=\argmin_{z\in\mb{R}^d} F(z)$. We let $f_i(z) = \Vert z - x_i\Vert$ for all $i=1,\ldots, k$ and observe that a weak gradient of $f_i(z)$ is given by
\[
\nabla f_i(z) = \left\{\begin{array}{cl}
				0 & z=0\\
				\frac{1}{\Vert z - x_i\Vert}(z-x_i) & z\not=0
				\end{array}\right..
\] 
Hence, $\nabla F(z) = \sum_{i=1}^k \nabla f_i(z)$ is a weak gradient of $F$. 

Now, fix $z\in\mb{R}^d$ with $z\not=z^\ast$, let $r=\Vert z-z^\ast\Vert$, and set $u=\frac{1}{r}(z-z^\ast)$. The second fundamental theorem of calculus yields
\begin{align*}
F(z) - F(z^\ast) &= \int_0^r \nabla F(z^\ast + t u)^T u dt\\
&=\int_0^r \sum_{i=1}^k \frac{1}{\|z^\ast-x_i+ t u\|}(z^\ast-x_i+tu)^Tu dt\\
&=\int_0^r \sum_{i=1}^k \frac{1}{\|z^\ast-x_i+ t u\|}(z^\ast-x_i+tu)^Tu dt\\
&=\int_0^r \sum_{i=1}^k \frac{(z^\ast-x_i)^Tu + t}{\sqrt{\|z^\ast-x_i\|^2+2t(z^\ast-x_i)^Tu + t^2}}dt\\
&=\int_0^r \sum_{i=1}^k \frac{\gamma_i c_i + t}{\sqrt{(\gamma_ic_i + t)^2+\gamma_i^2(c_i^2-1)}}dt.
\end{align*}
In this last line, we have set $\gamma_i = \|z^\ast - x_i\|$ and $c_i= \frac{1}{\gamma_i}(z^\ast-x_i)^T u$. By Cauchy-Schwarz, we have that $c_i^2\leq 1$. If $c_i^2=1$, then 
\[
\frac{\gamma_i c_i + t}{\sqrt{(\gamma_i c_i +t)^2 + \gamma_i^2(1-c_i^2)}}= \text{sgn}(\gamma_i c_i+t)\geq c_i
\]
for all $t\geq 0$. If $c_i^2<1$, then we have that
\[
\frac{\gamma_i c_i + t}{\sqrt{(\gamma_i c_i +t)^2 + \gamma_i^2(1-c_i^2)}}=c_i+\int_0^t \frac{\gamma_i^2(1-c_i^2)}{\left[(\gamma_ic_i+s)^2+\gamma_i^2(1-c_i^2)\right]^{3/2}}ds.
\]
Note that $\sum_{i=1}^k c_i = \nabla F(z^\ast)^T u=0$ since $z^\ast$ is the minimizer. Consequently, we have
\begin{align*}
F(z) - F(z^\ast) &\geq \int_0^r \left(\sum_{i=1}^k c_i + \sum_{i:c_i^2<1} \int_0^t \frac{\gamma_i^2(1-c_i^2)}{\left[(\gamma_ic_i+s)^2+\gamma_i^2(1-c_i^2)\right]^{3/2}}ds\right)dt\\
&=\sum_{i:c_i^2<1} \int_0^r\int_0^t \frac{\gamma_i^2(1-c_i^2)}{\left[(\gamma_ic_i+s)^2+\gamma_i^2(1-c_i^2)\right]^{3/2}}ds\:dt\\
&=\sum_{i:c_i^2<1} \int_0^r\int_0^t \frac{1-c_i^2}{\gamma_i}\frac{1}{\left[(c_i+\frac{s}{\gamma_i})^2+(1-c_i^2)\right]^{3/2}}ds\:dt.
\end{align*}
Given that 
\[
\left(c_i+\frac{s}{\gamma_i}\right)^2+(1-c_i^2)=\frac{s^2}{\gamma_i^2}+2c_i\frac{s}{\gamma_i}+1\leq \frac{s^2}{\gamma_i^2}+2\frac{s}{\gamma_i}+1=\left(1+\frac{s}{\gamma_i}\right)^2,
\]
we obtain the lower bound
\begin{align*}
F(z) - F(z^\ast)&\geq\sum_{i:c_i^2<1} \int_0^r\int_0^t \frac{1-c_i^2}{\gamma_i}\frac{1}{\left[(\frac{s}{\gamma_i}+1)^2\right]^{3/2}}ds\:dt\\
&=\sum_{i:c_i^2<1} \int_0^r\int_0^t \frac{1-c_i^2}{\gamma_i}\frac{1}{(\frac{s}{\gamma_i}+1)^3}ds\:dt\\
&=\sum_{i:c_i^2<1} \int_0^r\int_0^t \frac{\gamma_i^2(1-c_i^2)}{(s+\gamma_i)^3}ds\:dt\\
&=\left(\sum_{j=1}^k \gamma_j^2(1-c_j^2)\right) \int_0^r\int_0^t \sum_{i=1}^k\frac{\gamma_i^2(1-c_i^2)}{\sum_{j=1}^k \gamma_j^2(1-c_j^2)}\frac{1}{(s+\gamma_i)^3}ds\:dt.
\end{align*}
Noting that the inverse cubic function is convex, Jensen's inequality and straightforward integration yields
\begin{align*}
F(z) - F(z^\ast) &\geq\left(\sum_{j=1}^k \gamma_j^2(1-c_j^2)\right)\int_0^r\int_0^t \frac{1}{\left(s+\frac{\sum_{i=1}^k \gamma_i^3(1-c_i^2)}{\sum_{j=1}^k \gamma_j^2(1-c_j^2)}\right)^3}ds\:dt\\
&=\frac{1}{2}\left(\sum_{j=1}^k \gamma_j^2(1-c_j^2)\right) \frac{r^2}{\left(\frac{\sum_{i=1}^k \gamma_i^3(1-c_i^2)}{\sum_{j=1}^k \gamma_j^2(1-c_j^2)}\right)^2\left(r+\frac{\sum_{i=1}^k \gamma_i^3(1-c_i^2)}{\sum_{j=1}^k \gamma_j^2(1-c_j^2)}\right)}.
\end{align*} 

We now observe that
\[
\sum_{i=1}^k \gamma_i^3(1-c_i^2)\leq \sum_{i=1}^k\|z^\ast - x_i\|^3\leq \sum_{i=1}^k \left(\|z^\ast - \overline{x}\|+\|\overline{x}-x_i\|\right)^3\leq  \sum_{i=1}^k \left(\frac{2}{k} F(\overline{x})+\|\overline{x}-x_i\|\right)^3
\]
and also that
\[
\sum_{i=1}^k \gamma_i^2(1-c_i^2) = \sum_{i=1}^k \|z^\ast-x_i\|^2-\left((z^\ast-x_i)^Tu\right)^2=\sum_{i=1}^k\sum_{j=2}^d u_j^T(z^\ast-x_i)(z^\ast-x_i)^T u_j
\]
where $\{u, u_2, \ldots, u_d\}$ is an orthonormal basis for $\mb{R}^d$. We further observe that
\begin{align*}
\sum_{i=1}^k (z^\ast-x_i)(z^\ast-x_i)^T &= \sum_{i=1}^k (z^\ast -\overline{x}+\overline{x}-x_i)(z^\ast-\overline{x}+\overline{x}-x_i)^T\\
&=k(z^\ast-\overline{x})(z^\ast-\overline{x})^T +\sum_{i=1}^k(x_i-\overline{x})(x_i-\overline{x})^T.
\end{align*}
The Courant-Fischer characterization of eigenvalues gives us
\[
\sum_{i=1}^k \gamma_i^2(1-c_i^2)\geq \sum_{j=2}^d u_j^T\left( \sum_{i=1}^k(x_i-\overline{x})(x_i-\overline{x})^T\right)u_j\geq k\sum_{j=2}^d\lambda_j(\widehat{\Sigma})
\]
where $\{\lambda_j(\widehat{\Sigma})\}_{j=1}^d$ are the eigenvalues of the empirical covariance matrix listed with multiplicity and in non-increasing order. We therefore have
\[
\frac{1}{k}(F(z)-F(z^\ast))\geq \frac{1}{2} \frac{\sum_{j=2}^d \lambda_j(\widehat{\Sigma}) r}{\left(\frac{\frac{1}{k}\sum_{i=1}^k \left(2 m_1+\|x_i-\overline{x}\|\right)^3}{\sum_{j=2}^d \lambda_j(\widehat{\Sigma})}\right)^2\left(r+\frac{\frac{1}{k}\sum_{i=1}^k \left(2 m_1+\|x_i-\overline{x}\|\right)^3}{\sum_{j=2}^d \lambda_j(\widehat{\Sigma})}\right)},
\]
and the result follows.

\end{document}